\newtheorem{thm}{\bf Theorem}
\newtheorem{lemma}{\bf Lemma}
\newtheorem{proposition}{\bf Proposition}
\newtheorem{example}{\bf Example}
\newenvironment{sequation*}{\small
\begin{equation*}}{\end{equation*}}
\begin{document}

\begin{frontmatter}

\title{Estimation and Model Identification of Locally Stationary Varying-Coefficient Additive Models}

\author{Lixia Hu, Tao Huang, Jinhong You}
\address{Shanghai University of Finance and Economics}




\begin{abstract}
Nonparametric regression models with locally stationary covariates have received increasing interest
in recent years. As a nice relief of  ``curse of dimensionality'' induced by large dimension of covariates, additive regression model is  commonly used. However, in locally stationary context, to catch the dynamic nature of regression function, we adopt a flexible varying-coefficient additive model where the regression function has the form
$\alpha_{0}\left(u\right)+\sum_{k=1}^{p}\alpha_{k}\left(u\right)\beta_{k}\left(x_{k}\right).$
For this model, we propose a three-step spline estimation method for each univariate nonparametric function,
and show its consistency and $L_{2}$ rate of convergence.
Furthermore, based upon the three-step estimators, we develop a two-stage penalty procedure to identify pure additive terms
and varying-coefficient terms in varying-coefficient additive model.
As expected, we demonstrate that the proposed identification procedure is consistent, and the penalized estimators achieve the same $L_{2}$ rate of convergence as the polynomial spline estimators.
Simulation studies are presented to illustrate the finite sample performance of the proposed three-step spline estimation method and two-stage model selection procedure.

\end{abstract}

\begin{keyword}
Locally stationary process,  varying-coefficient additive regression model, B-spline,   SCAD, penalized least squares
\end{keyword}

\end{frontmatter}


\section{Introduction}
Modelling nonparametric time series has received increasing interest among scholars for a few decades,
see, for example, \cite{cai2000functional,fan1998direct,fan2002,hastie1990generalized,kim1999}.
In classical time series analysis, the stationarity of time series is a fundamental assumption.
Yet,  it may be violated on some occasions in such the fields as finance, sound analysis and neuroscience,
especially when the time span of observations tends to infinity.
So, it is necessary to generalize the stationary process to the nonstationary process.
Priestley (1965) \cite{Priestley1965} first introduced a stochastic process with evolutionary spectra,
which locally displays an approximately stationary behavior.
But in his framework, it is impossible to establish an asymptotic statistical inference.
Dahlhaus (1997) proposed a new generalization  of stationarity, called locally stationary process,
and investigated its statistical inference. More details can refer to
\cite{dahlhaus1996asymptotic,dahlhaus1996kullback,dahlhaus1997fitting}.
In essence, the  locally stationary process is  locally close  to a stationary process over short periods of time,
but its second order characteristic is gradually changing as time evolves.
A formal description of locally stationary process can refer to Assumption (A1) in the Appendix.
In parametric context, the statistical inference of locally stationary process has been studied extensively by
\cite{dahlhaus1999nonlinear,dahlhaus2006statistical,fryzlewicz2008normalized,hafner2010efficient,koo2010semiparametric}.
In nonparametric context, Vogt (2012) \cite{vogt2012nonparametric} considered the time-varying nonlinear autoregressive (tvNAR) models
including its general form and estimated the time-varying multivariate regression function  using the kernel-type method.
 However,  it still suffers the ``curse of dimensionality'' problem when the dimension of covariates is high.

In order to solve the aforementioned problem,  a familiar way is to adopt the additive nonparametric regression model
suggested by \cite{hastie1990generalized}.
It not only remedies  the  ``curse of dimensionality'', but also has an independent interest  in  practical applications
due to its flexibility and interpretability.
There exists abound research findings about the additive regression model in the literature.
In the case of iid observations, the additive nonparametric component functions can be estimated by kernel-based methods:
the classic backfitting estimators of \cite{hastie1990generalized}, the marginal integration estimators of
\cite{fan1998direct,linton1995kernel}, the smoothing backfitting estimators of \cite{mammen1999existence}, and two-stage estimators of \cite{linton1997efficient,horowitz2006optimal}.
In the stationary time series context, there are  kernel estimators via marginal integration of \cite{tjostheim1994nonparametric,yang1999nonparametric}, spline estimators of \cite{huang1998projection,huang2004identification,stone1985,stone1994,xue2006estimation},
and the spline-backfitted kernel (SBK) estimators which borrow the strength of both kernel estimation
and spline estimation, see \cite{liu2013oracle,wang2007spline,wang2009efficient}.
Vogt \cite{vogt2012nonparametric} considered the  locally stationary additive model and proposed smooth backfitting method
 to estimate bivariate additive component functions.

On the other hand, the varying-coefficient model is a natural extension of linear model which allows the coefficients to change over certain common covariates instead of being invariant.
This model succeeds to relax the parameter limitation of linear model and may have practical as well as theoretical significance.
For this model, there are three types of estimation methods: local polynomial smoothing method \cite{fan1999,wu1998},
polynomial spline estimation method \cite{huang2002,huang2004functional,huang2004polynomial} and smoothing spline method
\cite{chiang2001,hastie1993vary,hoover1998}.

Zhang and Wang \cite{zhang2015functional} proposed a so-called varying-coefficient additive model
 to catch the evolutionary nature of time-varying regression function in the analysis of functional data.
Their model assumes the evolutionary regression function has the form
$m\left(t,\mathbf{x}\right)=\alpha_{0}\left(t\right)+\sum_{k=1}^{p}\alpha_{k}\left(t\right)\beta_{k}\left(x_{k}\right),$
which is more flexible in the sense that it covers both varying-coefficient model and additive model as special cases.
Specifically speaking, it reduces to an additive model when $\alpha_{k},k=1,\cdots,p,$ are all constants, and a varying-coefficient
model if $\beta_{k},k=1,\cdots,p,$ are all linear functions. Extracting the special meaning of time in functional data analysis,
one can generalize time to some other common covariates.

In this paper,  we model locally stationary time series. To  concreteness,
let $\big\{Y_{t,T},X_{t,T}^{\left(1\right)},\cdots,X_{t,T}^{\left(p\right)}\big\}_{t=1}^{T}$
 be a length-$T$ realization of $p+1$ dimension locally stationary time series, and  assume that the data is  generated by
 varying-coefficient additive model as follows
\begin{equation}\label{regression}
Y_{t,T}=\alpha_{0}\left(\frac{t}{T}\right)+\sum_{k=1}^{p}\alpha_{k}\left(\frac{t}{T}\right)\beta_{k}\left(X_{t,T}^{\left(k\right)}\right)
+\sigma\left(\frac{t}{T},\mathbf{X}_{t,T}\right)\varepsilon_{t},\ \ t=1,\cdots,T,
\end{equation}
where $\varepsilon_t$'s are i.i.d,
$\alpha_{k}$ is the varying-coefficient component function, $\beta_{k}$ is the additive component  function
and $\sigma$ is a bivariate nonparametric function, which allows the heteroscedasticity case.
Without loss of generality, we require
$\textstyle{\mathbf{X}_{t,T}=\big\{X_{t,T}^{\left(1\right)},\cdots,X_{t,T}^{\left(p\right)}\big\}^{\tau}\in[0,1]^{p}},$
where the superscript `$\tau$' means transposition of vector or matrix.
In order to identify these multiplied component functions,  we require that
\[\parallel \alpha_{1}\parallel_{L_{2}}=\cdots=\parallel\alpha_{p}\parallel_{L_{2}}=1\ \text{and}\  \beta_{1}\left(0\right)=\cdots\beta_{p}\left(0\right)=0,\]
where $\parallel f\parallel_{L_{2}}=\{\int_{0}^{1}f^{2}\left(z\right)\mathrm{d}z\}^{1/2}$ is the $L_{2}$ norm of any function $f$ defined on $[0,1]$
such that $\int_{0}^{1}f^{2}\left(z\right)\mathrm{d}z<\infty.$

For functional data, Zhang and Wang \cite{zhang2015functional} proposed a two-step spline estimation procedure.
In the first step, sorting the data within each subject in ascending order of time
and averaging the response for each subject using trapezoidal rule to fit an additive model, then, in the second step,
 fitting a varying-coefficient model by substituting the estimated additive function into varying-coefficient additive model.
His estimation methodology  works since there are dense observation for every subject and covariates is independent of observation time within subject.

However, for some other practical problems, such as longitudinal data with finite observertion time, time series data, such an assumption fails.
To circumvent  this problem, under mild assumptions, we derive an initial estimation of additive component by employing a   segmentation technique.
Then we can fit a varying-coefficient model and an additive model, respectively, to get spline estimators of varying-coefficient function and
additive function.
As expected, we show that the proposed estimators of $\alpha_{k}$ and $\beta_{k}$  are consistent
 and present the corresponding $L_{2}$  rate of convergence.

On the other hand, the product term in \eqref{regression}  may simply reduce to a varying-coefficient term  or an
additive term in the case of $\beta_{k}$ being linear function or $\alpha_{k}$ being constant.
So, in the parsimony sense, identifying additive terms and varying-coefficient terms in \eqref{regression} are of interest.
To this end, we propose a two-stage penalized least squares estimator based on SCAD penalty function, and, furthermore,  show that our model identification strategy is consistent, i.e., the  additive term and the varying-coefficient  term
are correctly selected with probability approaching to 1. Meantime, $L_{2}$ rate of convergence of penalized spline estimator of each component function achieves the rate of the spline estimator of univariate nonparametric function.

The rest of this paper is organized as follows. We propose a three-step spline estimation method in Section 2 and
 a two-stage model identification procedure in Section 3.
 Section 4 describes the smoothing parameter selection strategies.
 Section 5 establishes the asymptotic properties of the proposed model estimation and identification methods.
Simulation studies   are illustrated in Section 6.
The main technical proofs are presented in the Appendix.
Lemmas and other similar proofs are given in the Supplementary.

\section{Model Estimation}

In this section, we propose a three-step spline estimation method for the proposed locally stationary varying-coefficient additive model \eqref{regression}.

\begin{itemize}
\item{Step I}:  Segment the rescaled time $\left(\frac{1}{T},\frac{2}{T},\cdots,1\right)$ into several groups, and approximate each varying-coefficient function $\alpha_{k}$ within the same group by a local constant.  Thus, model \eqref{regression} can be approximated by an additive model, and a scaled-version of the additive component functions $\beta_{k}$ can be obtained using spline-based method.
\item{Step II}: Substitute the initial estimates of the scaled additive component functions into model \eqref{regression} to yield an approximated varying-coefficient model, and then obtain spline estimators of varying-coefficient component functions $\alpha_{k}.$
\item{Step III}: Plug-in spline estimators of varying-coefficient component functions $\alpha_{k}$ into model \eqref{regression} to yield an approximated additive model, and then update the spline estimation of additive component functions $\beta_{k}$.
\end{itemize}

We first present with some notations before detailing our proposed estimation method.
Let $\{B_{kl,A},l=1,\cdots,J_{k,A}\}$ be $p_{k}$ order B-spline basis with $K_{k,A}$ interior knots
and $J_{k,A}=K_{k,A}+p_{k}$ is the number of B-spline functions estimating additive component function $\beta_{k}.$
Similarly, we denote  $\{B_{kl,C},l=1,\cdots,J_{k,C}\}$ as $q_{k}$ order B-spline basis with $K_{k,C}$ interior knots,
and $J_{k,C}=K_{k,C}+q_{k}$  is the number of B-spline functions estimating varying-coefficient component function $\alpha_{k}.$
Here,  `$A$' and `$C$' in the subscript of B-spline functions and knots number mean that is for the additive component function
and varying-coefficient function, respectively.
Denote $\psi_{kl}=J_{k,A}^{1/2}B_{kl,A}$ and  $\varphi_{kl}=J_{k,C}^{1/2}B_{kl,C}.$  The nice properties of scaled B-spline basis
are listed in the Appendix. \\

{\bf {Step I: Initial estimators of scaled additive component functions}}

We segment the sample $\{\left(Y_{t,T},\mathbf{X}_{t,T}\right)^{\mathbf{\tau}},t=1,\cdots,T\}$ in ascending order of time into $N_{T}$ groups with $I_{T}$ observations in each group, where $\textstyle{I_{T}}$ hinges on the sample size $T$ and $I_{T}\times N_{T}=T$.
Then approximate  $\alpha_{k}(u)$ in the $s$th group, i.e. $\left(s-1\right)I_{T}+1 \leq uT \leq sI_{T},$ by a constant $C_{ks}$,  where $C_{ks}$ is some constant dependent on $k$ and $s$ such that $\sum_{s=1}^{N_{T}}C_{ks}/N_{T}<\infty.$

For the sake of convenient presentation, we suppress the triangular array index in locally stationary time series,
and represent time index in the $s$th  group  as $t_{s1},\cdots,t_{sI_{T}}$ for given $s=1,\cdots N_{T}.$
 Then one can approximate model \eqref{regression} as
\begin{equation}\label{subregression1}
Y_{t_{sj},T}\approx C_{0s}+\sum_{k=1}^p C_{ks}\beta_{k}(X_{t_{sj},T}^{\left(k\right)})
+\sigma_{t_{sj}}\varepsilon_{t_{sj}},\ s=1,\cdots,N_{T}, \ j=1,\cdots,I_{T}.
\end{equation}
where $\sigma_{t_{sj}}=\sigma\left(t_{sj}/T,\mathbf{X}_{t_{sj},T}\right).$
If $C_{ks},k=0,\cdots,p,s=1,\cdots,N_{T}$ are all known, one can easily construct the spline estimator of $\beta_{k}.$
Suppose that $(\hat{C}_{0s},\hat{h}_{kl},l=1,\cdots,J_{k,A},k=1,\cdots,p,s=1,\cdots,N_{T})^{\tau}$ minimizes
\begin{equation}\label{minimize2}
\sum_{s=1}^{N_{T}}\sum_{j=1}^{I_{T}}\big[Y_{t_{sj},T}-C_{0s}
-\sum_{k=1}^{p}\sum_{l=1}^{J_{k,A}}C_{ks}h_{kl}\psi_{kl}(X_{t_{sj},T}^{\left(k\right)})\big]^2,
\end{equation}
then $\hat{\beta}_{k}\left(x_{k}\right)=\sum_{l=1}^{J_{k,A}}\hat{h}_{kl}\bar{\psi}_{kl}\left(x_{k}\right),$
where $\bar{\psi}_{kl}\left(\cdot\right)=\psi_{kl}\left(\cdot\right)-\psi_{kl}\left(0\right).$

However, $C_{ks}$'s are unknown. We instead rewrite \eqref{subregression1} as an additive model,
\begin{equation}\label{subregression2}
Y_{t_{sj},T}\approx C_{0s}+\sum_{k=1}^p\beta_{k}^{\left(s\right)}(X_{t_{sj},T}^{\left(k\right)})
+\sigma_{t_{sj}}\varepsilon_{t_{sj}},\ s=1,\cdots,N_{T},\ j=1,\cdots,I_{T},
\end{equation}
where $\beta_{k}^{\left(s\right)}\left(\cdot\right)=C_{ks}\beta_{k}\left(\cdot\right).$
For each given $s,$ let $(\tilde{C}_{0s},\hat{h}_{kl}^{\left(s\right)},l=1,\cdots,J_{k,A},k=1,\cdots,p)^{\tau}$
minimize
\begin{equation}\label{minimize1}
\sum_{t=1}^{I_{T}}\big[Y_{t_{sj},T}-C_{0s}-\sum_{k=1}^{p}\sum_{l=1}^{J_{k,A}}
h_{kl}^{\left(s\right)}\psi_{kl}(X_{t_{sj},T}^{\left(k\right)})\big]^2.
\end{equation}
By \eqref{minimize2} and \eqref{minimize1}, it is easy to see that
\begin{equation*}\label{compare}
\hat{C}_{0s}=\tilde{C}_{0s}, \ \ \hat{h}_{kl}^{\left(s\right)}=C_{ks}\hat{h}_{kl},\ l=1,\cdots,J_{k,A},\ k=1,\cdots,p,\ s=1,\cdots,N_{T},
\end{equation*}
which implies
\begin{eqnarray}\label{relationsol}
\frac{1}{N_{T}}\sum_{s=1}^{N_{T}}C_{ks}\hat{\beta}_{k}\left(x_{k}\right)&=&
\frac{1}{N_{T}}\sum_{s=1}^{N_{T}}C_{ks}\sum_{l=1}^{J_{k,A}}\hat{h}_{kl}\bar{\psi}_{kl}\left(x_{k}\right) \\
&=&\frac{1}{N_{T}}\sum_{s=1}^{N_{T}}\sum_{l=1}^{J_{k,A}}\hat{h}_{kl}^{\left(s\right)}\bar{\psi}_{kl}\left(x_{k}\right)
\equiv \hat{\gamma}_{k}\left(x_{k} \right). \nonumber
\end{eqnarray}
In a word, although the additive component function in \eqref{subregression1} cannot be estimated directly,
the scaled additive component function
$ \gamma_{k}\left(\cdot\right)=w_{k}\beta_{k}\left(\cdot\right)$
with $w_{k}=\sum_{s=1}^{N_{T}}C_{ks}/N_T$
is estimable using the proposed segmentation techniques. \\

{\bf {Step II: Spline estimators of varying-coefficient component functions}}

Define $\delta_{0}\left(\cdot\right)=\alpha_{0}\left(\cdot\right)$ and $\delta_{k}\left(\cdot\right)=\alpha_{k}\left(\cdot\right)/w_{k}$, $k=1,\cdots,p$.
By \eqref{relationsol},  substituting $\hat{\gamma}_{k}\left(\cdot\right)$ into \eqref{regression} yields
\begin{equation}\label{stageI}
Y_{t,T}=\delta_{0}\left(t/T\right)+\sum_{k=1}^{p}\delta_{k}\left(t/T\right)\hat{\gamma}_{k}(X_{t,T}^{\left(k\right)})+e_{t,T},\ \ t=1,\cdots,T,
\end{equation}
where $e_{t,T}=\sigma(t/T,\mathbf{X}_{t,T})\varepsilon_{t}-\sum_{k=1}^{p}\delta_{k}(t/T)
\big[\hat{\gamma}_{k}(X_{t,T}^{\left(k\right)})-\gamma_{k}(X_{t,T}^{\left(k\right)})\big].$

Model \eqref{stageI} can be viewed as  a varying-coefficient model, and the spline estimators of varying-coefficient functions $\delta_{k}\left(\cdot\right),k=0,\cdots,p$ are easily obtained. Suppose that
$\left(\hat{g}_{kl},l=1,\cdots,J_{k,C},k=0,\cdots,p\right)^{\tau}$ minimizes
\[\sum_{t=1}^{T}\big[Y_{t,T}-\sum_{l=1}^{J_{0,C}}g_{0l}\varphi_{0l}\left(t/T\right)
-\sum_{k=1}^{p}\hat{\gamma}_{k}(X_{t,T}^{\left(k\right)})\sum_{l=1}^{J_{k,C}}g_{kl}\varphi_{kl}\left(t/T\right)\big]^2.\]
Then,
\[\hat{\delta}_{0}\left(\cdot\right)=\sum_{l=1}^{J_{0,C}}\hat{g}_{0l}\varphi_{0l}\left(\cdot\right),\quad
\hat{\delta}_{k}\left(\cdot\right)=\sum_{l=1}^{J_{k,C}}\hat{g}_{kl}\varphi_{kl}\left(\cdot\right),\ k=1,\cdots,p.\]
By the definition of $\delta_{k}\left(\cdot\right)$ and identifiability conditions for $\alpha_{k}\left(\cdot\right),k=1,\cdots,p,$  we have the
spline estimators of varying-coefficient functions $\alpha_{k}\left(\cdot\right)$'s in model \eqref{regression} as
\begin{equation}\label{hatalpha}
\hat{\alpha}_{0}\left(\cdot\right)=\hat{\delta}_{0}\left(\cdot\right),\ \
\hat{\alpha}_{k}\left(\cdot\right)=  \hat{\delta}_{k}\left(\cdot\right)/
\parallel \hat{\delta}_{k}\left(\cdot\right)\parallel,\ \  k=1,\cdots,p.
\end{equation}

{\bf {Step III: Spline estimators of additive component functions}}

Substituting \eqref{hatalpha} into model \eqref{regression} yields
\begin{equation}\label{stageIII}
Y_{t,T}=\hat{\alpha}_{0}\left(t/T\right)+\sum_{k=1}^{p}\hat{\alpha}_{k}\left(t/T\right)\beta_{k}(X_{t,T}^{\left(k\right)})+\eta_{t,T},
\end{equation}
where
$\eta_{t,T}=\sigma(\frac{t}{T},\mathbf{X}_{t,T})\varepsilon_{t}+\alpha_{0}(\frac{t}{T})-\hat{\alpha}_{0}(\frac{t}{T})
+\sum_{k=1}^{p}\big[\alpha_{k}(\frac{t}{T})-\hat{\alpha}_{k}(\frac{t}{T})\big]\beta_{k}(X_{t,T}^{\left(k\right)}).$

Model \eqref{stageIII} can be viewed as  a varying-coefficient model. Suppose that $(\hat{f}_{kl},l=1,\cdots,J_{k,A},k=1,\cdots,p)^{\tau}$ minimizes
\[\sum_{t=1}^{T}\big[Y_{t,T}-\hat{\alpha}_{0}(t/T)
-\sum_{k=1}^{p}\hat{\alpha}_{k}(t/T)\sum_{l=1}^{J_{k,A}}f_{kl}\psi_{kl}(X_{t,T}^{\left(k\right)})\big]^2,\]
Then, spline estimators of additive component functions in \eqref{regression} are given by
$$\hat{\beta}_{k}\left(\cdot\right)=\sum_{l=1}^{J_{k,A}}\hat{f}_{kl}\bar{\psi}_{kl}\left(\cdot\right), k=1,\cdots,p.$$

{\bf Remark 1:}
The spline estimators  $\hat{\alpha}_{k}$ and $\hat{\beta}_{k}$ can be updated by iterating Step II and Step III.
However, one step estimation is enough and there is no great improvement through iteration procedure.

\vskip 2in 

{\bf Remark 2:}
One may employ different B-spline basis functions in Step I  and Step III for estimating the additive component functions ${\beta}_{k}$. Yet, we don't distinguish them in symbols for the sake of simplicity.


\section{Model Identification}
The proposed varying-coefficient additive model is more general and flexible than either varying-coefficient model
or additive model, and covers them as special cases. But, in practice, a parsimonious model is always one's preference when there exist several potential options.  Hence, it is of great interest to explore whether the varying-coefficient component function is truly varying and whether the additive component function degenerates to simply linear function.  In this paper, we decompose varying-coefficient additive terms into additive terms $\beta_{k}$  and varying-coefficient terms $\alpha_{k}$, and, motivated by \cite{fan2001}, propose a two-stage penalized least squares (PLS) model identification procedure to identify the term that $\beta_{k}$ is constant ($\beta_{k}''=0$) or/and $\alpha_{k}$   is linear ($\alpha_{k}'=0$).

\begin{itemize}
\item{Stage I}: Plug-in the spline estimators of additive component functions $\beta_{k}$ obtained in the estimation stage into model \eqref{regression}, and penalize $\parallel\alpha_{k}'\parallel_{L_{2}}$ to identify linear additive terms.
\item{Stage II}: Given the penalized spline estimators  of additive component functions $\beta_{k}$ obtained in Stage I of the model identification process, penalize $\parallel\beta_{k}''\parallel_{L_{2}}$ to select constant varying-coefficient terms.
\end{itemize}

We first introduce some notations. Let  $K_{A}=\max_{1\leq k\leq p }K_{k,A}$
and $K_{C}=\max_{0\leq k\leq p }K_{k,C}.$ Denote
 \begin{equation*}
 \begin{aligned}
 \Phi_{k}(\cdot)=&\{\varphi_{k1}(\cdot),\cdots,\varphi_{kJ_{k,C}}(\cdot)\}^{\tau},&k=0,\cdots,p,\\
\Psi_{k}\left(\cdot\right)=&\{\psi_{k1}\left(\cdot\right),\cdots,\psi_{kJ_{k,A}}\left(\cdot\right)\}^{\tau}, &  k=1,\cdots,p.\\
\end{aligned}
\end{equation*}
and
 \begin{equation*}
 \begin{aligned}
W_{k}=&\Big\{\int_{0}^{1}\Phi'_{kj}\left(u\right)\Phi'_{kj'}\left(u\right)\mathrm{d}u\Big\}_{J_{k,C}\times J_{k,C}},&k=0,\cdots,p,\\
V_{k}=&\Big\{\int_{0}^{1}\Psi''_{kj}\left(u\right)\Psi''_{kj'}\left(u\right)\mathrm{d}u\Big\}_{J_{k,A}\times J_{k,A}},&  k=1,\cdots,p.
\end{aligned}
\end{equation*}

{\bf {Stage I: Identifying linear additive terms}}

By substituting the additive component functions  ${\beta}_{k}(\cdot)$ by their spline estimates $\hat{\beta}_{k}(\cdot)$ obtained in the estimation stage, model
\eqref{regression} becomes
\begin{equation*}
Y_{t,T}=\alpha_{0}\left(t/T\right)+\sum_{k=1}^{p}\alpha_{k}\left(t/T\right)\hat{\beta}_{k}\left(X_{t,T}^{\left(k\right)}\right)
+\tilde{\varepsilon}_{t},
\end{equation*}
where $\tilde{\varepsilon}_{t}= \sigma(t/T,\mathbf{X}_{t,T})\varepsilon_{t}+\sum_{k=1}^{p}\alpha_{k}(t/T)
\big[\beta_{k}(X_{t,T}^{\left(k\right)})-\hat{\beta}_{k}(X_{t,T}^{\left(k\right)})\big].$

Let
$\mathbf{\pi}=\left(\pi_{0}^{\tau},\cdots,\pi_{p}^{\tau}\right)^{\tau}$ with $\pi_{k}=\left(\pi_{k1},\cdots,\pi_{kJ_{k,C}}\right)^{\tau}$, and assume
$\hat{\pi}=\left(\hat{\pi}_{0},\cdots,\hat{\pi}_{p}\right)^{\tau}$ is determined by
\begin{equation}\label{SCAD1}
\begin{aligned}
\hat{\mathbf{\pi}}
=&\operatorname*{argmin}_{\mathbf{\pi}}\frac{1}{2}\sum_{t=1}^{T}\Big[Y_{t,T}-\pi_{0}^{\tau}\Phi_{0}\left(t/T\right)
-\sum_{k=1}^{p}\pi_{k}^{\tau}\Phi_{k}\left(t/T\right)\hat{\beta}_{k}\left(X_{t,T}^{\left(k\right)}\right)\Big]^2\\
&+T\sum_{k=1}^{p}p_{\lambda_{T}}\left(K_{C}^{-3/2}\parallel\alpha'_{k}\parallel_{L_{2}}\right),
\end{aligned}
\end{equation}
where $\parallel\alpha'_{k}\parallel_{L_{2}}^{2}=\parallel\pi_{k}^{\tau}\Phi_{k}'\parallel_{L_{2}}=\pi_{k}^{\tau}W_{k}\pi_{k},$
and $p_{\lambda_{T}}\left(\cdot\right)$ is a penalty function with a tuning parameter $\lambda_{T}.$
Then the penalized spline estimators of $\alpha_{k}\left(\cdot\right)$ are given by
\[\hat{\alpha}_{0}^{P}\left(\cdot\right)=\hat{\pi}_{0}^{\tau}\Phi_{0}\left(\cdot\right),\
\hat{\alpha}_{k}^{P}\left(\cdot\right)=\hat{\pi}_{k}^{\tau}\Phi_{k}\left(\cdot\right)/
\parallel\hat{\pi}_{k}^{\tau}\Phi_{k}\parallel_{L_{2}},\ k=1,\cdots,p.\]
Here the superscript `$P$' denotes the penalized spline estimation.  \\

{\bf {Stage II: Identifying constant varying-coefficient terms}}

By replacing the varying-coefficient function $\alpha_{k}\left(\cdot\right)$ with their penalized spline estimates $\hat{\alpha}_{k}^{P}\left(\cdot\right)$ obtained in Stage I, model \eqref{regression} becomes
\begin{equation*}
Y_{t,T}=\hat{\alpha}_{0}^{P}\left(t/T\right)+\sum_{k=1}^{p}\hat{\alpha}_{k}^{P}\left(t/T\right)
\beta_{k}\left(X_{t,T}^{\left(k\right)}\right)+\tilde{\varepsilon}_{t}^{P},
\end{equation*}
where $\tilde{\varepsilon}_{t}^{P}= \sigma\left(t/T,\mathbf{X}_{t,T}\right)\varepsilon_{t}
+\alpha_{0}\left(t/T\right)-\hat{\alpha}_{0}^{P}\left(t/T\right)+\sum_{k=1}^{p}
\big[\alpha_{k}\left(t/T\right)-\hat{\alpha}_{k}^{P}\left(t/T\right)\big]\beta_{k}\left(X_{t,T}^{\left(k\right)}\right).$

Let
$\mathbf{\varpi}=\left(\varpi_{1}^{\tau},\cdots,\varpi^{\tau}_{p}\right)^{\tau}$ with
$\varpi_{k}=\left(\varpi_{k1},\cdots,\varpi_{kJ_{k,A}}\right)^{\tau}$,
and assume
$\hat{\mathbf{\varpi}}=\left(\hat{\varpi}_{1}^{\tau},\cdots,\hat{\varpi}_{p}^{\tau}\right)^{\tau}$
is given by
\begin{equation}\label{SCAD2}
\begin{aligned}
\hat{\mathbf{\varpi}}
=&\operatorname*{argmin}_{\mathbf{\varpi}}\frac{1}{2}\sum_{t=1}^{T}\Big[Y_{t,T}-\hat{\alpha}_{0}^{P}\left(t/T\right)
-\sum_{k=1}^{p}\hat{\alpha}_{k}^{P}\left(t/T\right)\varpi_{k}^{\tau}\Psi_{k}\left(X_{t,T}^{\left(k\right)}\right)\Big]^2\\
&+T\sum_{k=1}^{p}p_{\mu_{T}}\left(K_{A}^{-3/2}\parallel\beta''_{k}\parallel_{L_{2}}\right),
\end{aligned}
\end{equation}
where $\parallel\beta''_{k}\parallel_{L_{2}}^{2}=\pi_{k}^{\tau}V_{k}\pi_{k}$
 and $p_{\mu_{T}}\left(\cdot\right)$ is a penalty function with a tuning parameter $\mu_{T}.$
Therefore,  the  penalized spline estimators of $\beta_{k}\left(\cdot\right)$ are given by
\[\hat{\beta}_{k}^{P}\left(\cdot\right)=\hat{\varpi}_{k}^{\tau}[\Psi_{k}\left(\cdot\right)-\Psi_{k}\left(0\right)],\ k=1,\cdots,p.\]


\section{Implementation Issues}

In this section, we discuss various implementation issues for the proposed model estimation and identification procedures.

\subsection{Smoothing Parameter Selection in  Estimation}
We predetermine the degree of polynomial spline.  Usual options are 0, 1 or 2, that is to choose linear, quadratic or cubic spline functions.  It is known that, when sufficient number of knots is used, the spline approximation method is quite stable.  Therefore, we suggest to use the same number of interior knots $K$ for all component functions and  $m_{i}$th  order B-spline basis functions in the $i$th step estimation to facilitate the computation. By experience, it is reasonable to choose $K=3,\cdots,8.$  In addition, in order to solve the least squares problem in each group  in Step I estimation, we require $1+\left(K+m_{1}\right)p<I_{T}.$
Under this constraint, we choose the optimal $K$ and $I_{T}$ by BIC
\[BIC\left(I_{T},K\right)=\log{\left(RSS/T\right)}+p\left(T/J_{K,1}\right)^{-1}\log{\left(T/J_{K,1}\right)},\]
where $RSS= \sum_{t=1}^{T}\big[Y_{t,T}-\hat{\alpha}_{0}\left(t/T\right)
-\sum_{k=1}^{p}\hat{\alpha}_{k}\left(t/T\right)\hat{\beta}_{k}(X_{t,T}^{\left(k\right)})\big]^2$
and $J_{K,1}=K+m_{3}$ is the number of B-spline basis functions used in Step III estimation.

\subsection{Computation in Model Identification}
Various penalty functions \cite{ fan2001,fu1998,Tibshirani1996,Yuan2006,Zou2006} can be used in practice.
We choose the SCAD penalty function proposed by \cite{fan2001}, which is defined by its first derivative
\[p_{\lambda}'\left(\theta\right)=\lambda I\left(\theta\leq\lambda\right)+\frac{\left(a\lambda-\theta\right)_{+}}{\left(a-1\right)}I\left(\theta>\lambda\right)\]
for some $a>2$ and $\theta>0,$ where symbol $\left(\cdot\right)_{+}=\max\left(\cdot,0\right).$
It is well-known that the SCAD penalty function  has nice properties such as unbiasedness, sparsity and continuous.
Meantime, it is singular at the origin, and have no continuous second order derivatives.
Yet it can be locally approximated by a quadratic function.

Specifically speaking, given an initial estimate $\pi_{k}^{\left(0\right)},$
or equivalently $\alpha_{k}^{\left(0\right)},$
if $\parallel\alpha_{k}^{\left(0\right)'}\parallel_{L_{2}}>0,$
then one can locally approximate $p_{\lambda_{T}}\left(\parallel\alpha'_{k}\parallel_{L_{2}}\right)$ by
\[
p_{\lambda_{T}}\left(\parallel\alpha_{k}^{\left(0\right)'}\parallel_{L_{2}}\right)
+\frac{1}{2}\frac{p'_{\lambda_{T}}\left(\parallel\alpha_{k}^{\left(0\right)'}\parallel_{L_{2}}\right)}{\parallel\alpha_{k}^{\left(0\right)'}\parallel_{L_{2}}}
\left(\parallel\alpha_{k}'\parallel_{L_{2}}^{2}-\parallel\alpha_{k}^{\left(0\right)'}\parallel_{L_{2}}^{2}\right).
\]
This implies that the objective function in \eqref{SCAD1}, denoted by $Q_{1}\left(\mathbf{\pi}\right),$ can be approximated, up to a constant, by
\[Q_{1}\left(\mathbf{\pi}\right)\approx\frac{1}{2}\left(\mathbf{Y}-\mathbf{D}_{S}\mathbf{\pi}\right)^{\tau}
\left(\mathbf{Y}-\mathbf{D}_{S}\mathbf{\pi}\right)
+\frac{1}{2}T\mathbf{\pi}^{\tau}\Omega_{1}\mathbf{\pi},\]
where $ \mathbf{Y}=\left(Y_{1,T},\cdots,Y_{T,T}\right)^{\tau},$ $\mathbf{D}_{S}=\left(D_{S1}^{\tau},\cdots,D_{ST}^{\tau}\right)^{\tau}$ with
$D_{St}=\{\hat{\mathbf{\delta}}^{\tau}\mathbf{\Phi}\left(t/T\right)\}^{\tau}$ and
$\mathbf{\hat{\delta}}=\big\{1,\hat{\beta}_{1}(X_{t,T}^{\left(1\right)}),\cdots,\hat{\beta}_{p}(X_{t,T}^{\left(p\right)})\big\}^{\tau}$ and
\begin{equation*}
\begin{aligned}
\Omega_{1}=&diag\left(\frac{p'_{\lambda_{T}}(K_{C}^{-3/2}\parallel\alpha_{0}^{\left(0\right)'}\parallel_{L_{2}})}{\parallel\alpha_{0}^{\left(0\right)'}\parallel_{L_{2}}}W_{0},
\cdots,\frac{p'_{\lambda_{T}}(K_{C}^{-3/2}\parallel\alpha_{p}^{\left(0\right)'}\parallel_{L_{2}})}{\parallel\alpha_{p}^{\left(0\right)'}\parallel_{L_{2}}}W_{p}\right).
\end{aligned}
\end{equation*}
Therefore, we can find the solution of \eqref{SCAD1} by iteratively computing the following ridge regression estimator
\[\hat{\mathbf{\pi}}=\big\{\mathbf{D}_{S}^{\tau}\mathbf{D}_{S}+T\Omega_{1}\big\}^{-1}\mathbf{D}_{S}^{\tau}\mathbf{Y}\]
until convergence.

In the same vein, we can iteratively solve the optimization problem \eqref{SCAD2}.
Let $\hat{ \mathbf{Y}}=\left(Y_{1,T}-\hat{\alpha}_{0}^{P}\left(1/T\right),\cdots,Y_{T,T}-\hat{\alpha}_{0}^{P}\left(T/T\right)\right)^{\tau},$ $\mathbf{Z}_{P}=\left(Z_{P1}^{\tau},\cdots,Z_{PT}^{\tau}\right)^{\tau}$ with
$Z_{Pt}=\big\{\hat{\mathbf{\eta}}\left(t/T\right)^{\tau}\mathbf{\Psi}\left(\mathbf{X}_{t,T}\right)\big\}^{\tau}$ and
$\mathbf{\hat{\eta}}\left(\cdot\right)=\left(\hat{\alpha}_{1}^{P}\left(\cdot\right),\cdots,\hat{\alpha}_{p}^{P}\left(\cdot\right)\right)^{\tau}$
and
\begin{equation*}
\begin{aligned}
\Omega_{2}=&diag\left(\frac{p'_{\mu_{T}}(K_{A}^{-3/2}\parallel\beta_{1}^{\left(0\right)''}\parallel_{L_{2}})}{\parallel\beta_{1}^{\left(0\right)''}\parallel_{L_{2}}}V_{1},
\cdots,\frac{p'_{\mu_{T}}(K_{A}^{-3/2}\parallel\beta_{p}^{\left(0\right)''}\parallel_{L_{2}})}{\parallel\beta_{p}^{\left(0\right)''}\parallel_{L_{2}}}V_{p}\right),
\end{aligned}
\end{equation*}
Therefore, we can iteratively compute the following ridge regression estimator
\[\hat{\mathbf{\varpi}}=\big\{\mathbf{Z}_{P}^{\tau}\mathbf{Z}_{P}+T\Omega_{2}\big\}^{-1}\mathbf{Z}_{P}^{\tau}\tilde{\mathbf{Y}}\]
until convergence.


\subsection{Tuning Parameter Selection in Model Identification}
Based on the  optimal segment length $\hat{I}_{T}$ and the optimal number of interior knots $\hat{K}$,
we then select tuning parameters $\lambda_{T}$ and $\mu_{T}$ for the proposed two-stage model identification procedure.
Following \cite{fan2001}, we take $a=3.7$ and find optimal tuning parameters $\lambda_{T}$ and $\mu_{T}$ by BIC in two
steps.

First, to select optimal $\lambda_{T}$, we define
\[BIC_{1}\left(\lambda_{T}\right)=\log{\left(RSS_{1}/T\right)}+d_{1}\frac{\log{T}}{T}
+\left(p-d_{1}\right)\frac{\log{\left(T/J_{K,2}\right)}}{T/J_{K,2}},\]
where $RSS_{1}=\sum_{t=1}^{T}\big[Y_{t,T}-\hat{\alpha}_{0}^{P}(t/T)-
 \sum_{k=1}^{p}\hat{\alpha}_{k}^{P}(t/T)\hat{\beta}_{k}(X_{t,T}^{\left(k\right)})\big]^{2}$,
 $J_{K,2}=K+m_{2}$ is the number of B-spline basis functions adopted in the second step estimation and
$d_{1}$ is the number of linear additive terms, i.e., $
\parallel\left(\hat{\alpha}_{k}^{P}\right)'\parallel_{L_{2}}$ is sufficiently small, say, no larger than $10^{-6}.$

Second, to select optimal $\mu_{T}$, we define
\[BIC_{2}\left(\mu_{T}\right)=\log{\left(RSS_{2}/T\right)}+d_{2}\frac{\log{T}}{T}+\left(p-d_{2}\right)\frac{\log{\left(T/J_{K,1}\right)}}{T/J_{K,1}},\]
where $RSS_{2}=\sum_{t=1}^{T}\big[Y_{t,T}-\hat{\alpha}_{0}^{P}(t/T)-
 \sum_{k=1}^{p}\hat{\alpha}_{k}^{P}(t/T)\hat{\beta}_{k}^{P}(X_{t,T}^{\left(k\right)})\big]^{2}$,
 and
$d_{2}$ is the number of constant varying-coefficient terms, i.e., $\parallel \beta_{k}''\parallel_{L_{2}}$ is sufficiently small.

Thus, we select the optimal tuning parameters
\[\hat{\lambda}_{T}=\operatorname*{argmin}_{\lambda_{T}}BIC_{1}\left(\lambda_{T}\right)\ \ \text{and}\ \
\hat{\mu}_{T}=\operatorname*{argmin}_{\mu_{T}}BIC_{2}\left(\mu_{T}\right).\]

\section{Asymptotic Results} \ \\

In this section, we demonstrate that the proposed three-step spline estimation method is consistent under regularity conditions and show that the proposed  two stage model identification procedure can correctly select additive terms and varying-coefficient terms with probability approaching one.  Furthermore, we conclude that $L_{2}$ rate of convergence of each component function achieves the optimal rate of the spline estimator of univariate nonparametric function stated in \cite{stone1982}. The regularity conditions and assumptions are given in the Appendix.

\subsection{Asymptotic results of spline estimators} \ \\

Let $G_{k}=span\{\psi_{kl},l=1,\cdots,J_{k,A}\}$ and $H_{k}=span\{\varphi_{kl},l=1,\cdots,J_{k,C}\}$.  We introduce
 \[\rho_{A}=\sum_{k=1}^{p}\inf_{\mu_{k}\in G_{k}}\sup_{x\in[0,1]}|\beta_{k}\left(x\right)-\mu_{k}\left(x\right)|,\ \
\rho_{C}=\sum_{k=1}^{p}\inf_{\nu_{k}\in H_{k}}\sup_{x\in[0,1]}|\alpha_{k}\left(x\right)-\nu_{k}\left(x\right)|\]
to measure the degree of spline approximation  of varying-coefficient component function and additive component function.

Proposition 1 establishes $L_{2}$ rate  of convergence of initial estimators of scaled additive component functions
$\gamma_{k}=w_{k}\beta_{k}.$
\begin{proposition}\label{prop1}
Under Assumptions (A1), (A2), (A4), (A5), (A7) , (A8)  and (A9), if $\rho_{A}=o\left(1\right),$ as $T\to\infty,$
\[\parallel\hat{\gamma}_{k}-\gamma_{k}\parallel^{2}_{L_{2}}=O_{p}\left(\frac{\rho_{A}^{2}}{N_{T}}+\frac{K_{A}}{T}\right),\]
\[\frac{1}{T}\sum_{t=1}^{T}\big[\hat{\gamma}_{k}(X_{t,T}^{\left(k\right)})-\gamma_{k}(X_{t,T}^{\left(k\right)})\big]^{2}
=O_p\left(\frac{\rho_{A}^{2}}{N_{T}}+\frac{K_{A}}{T}\right),\]
\end{proposition}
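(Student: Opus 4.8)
The plan is to analyze the least-squares problem \eqref{minimize1} group by group, then combine via the averaging identity \eqref{relationsol}. Fix a group $s$. Writing the design in terms of the scaled B-spline vector $\Psi_k$, the normal equations for \eqref{minimize1} express $\hat h^{(s)}_{kl}$ through the inverse of the (centered) empirical Gram matrix $\widehat\Sigma_s = I_T^{-1}\sum_{j} \tilde{\mathbf B}(X_{t_{sj},T})\tilde{\mathbf B}(X_{t_{sj},T})^\tau$, where $\tilde{\mathbf B}$ stacks $1$ and the $\psi_{kl}$'s. The first step is to show that, uniformly over $s$, $\widehat\Sigma_s$ is invertible with eigenvalues bounded away from $0$ and $\infty$ with probability tending to one; this rests on Assumptions (A2), (A4), (A5) (density of the covariates bounded above and below), the B-spline properties listed in the Appendix, and a Bernstein-type inequality for the $\beta$-mixing / locally stationary array (A1), applied with $I_T$ large enough — here (A7)–(A9) pin down the rate of $I_T, N_T, K_A$ so that $I_T \gg K_A \log T$. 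This controls the stochastic part of the Gram matrix.

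Next I would decompose the response inside group $s$ as signal plus noise plus approximation error: $Y_{t_{sj},T} = C_{0s} + \sum_k C_{ks}\beta_k(X^{(k)}_{t_{sj},T}) + \sigma_{t_{sj}}\varepsilon_{t_{sj}} + r_{t_{sj}}$, where $r_{t_{sj}}$ collects (i) the error from replacing $\alpha_k(t_{sj}/T)$ by the local constant $C_{ks}$ — of order the modulus of continuity of $\alpha_k$ over an interval of length $I_T/T$, hence $O(I_T/T)$ under the smoothness in (A8) — and (ii) the spline approximation error for $\beta_k$, controlled by $\rho_A$. Projecting onto the spline space and using the Gram-matrix bound from the first step, the estimation error $\hat\beta_k - \Pi_k\beta_k$ (with $\Pi_k$ the relevant $L_2$-type projection) splits into a variance term driven by $I_T^{-1}\sum_j \tilde{\mathbf B}(X_{t_{sj},T})\,\sigma_{t_{sj}}\varepsilon_{t_{sj}}$, which is $O_p(K_A/I_T)$ in squared $L_2$ norm by the moment/mixing conditions, plus a bias term of order $\rho_A^2 + (I_T/T)^2$. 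Using $\hat\beta_k = \hat h^{(s)}_k/C_{ks}$ together with $\sum_s C_{ks}/N_T = w_k$ and $\sum_s C_{ks}^2/N_T = O(1)$ (from the normalization $\|\alpha_k\|_{L_2}=1$ and (A8)), the averaged estimator $\hat\gamma_k = N_T^{-1}\sum_s C_{ks}\hat\beta_k$ has its variance reduced by the factor $N_T$: the cross-group noise terms are, conditionally, uncorrelated up to the mixing remainder, giving $N_T^{-2}\sum_s C_{ks}^2\cdot O_p(K_A/I_T) = O_p(K_A/(N_T I_T)) = O_p(K_A/T)$, while the bias contributes $O_p(\rho_A^2/N_T)$ after the same averaging (the bias in each group is common up to the $\beta_k$-approximation term, which does not shrink, but the $C_{ks}$-induced discrepancy does). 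Collecting terms yields the first displayed bound; the empirical-norm version then follows from the first by a standard equivalence between $\|\cdot\|_{L_2}$ and the empirical norm $T^{-1}\sum_t(\cdot)^2$ on the spline space (again the Gram-matrix concentration, now for the full sample), as in Lemma-type results of the Supplementary.

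The main obstacle is the averaging step: one must show that the leading stochastic term genuinely enjoys the $1/N_T$ variance reduction rather than merely being bounded groupwise. This requires (a) handling the dependence between groups — adjacent groups are not independent, so a blocking argument or an explicit bound on cross-group covariances via the mixing coefficients of (A1) is needed, and the rate conditions in (A7)/(A9) must ensure these cross terms are negligible relative to $K_A/T$; and (b) controlling the interaction between the random Gram-matrix inverse $\widehat\Sigma_s^{-1}$ and the noise across different $s$, which I would handle by replacing $\widehat\Sigma_s^{-1}$ by the deterministic limit on the high-probability event from step one and absorbing the difference into the bias-type remainder. A secondary technical point is bounding $\sum_s C_{ks}^2/N_T$ and ensuring $\min_s |C_{ks}|$ is not too small, so that dividing by $C_{ks}$ to pass from $\hat h^{(s)}_k$ to $\hat\beta_k$ is harmless; this is where the identifiability normalization and the continuity of $\alpha_k$ in (A8) enter, since the $C_{ks}$ are (close to) values of a continuous function with unit $L_2$ norm, hence bounded and bounded away from zero away from a negligible set of groups.
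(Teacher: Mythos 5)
Your outline follows essentially the same architecture as the paper's proof: groupwise least squares for \eqref{minimize1}, eigenvalue bounds for the groupwise Gram matrices (Lemma \ref{eigenvalue}(i)), a bias--variance split around the noiseless groupwise projection, a per-group stochastic error of order $K_{A}/I_{T}$ reduced to $K_{A}/T$ by the $N_{T}^{-2}$ prefactor coming from the average over groups, and the empirical-norm statement via the norm equivalences of Lemmas \ref{inducenorm}--\ref{inner}. Three points of divergence are worth recording. First, your passage through $\hat{\beta}_{k}=\hat{h}_{k}^{\left(s\right)}/C_{ks}$, and the attendant worry that $\min_{s}|C_{ks}|$ be bounded away from zero, is a detour the paper deliberately avoids: $\hat{\gamma}_{k}$ is defined in \eqref{relationsol} as the plain average of the groupwise coefficient vectors $\hat{h}_{k}^{\left(s\right)}$, so no division by $C_{ks}$ ever occurs, and no such lower bound is available in general (the normalization $\parallel\alpha_{k}\parallel_{L_{2}}=1$ does not prevent $\alpha_{k}$ from vanishing on part of $[0,1]$). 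Second, your concern about cross-group dependence of the noise, and the proposed blocking argument, is moot under the paper's assumptions: by (A3) the $\varepsilon_{t}$ are i.i.d.\ and conditionally white noise given the covariates, so all cross terms $E[\varepsilon_{t_{sj}}\varepsilon_{t_{s'j'}}V_{sj}^{\tau}V_{s'j'}]$ with $\left(s,j\right)\neq\left(s',j'\right)$ vanish exactly; the mixing condition (A4) enters only through the Bernstein-type concentration of the Gram matrices in Lemma \ref{inner}, not through the variance reduction (note, though, that (A3) is invoked in the paper's proof while not being listed among the hypotheses of the Proposition). Third, you are more scrupulous than the paper in one respect: you include the error from replacing $\alpha_{k}\left(t/T\right)$ by the local constant $C_{ks}$, of order $I_{T}/T=1/N_{T}$ per group, whereas the paper's proof treats the approximated model \eqref{subregression2} as exact and this term never appears; its control, however, rests on the smoothness of $\alpha_{k}$ in (A6) rather than on (A8), which only constrains the knot mesh, and (A6) is likewise absent from the Proposition's hypotheses. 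Subject to these corrections your plan reproduces the paper's argument, including its weakest step, namely the claim that the averaged approximation error is $O\left(\rho_{A}^{2}/N_{T}\right)$ rather than $O\left(\rho_{A}^{2}\right)$, which you yourself flag as requiring more than the triangle inequality since the groupwise spline-approximation biases are systematic and need not cancel.
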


\vskip 2pt

{\bf Remark 3:}
In comparison with the convergence of the spline estimation of univariate nonparametric function,
we notice that the bias term in  $L_{2}$ rate  of convergence of initial estimators
is smaller when the number of groups $N_{T}$ is larger than 1.
The larger number of segmentation groups, the smaller the bias, given the number of observations in each group is at least larger than the number of parameters in spline approximation of $\gamma_{k}.$

\medskip

Based on the result of Proposition 1, one can construct $L_{2}$ rate of convergence of the spline estimation of varying-coefficient component function $\alpha_{k}$ as follows.
\begin{thm}\label{alpha}
Under Assumptions (A1) \textrm{-} (A10), if $\rho_{A}\vee \rho_{C}=o\left(1\right),$ as $T\to\infty,$
\[\parallel\hat{\alpha}_{k}-\alpha_{k}\parallel^{2}_{L_{2}}=O_{p}\left(\frac{\rho_{A}^{2}}{N_{T}}+\rho_{C}^{2}+\frac{K_{A}\vee K_{C}}{T}\right),\]
\[\frac{1}{T}\sum_{t=1}^{T}\big[\hat{\alpha}_{k}\left(t/T\right)-\alpha_{k}\left(t/T\right)\big]^{2}
=O_p\left(\frac{\rho_{A}^{2}}{N_{T}}+\rho_{C}^{2}+\frac{K_{A}\vee K_{C}}{T}\right),\]
where `$a \vee b $' denotes the maximum of $a$ and $b.$
\end{thm}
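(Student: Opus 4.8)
The plan is to analyze the varying-coefficient regression \eqref{stageI} treating $\hat{\gamma}_k$ as a noisy surrogate for $\gamma_k$, control the extra error term in $e_{t,T}$ using Proposition~\ref{prop1}, and then translate the rate for $\hat{\delta}_k$ into a rate for $\hat{\alpha}_k$ via the normalization in \eqref{hatalpha}. First I would set up the spline least-squares problem in \eqref{stageI} in matrix form: write $\mathbf{Y}=\mathbf{B}\mathbf{g}^{*}+\boldsymbol{\rho}+\mathbf{e}$, where $\mathbf{B}$ collects the (scaled) spline basis evaluations $\varphi_{kl}(t/T)\hat{\gamma}_k(X_{t,T}^{(k)})$, $\mathbf{g}^{*}$ are the best spline coefficients approximating $\delta_k$, $\boldsymbol{\rho}$ is the spline approximation bias (of order $\rho_C$ componentwise), and $\mathbf{e}$ is the vector of $e_{t,T}$. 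The normal equations give $\hat{\mathbf{g}}-\mathbf{g}^{*}=(T^{-1}\mathbf{B}^{\tau}\mathbf{B})^{-1}T^{-1}\mathbf{B}^{\tau}(\boldsymbol{\rho}+\mathbf{e})$, so the argument reduces to (i) showing $T^{-1}\mathbf{B}^{\tau}\mathbf{B}$ has eigenvalues bounded away from $0$ and $\infty$ with probability tending to one, and (ii) bounding the two pieces $T^{-1}\mathbf{B}^{\tau}\boldsymbol{\rho}$ and $T^{-1}\mathbf{B}^{\tau}\mathbf{e}$.

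For step (i), I would use the scaled B-spline properties listed in the Appendix together with the locally stationary structure (Assumption~(A1)) and the density/moment conditions (A2), (A4)--(A5): the "true" Gram matrix built from $\gamma_k$ instead of $\hat{\gamma}_k$ has well-conditioned eigenvalues of order $1$ by the standard spline/design argument (as in \cite{huang2004polynomial,xue2006estimation}), and a mixing/Bernstein-type inequality for locally stationary arrays controls the deviation of the empirical Gram matrix from its expectation at rate $O_p(\sqrt{(K_A\vee K_C)\log T/T})=o_p(1)$; finally Proposition~\ref{prop1} shows that replacing $\gamma_k$ by $\hat{\gamma}_k$ perturbs every entry by $O_p((\rho_A^2/N_T+K_A/T)^{1/2})=o_p(1)$, so the perturbed Gram matrix stays well-conditioned. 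For step (ii), the bias piece contributes $O_p(\rho_C^2)$ to $\|\hat{\delta}_k-\delta_k\|_{L_2}^2$ after the usual equivalence between the empirical and $L_2$ norms on the spline space. For the stochastic piece, decompose $e_{t,T}=\sigma(t/T,\mathbf{X}_{t,T})\varepsilon_t-\sum_k\delta_k(t/T)[\hat{\gamma}_k-\gamma_k](X_{t,T}^{(k)})$: the martingale-type term gives the familiar variance order $O_p(K_C/T)$ (number of basis functions over sample size), while the second term is handled by Cauchy--Schwarz together with the empirical-norm bound in Proposition~\ref{prop1}, contributing $O_p(\rho_A^2/N_T+K_A/T)$. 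Combining, $\|\hat{\delta}_k-\delta_k\|_{L_2}^2=O_p(\rho_A^2/N_T+\rho_C^2+(K_A\vee K_C)/T)$, and the same bound holds for the empirical norm $T^{-1}\sum_t[\hat{\delta}_k(t/T)-\delta_k(t/T)]^2$.

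Finally I would pass from $\hat{\delta}_k$ to $\hat{\alpha}_k=\hat{\delta}_k/\|\hat{\delta}_k\|_{L_2}$. Since $\|\delta_k\|_{L_2}=1/w_k$ and $\alpha_k=\delta_k/\|\delta_k\|_{L_2}$ by the identifiability condition $\|\alpha_k\|_{L_2}=1$, a short Slutsky-type argument—using $|\,\|\hat{\delta}_k\|_{L_2}-\|\delta_k\|_{L_2}\,|\le\|\hat{\delta}_k-\delta_k\|_{L_2}$ and that $\|\delta_k\|_{L_2}=1/w_k$ is bounded away from zero (guaranteed by the assumptions on $w_k$, since $\alpha_k$ is nonzero)—yields $\|\hat{\alpha}_k-\alpha_k\|_{L_2}^2=O_p(\|\hat{\delta}_k-\delta_k\|_{L_2}^2)$, and likewise for the empirical norm. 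The main obstacle I anticipate is step (i) combined with the error analysis of the second term in $e_{t,T}$: one must carefully verify that the estimation error from Step~I (which enters both the design matrix $\mathbf{B}$ and the "noise" $\mathbf{e}$) does not interact badly with the dependence structure of the locally stationary array, and this is where the uniform (sup-norm or empirical-norm) control from Proposition~\ref{prop1}, rather than just the $L_2$ rate, is essential. The bookkeeping of how the $\rho_A^2/N_T$ term propagates unchanged through the varying-coefficient fit—rather than being inflated by a factor of $K_C$—is the delicate point.
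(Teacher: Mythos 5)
Your proposal is correct and follows essentially the same route as the paper: least squares on the surrogate design built from $\hat{\gamma}_k$, eigenvalue control of the Gram matrix (the paper's Lemma~\ref{eigenvalue}), a bias--plus--stochastic decomposition in which Proposition~\ref{prop1} supplies the $\rho_A^2/N_T+K_A/T$ contribution, and a final renormalization from $\hat{\delta}_k$ to $\hat{\alpha}_k$. The only (cosmetic) difference is bookkeeping: you route the plug-in error $\hat{\gamma}_k-\gamma_k$ through the noise vector and a projection/Cauchy--Schwarz bound, whereas the paper absorbs it into the approximation error by defining $\tilde{\mathbf{g}}$ as the projection of the true regression function onto the estimated design, leaving only $\sigma\varepsilon_t$ in the stochastic term; your explicit treatment of the normalization via $\|\delta_k\|_{L_2}=1/w_k$ is, if anything, slightly more careful than the paper's.
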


{\bf Remark 4:}
Theorem \ref{alpha} shows that there exists an additional bias term $\frac{\rho_{A}^{2}}{N_{T}}+\frac{K_{A}}{T}$ in comparison with the convergence of the spline estimation of univariate nonparametric function. This term happens to be the rate of convergence obtained in Proposition 1 and reflects the error of the initial estimator of scaled additive function $\gamma_{k}.$

\medskip

Next theorem presents $L_{2}$ rate of convergence of the spline estimation of additive component function $\beta_{k}.$
\begin{thm}\label{beta}
Under the Assumptions of Theorem 1,
\begin{equation*}
\begin{aligned}
\parallel\hat{\beta}_{k}-\beta_{k}\parallel^{2}_{L_{2}}
=&O_{p}\left(\rho_{A}^{2}+\rho_{C}^{2}+\frac{K_{A}\vee K_{C}}{T}
+\frac{K_{A}\rho_{A}^{2}}{TN_{T}}+\frac{K_{A}\rho_{C}^{2}}{T}\right)\\
=&O_{p}\left(\rho_{A}^{2}+\rho_{C}^{2}+\frac{K_{A}\vee K_{C}}{T}\right).
\end{aligned}
\end{equation*}
\end{thm}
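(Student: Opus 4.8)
The plan is to analyze the least-squares problem in Step III directly, treating model \eqref{stageIII} as a varying-coefficient model with known (estimated) coefficients $\hat{\alpha}_k(t/T)$ and unknown additive functions $\beta_k$. First I would write the normal equations for $(\hat f_{kl})$ and express $\hat\beta_k - \beta_k$ as the sum of three pieces: (i) a spline-approximation bias for $\beta_k$, which is of order $\rho_A^2$ in squared $L_2$ norm; (ii) a stochastic error driven by the ``clean'' noise $\sigma(t/T,\mathbf{X}_{t,T})\varepsilon_t$, which by the standard spline-regression argument (using the near-orthonormality of the scaled B-spline basis $\psi_{kl}$ listed in the Appendix, the boundedness of the design Gram matrix away from zero and infinity, and a bound on the variance of $T^{-1}\sum_t \psi_{kl}(X_{t,T}^{(k)})\sigma_{t}\varepsilon_t$ via Assumptions on the locally stationary process) is $O_p(K_A/T)$; and (iii) the propagated error coming from replacing $\alpha_k$ by $\hat\alpha_k$, i.e. the term $\sum_k [\alpha_k(t/T)-\hat\alpha_k(t/T)]\beta_k(X_{t,T}^{(k)})$ in $\eta_{t,T}$.

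The core of the argument is controlling piece (iii). Here I would use Theorem \ref{alpha}, which gives $\|\hat\alpha_k-\alpha_k\|_{L_2}^2 = O_p(\rho_A^2/N_T + \rho_C^2 + (K_A\vee K_C)/T)$ and the same rate for the empirical (design-point) average. Projecting $\eta_{t,T}$ onto the spline space $G_k$ and using the boundedness of $\beta_k$ and of the inverse Gram matrix, the contribution of (iii) to $\|\hat\beta_k-\beta_k\|_{L_2}^2$ is bounded by a constant times $\frac1T\sum_t[\alpha_k(t/T)-\hat\alpha_k(t/T)]^2$ up to the spline-smoothing amplification; more precisely one gets a term of order $\rho_C^2 + (K_A\vee K_C)/T$ together with cross-terms. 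The slightly delicate bookkeeping is that the $\rho_A^2/N_T$ piece inside Theorem \ref{alpha}'s rate gets multiplied by a factor $K_A/T$ (the effective ``number of parameters over sample size'' amplification in the $\beta$-regression), producing the $K_A\rho_A^2/(TN_T)$ term, and similarly $\rho_C^2$ contributes a $K_A\rho_C^2/T$ term; then I would note that since $K_A/T = o(1)$ and $N_T\ge1$, these two extra terms are dominated by $\rho_A^2$ and $\rho_C^2$ respectively, collapsing the first displayed bound into the second.

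I expect the main obstacle to be piece (iii): handling the non-trivial dependence between the estimated coefficients $\hat\alpha_k$ (which depend on the whole sample through Steps I and II) and the spline design in Step III. The cleanest route is not to chase exact independence but to bound everything by deterministic envelopes — e.g. bound $|\hat\beta_k - \beta_k|$ in $L_2$ by the norm of the projection of the full residual $\eta_{t,T}$, then split $\eta_{t,T}$ and apply Cauchy–Schwarz plus the already-established rates, so that the cross-dependence only costs constants. A second, more routine obstacle is verifying that the moment/mixing conditions for the locally stationary array (Assumptions A1–A2 and the $\alpha$-mixing-type conditions) suffice to push the $\varepsilon$-driven stochastic term to $O_p(K_A/T)$; this is essentially the same computation underlying Proposition \ref{prop1} and Theorem \ref{alpha}, so I would invoke those arguments rather than redo them. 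Finally, one should check that the identifiability normalization $\beta_k(0)=0$ (implemented via $\bar\psi_{kl}=\psi_{kl}-\psi_{kl}(0)$) does not affect the rate, which is immediate since subtracting a constant changes the $L_2$ error only by the (already controlled) error at the single point $0$.
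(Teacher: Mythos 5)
Your proposal follows essentially the same route as the paper's proof: the paper likewise treats Step III as a varying-coefficient spline regression, splits $\hat\beta_k-\beta_k$ into a spline-approximation bias of order $\rho_A^2$, a clean-noise stochastic term of order $K_A/T$ (via the Gram-matrix eigenvalue bounds of Lemma \ref{eigenvalue}), and a propagated error controlled by the empirical rate in Theorem \ref{alpha}, with the cross terms $K_A\rho_A^2/(TN_T)$ and $K_A\rho_C^2/T$ arising exactly as you describe from the interaction $E|(\widetilde{Z}_t-Z_t)\varepsilon_t|^2 \preceq [\hat\alpha_k(t/T)-\alpha_k(t/T)]^2\cdot O(K_A)$. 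Your handling of the dependence via deterministic envelopes and of the $\beta_k(0)=0$ normalization also matches the paper, so the plan is correct and no genuinely different ideas are involved.
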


{\bf Remark 5:}
Similarly, in comparison with the rate of convergence of spline estimation for univariate nonparametric function,
Theorem \ref{beta} also has an additional bias term $\rho_{C}^2+K_{C}/T$. The reason this term exists is because
the estimation of additive component function $\beta_{k}$ in Step III is based on the spline estimates of varying-coefficient function $\alpha_{k}$ obtained in Step II.  As expected, the convergence of the updated spline estimation of $\beta_{k}$ doe not depend on the number of segmentation groups in Step I of the initial estimation of rescaled additive function $\gamma_{k}.$

\subsection{Asymptotic results of model identification} 
We, here, respectively, demonstrate the consistency of selecting additive terms and varying-coefficient terms, and present the $L_{2}$ rate of convergence of penalized spline estimators of $\alpha_{k}$ and $\beta_{k}.$

\begin{thm}\label{Cadditive}
Suppose that $\alpha_{k}'=0,d_{k}\ge 2,k=p_1+p_2+1,\cdots,p.$  Given $\lambda_{T}\to 0,$
$\liminf_{T\to\infty}\liminf_{w\to 0+}p'_{\lambda_{T}}\left(w\right)>0$
and $\theta_{T}/\lambda_{T}\to 0$ with $\theta_{T}=K_{C}^{1/2}T^{-1/2}+K_{C}^{-d_{C}}$,
then, under Assumptions (A1) \textrm{-} (A10), as $T\to\infty,$
\begin{itemize}
\item[(i)]with probability approaching to 1, $\hat{\alpha}^{P}_{k}$ is constant a function a.s. for
$k=p_{1}+p_{2}+1,\cdots,p;$
\item[(ii)]$L_{2}$ rate of convergence for  penalized spline estimator of $\alpha_{k}$ is given by
\[
\parallel \hat{\alpha}_{k}^{P}-\alpha_{k}\parallel_{L_{2}}^{2}=O_p\left(K_{k,C}T^{-1}+K_{k,C}^{-2d_{k,C}}\right)
=O_p\left(K_{C}T^{-1}+\underline{K}_{C}^{-2d_{C}}\right)
\]
for $k=0,\cdots,p,$ where $\underline{K}_{C}=\min_{0\leq k\leq p}K_{k,C}$ and $d_{C}=\min_{0\leq k\leq p}d_{k}.$
\end{itemize}
\end{thm}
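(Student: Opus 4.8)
The plan is to follow the standard two-part template for SCAD-type penalized estimators, adapting it to the B-spline coefficient setting and accounting for the fact that the ``response'' in \eqref{SCAD1} is contaminated by the plug-in error $\beta_k-\hat\beta_k$. First I would set up notation: write the design matrix $\mathbf{D}$ whose $t$-th row stacks $\Phi_0(t/T)^\tau$ and $\hat\beta_k(X_{t,T}^{(k)})\Phi_k(t/T)^\tau$ over $k=1,\dots,p$, and decompose $Y_{t,T}=\pi^{*\tau}D_t+r_{t,T}+\tilde\varepsilon_t$ where $\pi^*$ is the (scaled) spline coefficient of the true $\alpha_k$, $r_{t,T}$ is the spline-approximation bias of size $O(\rho_C)$ plus the $\hat\gamma_k$-type error, and $\tilde\varepsilon_t$ is as defined before \eqref{SCAD1}. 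The key preliminary facts I would invoke are: (a) the scaled-B-spline properties in the Appendix give eigenvalue bounds $c/K_C\le\lambda_{\min}(W_k),\ \lambda_{\max}(W_k)\le C/K_C$ and analogous bounds for $T^{-1}\mathbf{D}^\tau\mathbf{D}$ being comparable to the identity up to constants (using the positive-definiteness of the covariance of $\{\beta_k(X^{(k)})\}$ from the identifiability conditions and a locally-stationary law of large numbers, Assumptions (A1),(A4),(A5)); (b) $\|\hat\beta_k-\beta_k\|_{L_2}^2$ is controlled by Theorem \ref{beta}, which feeds the plug-in error; (c) $\|\pi_k^\tau\Phi_k'\|_{L_2}^2=\pi_k^\tau W_k\pi_k$, so that $K_C^{-3/2}\|\alpha_k'\|_{L_2}$ is, up to the $K_C^{-1}$ eigenvalue scaling, of the same order as $\|\pi_k-\pi_k^*\|$ when $\alpha_k'=0$ but $\pi_k\ne0$.

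For part (ii) (the rate), I would first prove the existence of a local minimizer $\hat\pi$ within an $O_p(\sqrt{K_C}(\alpha_T+\theta_T))$-ball of $\pi^*$, where $\alpha_T^2=\rho_A^2/N_T+\rho_C^2+(K_A\vee K_C)/T$ absorbs the estimation-stage error and $\theta_T=K_C^{1/2}T^{-1/2}+K_C^{-d_C}$. This is the usual argument of Fan and Li \cite{fan2001}: evaluate $Q_1(\pi^*+\sqrt{K_C}\,u\,c_T)-Q_1(\pi^*)$ for $\|u\|=c$, show the quadratic term dominates (using the eigenvalue bounds on $T^{-1}\mathbf{D}^\tau\mathbf{D}$), show the cross term with $\tilde\varepsilon$ is $O_p(\sqrt{T K_C}\,c_T\|u\|)$ by a martingale/mixing moment bound on $\mathbf{D}^\tau\tilde\varepsilon$, control the plug-in bias $\mathbf{D}^\tau r$ via Theorem \ref{beta}, and note the penalty term is bounded in absolute value by $T\sum_k p'_{\lambda_T}(\cdot)\cdot O(K_C^{-3/2}\sqrt{K_C^{-1}}\,c_T)$, which is $o$ of the quadratic term because $p'_{\lambda_T}$ is bounded and for the nonzero components $\|\alpha_k'\|_{L_2}$ is bounded away from $0$ so $p'_{\lambda_T}(\cdot)\to0$. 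Then $\|\hat\pi-\pi^*\|^2=O_p(K_C c_T^2)$, and converting back through $W_k$'s eigenvalues plus the normalization in $\hat\alpha_k^P=\hat\pi_k^\tau\Phi_k/\|\hat\pi_k^\tau\Phi_k\|_{L_2}$ gives $\|\hat\alpha_k^P-\alpha_k\|_{L_2}^2=O_p(K_CT^{-1}+\underline K_C^{-2d_C})$ once one checks that the additional estimation-stage bias terms $\rho_A^2/N_T$, $\rho_C^2$, etc., are of smaller or equal order and are subsumed (this is where one uses $\rho_C\asymp K_C^{-d_C}$ from the spline approximation assumption and the knot-number choices in the Appendix).

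For part (i) (selection consistency), I would argue by contradiction in the usual SCAD fashion: on the event that $\hat\pi$ lies in the $c_T$-ball from part (ii), suppose $\|\hat\pi_k^\tau\Phi_k'\|_{L_2}\ne0$ for some $k\in\{p_1+p_2+1,\dots,p\}$ (i.e.\ $\hat\alpha_k^P$ not constant). Examine the block of the stationarity/KKT condition for $Q_1$ restricted to that $\pi_k$: the sum of the smooth (least-squares) gradient, which is $O_p(\sqrt{T K_C}\,c_T+\text{bias})=O_p(T\theta_T)$ componentwise in the appropriate norm, and the penalty gradient $T\,p'_{\lambda_T}(K_C^{-3/2}\|\alpha_k'\|_{L_2})\cdot K_C^{-3/2}W_k\pi_k/\|\pi_k^\tau\Phi_k'\|_{L_2}$, whose norm is of exact order $T\,p'_{\lambda_T}(0+)\cdot K_C^{-3/2}\cdot K_C^{-1}\cdot(\text{unit vector})$ since $p'_{\lambda_T}$ is continuous near $0$ and by hypothesis $\liminf p'_{\lambda_T}(w)>0$ as $w\to0+$. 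The ratio of the penalty-gradient magnitude to the least-squares-gradient magnitude is of order $p'_{\lambda_T}(0+)/\theta_T\to\infty$ because $\theta_T/\lambda_T\to0$ and $p'_{\lambda_T}(0+)\asymp\lambda_T$ for SCAD; hence the KKT equation cannot hold with $\pi_k\ne0$, forcing $\hat\pi_k^\tau\Phi_k'\equiv0$, i.e.\ $\hat\alpha_k^P$ is a.s.\ constant, with probability tending to one. The main obstacle is the bookkeeping in this last step: one must carefully track the $K_C$-powers coming from the scaled B-spline basis (the $J_{k,C}^{1/2}$ factors in $\varphi_{kl}$, the $K_C^{-1}$ eigenvalue scaling of $W_k$, and the $K_C^{-3/2}$ inside the penalty argument) so that the condition ``$\theta_T/\lambda_T\to0$ with $\theta_T=K_C^{1/2}T^{-1/2}+K_C^{-d_C}$'' is exactly what makes the separation work, and simultaneously verify that the extra error inherited from the estimation stage (the $\rho_A^2/N_T$ and $\hat\gamma_k$ terms routed through $\hat\beta_k$) is dominated by $\theta_T$ under the knot-rate assumptions — this compatibility check, rather than any single inequality, is the delicate part.
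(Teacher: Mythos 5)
Your part (ii) is essentially the paper's argument: the Fan--Li existence-of-a-local-minimizer-in-a-ball computation, comparing $Q_{1}(\tilde{\pi}+\theta_{T}\mathbf{u})$ with $Q_{1}(\tilde{\pi})$ on $|\mathbf{u}|=C$, letting the quadratic term (controlled by the eigenvalue bounds on $T^{-1}\mathbf{D}_{S}^{\tau}\mathbf{D}_{S}$) dominate the score term and the penalty increment (controlled by the Lipschitz bound $|p_{\lambda}(s)-p_{\lambda}(t)|\leq\lambda|s-t|$), then adding the spline approximation error $\parallel\alpha_{k}-\tilde{\alpha}_{k}\parallel_{L_{2}}=O(K_{k,C}^{-d_{k}})$; the paper works with radius $\theta_{T}$ directly rather than $\sqrt{K_{C}}c_{T}$, but the bookkeeping lands on the same rate. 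For part (i) you take a genuinely different route. You argue by contradiction from the KKT/stationarity condition at a block $\pi_{k}$ with $W_{k}\hat{\pi}_{k}\neq 0$, showing the penalty gradient must dominate the least-squares gradient. The paper instead never differentiates the penalty at its singularity: it shows directly that $Q_{1}(\mathbf{m}_{T})\leq Q_{1}(\mathbf{m}_{T}+\mathbf{g}_{T})$ for every perturbation $g_{T,1}(u)=g(u)\hat{\beta}_{p_{1}+p_{2}+1}(X_{t,T}^{(p_{1}+p_{2}+1)})$ with $\parallel g\parallel_{L_{2}}\leq C\theta_{T}$ in a non-constant direction, bounding the least-squares gain above by $|\mathbf{g}_{T}|\cdot|2\mathbf{Y}-2\mathbf{m}_{T}-\mathbf{g}_{T}|$ and the penalty increase below via the mean value theorem and $\liminf_{T}\liminf_{w\to 0+}p'_{\lambda_{T}}(w)/\lambda_{T}>0$. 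Your subgradient route is workable, but note the objective is non-differentiable \emph{exactly} on the target set $\{\pi_{k}:\pi_{k}^{\tau}W_{k}\pi_{k}=0\}$, so the conclusion has to be phrased as ``no stationary point exists off that set inside the $\theta_{T}$-ball''; the paper's objective comparison delivers this without touching subgradients.

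There is one concrete error you must repair, and it sits precisely where you say the delicacy lies. Your preliminary fact (a), $c/K_{C}\leq\lambda_{\min}(W_{k})\leq\lambda_{\max}(W_{k})\leq C/K_{C}$, is false: $W_{k}$ is the Gram matrix of the \emph{derivatives} $\Phi_{k}'$, so it is singular --- its null space is spanned by the coefficient vector of the constant function (which is why $\parallel(\hat{\alpha}_{k}^{P})'\parallel_{L_{2}}=0$ forces a \emph{constant}, not a zero, estimate) --- and by the Markov inequality for splines its largest eigenvalue is of order $K_{C}^{2}$, not $K_{C}^{-1}$, while its smallest nonzero eigenvalue is of order one. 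The identity-like Gram matrix is that of $\Phi_{k}$ itself, which is what converts coefficient error into $L_{2}$ function error in part (ii); $W_{k}$ enters only the penalty, and the factor $K_{C}^{-3/2}$ inside $p_{\lambda_{T}}(\cdot)$ is calibrated against $\lambda_{\max}(W_{k})\asymp K_{C}^{2}$. With the eigenvalue scaling corrected, the separation $\theta_{T}/\lambda_{T}\to 0$ does what you intend and the rest of your sketch goes through.
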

\begin{thm}\label{Cvarying}
Suppose that $\beta_{k}''=0,r_{k}\ge 2,k=p_1+1,\cdots,p_{1}+p_{2}.$  Given $\mu_{T}\to 0,$
$\liminf_{T\to\infty}\liminf_{w\to 0+}p'_{\mu_{T}}\left(w\right)>0$
and $\vartheta_{T}/\mu_{T}\to 0$ with $\vartheta_{T}=K_{A}^{1/2}T^{-1/2}+K_{A}^{-r_{A}},$
then, under Assumptions (A1) \textrm{-} (A10), as $T\to\infty,$
\begin{itemize}
\item [(i)]  with probability approaching to 1, $\hat{\beta}^{P}_{k}$ is a linear function a.s. for
$k=p_{1}+1,\cdots,p_{1}+p_{2};$
\item[(ii)]  $L_{2}$ rate of convergence for penalized spline estimator of $\beta_{k}$ is given by
\[\parallel \hat{\beta}_{k}^{P}-\beta_{k}\parallel_{L_{2}}^{2}=O_p\left(K_{k,A}T^{-1}+K_{k,A}^{-2r_{k,A}}\right)
=O_p\left(K_{A}T^{-1}+\underline{K}_{A}^{-2r_{A}}\right)\]
for $ k=1,\cdots,p,$ where $\underline{K}_{A}=\min_{1\leq k\leq p}K_{k,A}$ and $r_{A}=\min_{1\leq k\leq p}r_{k}.$
\end{itemize}
\end{thm}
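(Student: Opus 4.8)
The plan is to mirror the two-part argument that proves Theorem \ref{Cadditive}, transposing it from the varying-coefficient penalty in \eqref{SCAD1} to the additive penalty in \eqref{SCAD2}, and then to control the extra approximation error that enters because $\hat\alpha_k^P$ — not the true $\alpha_k$ — appears in the design of \eqref{SCAD2}. For part (ii), the $L_2$ rate, I would first establish the ``oracle'' bound: restrict \eqref{SCAD2} to the subspace where $\varpi_k^\tau V_k\varpi_k=0$ for $k=p_1+1,\dots,p_1+p_2$ (i.e.\ $\beta_k$ forced linear) and show the resulting penalized least-squares estimator has $\|\hat\beta_k^P-\beta_k\|_{L_2}^2=O_p(K_{k,A}T^{-1}+K_{k,A}^{-2r_{k,A}})$. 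This follows by the same bias/variance decomposition used for the unpenalized Step III estimator in Theorem \ref{beta}: the spline bias contributes $K_{k,A}^{-2r_{k,A}}$ (via $\rho_A$ and the spline-approximation assumption on $\beta_k$), the stochastic error contributes $K_{k,A}/T$ through the eigenvalue bounds on the empirical Gram matrix of the scaled bases $\{\psi_{kl}\}$ (Assumptions (A7)--(A9)), and the penalty term is negligible once $\vartheta_T\to0$ because $p_{\mu_T}$ vanishes at rate $\mu_T$ on the relevant scale and $\vartheta_T/\mu_T\to0$. Crucially, the error $\tilde\varepsilon_t^P$ carries the terms $\alpha_k(t/T)-\hat\alpha_k^P(t/T)$, which by part (ii) of Theorem \ref{Cadditive} are $O_p(K_CT^{-1}+\underline K_C^{-2d_C})$ in the empirical $L_2$ sense; under the joint assumptions (A1)--(A10) this is of the same or smaller order, so it does not inflate the stated rate.

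For part (i), the selection consistency, I would argue by contradiction following the Fan--Li \cite{fan2001} device: suppose that for some $k\in\{p_1+1,\dots,p_1+p_2\}$ the minimizer $\hat\varpi_k$ has $\|\hat\beta_k^{P\prime\prime}\|_{L_2}>0$ with non-vanishing probability. Examine the gradient (or the one-sided directional derivative, since SCAD is singular at $0$) of the objective in \eqref{SCAD2} along the direction that shrinks $\varpi_k^\tau V_k\varpi_k$ toward $0$. The least-squares part of this derivative is $O_p(\vartheta_T)$ — this is exactly where the rate $\vartheta_T=K_A^{1/2}T^{-1/2}+K_A^{-r_A}$ enters, combining the stochastic fluctuation $K_A^{1/2}T^{-1/2}$ and the spline-bias $K_A^{-r_A}$ in estimating a truly-quadratic-or-higher object by splines — while the penalty part is bounded below by $T\,p'_{\mu_T}(K_A^{-3/2}\cdot o(1))\cdot(\text{curvature factor})$, which by $\liminf_{T}\liminf_{w\to0+}p'_{\mu_T}(w)>0$ dominates. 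Since $\vartheta_T/\mu_T\to0$, the penalty derivative swamps the data-fit derivative, so the objective is strictly decreasing as $\|\hat\beta_k^{P\prime\prime}\|_{L_2}\to0$, forcing the minimizer onto the linear subspace with probability tending to $1$. Combining (i) with the oracle bound of the previous paragraph, restricted to the correctly-identified submodel, gives (ii) on the full parameter and hence the displayed rate, with $\underline K_A$ and $r_A$ appearing because the component-wise rates are taken at their worst over $k=1,\dots,p$.

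The main obstacle I anticipate is \emph{propagating the pre-estimation error cleanly}: the design matrix in \eqref{SCAD2} is built from $\hat\alpha_k^P$, which already absorbs the Stage-I penalization error, the Step-II spline error, and the Step-I segmentation error $\rho_A^2/N_T$. I would need a uniform bound showing that replacing $\hat\alpha_k^P$ by $\alpha_k$ in the empirical Gram matrix $\frac1T\sum_t \hat\alpha_j^P(t/T)\hat\alpha_k^P(t/T)\Psi_j(X_{t,T}^{(j)})\Psi_k(X_{t,T}^{(k)})^\tau$ perturbs its eigenvalues only by $o_p(1)$, so that the well-conditioning needed for the variance bound survives — this is where Assumptions (A9)--(A10) and Theorem \ref{Cadditive}(ii) must be invoked carefully, and where the requirement $r_k\ge2$ (genuine nonlinearity margin) is used to ensure the curvature factor multiplying $p'_{\mu_T}$ in the selection argument stays bounded away from $0$. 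The remaining steps — the bias/variance split, the eigenvalue bounds on $V_k$ and the scaled B-spline Gram matrices, and the negligibility of cross terms — are routine given the machinery already developed for Theorems \ref{beta} and \ref{Cadditive}.
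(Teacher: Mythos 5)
Your plan matches the paper's (omitted) proof, which the authors obtain by transposing the proof of Theorem \ref{Cadditive}: selection consistency follows from comparing the penalized objective along a perturbation in the penalized direction, where the data-fit change of order $\vartheta_{T}$ is dominated by the penalty change because $\vartheta_{T}/\mu_{T}\to 0$ and $\liminf_{T}\liminf_{w\to 0+}p'_{\mu_{T}}\left(w\right)/\mu_{T}>0$, while the $L_{2}$ rate follows from a local-minimizer-in-a-$\vartheta_{T}$-ball argument around the best spline approximant, with the plug-in error from $\hat{\alpha}_{k}^{P}$ in the design controlled exactly as you describe. The only inessential differences are that the paper establishes the rate directly for the full penalized problem (rather than via the oracle submodel combined with part (i)) and phrases the sparsity step as a difference of objective values rather than a directional derivative; also note that $r_{k}\ge 2$ is a smoothness requirement on the components being shrunk to linear (it enters through $\vartheta_{T}$), not a nonlinearity margin on the retained ones.
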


{\bf Remark 6:}
Theorems \ref{Cadditive} and  \ref{Cvarying} show that  the penalized spline estimators of varying-coefficient component function $\alpha_{k}$
and additive component function $\beta_{k}$ both have the same   $L_{2}$  rate of convergence as that of the spline estimator of univariate nonparametric function.

\section{Numerical Studies} 
We consider two  simulation examples to illustrate the finite sample performance of the proposed
three-step spline estimation method and two-stage model selection procedure, respectively.

\subsection{Simulation Examples}

\begin{example}
The data are generated from the varying-coefficient additive model as follows
\begin{equation*}
\left\{
\begin{aligned}
Y_{t,T} = &  \alpha_{0}\left(t/T\right)+\alpha_{1}\left(t/T\right)\beta_{1}(X_{t,T}^{\left(1\right)})
+\alpha_{2}\left(t/T\right)\beta_{2}(X_{t,T}^{\left(2\right)})+\varepsilon_{t}\\
X_{t,T}^{\left(1\right)} = &0.6\left(t/T\right)X_{t-1,T}^{\left(1\right)}+0.5\zeta_{1,t},\\
X_{t,T}^{\left(2\right)} = & 0.9\left(t/T\right)X_{t-1,T}^{\left(2\right)}-0.6\left(t/T\right)^2X_{t-2,T}^{\left(2\right)} +0.5\zeta_{2,t},
\end{aligned}
\right.
 \end{equation*}
 where $\{\varepsilon_t\},$ $\{\zeta_{1,t}\}$ and $\{\zeta_{2,t}\}$ are iid standard normal variables and
 \begin{equation*}
\begin{aligned}
\alpha_{0}\left(u\right)=&1.5u+2\cos{\left(2\pi u\right) },\\
\alpha_{1}\left(u\right)=&\left(2u\sin{\left(2\pi u\right)}+1\right)/\parallel 2u\sin{\left(2\pi u\right)}+1\parallel_{L_{2}},\\
\alpha_{2}\left(u\right)=&\{3\left(1-u\right)^2\cos{\left(2\pi u\right)}+1\}/\parallel 3\left(1-u\right)^2\cos{\left(2\pi u\right)}+1\parallel_{L_{2}},\\
\beta_{1}\left(x_{1}\right)=&0.7\sin{\left(\frac{\pi }{2}x_{1}\right)}-0.5x_{1}\left(2-x_{1}\right)^{2},\\
\beta_{2}\left(x_2\right)=&2x_2cos\left(\frac{\pi }{2}x_2\right)-3.5\sin{\left(\frac{\pi}{2} x_{2}\right)}.
\end{aligned}
\end{equation*}
\end{example}

 To appraise the performance of the proposed three-step spline estimators,
 we use the mean integrated squared error (MISE) based on $Q=500$ Monte Carlo
 replications, that is
 \[MISE\left(\alpha_{k}\right)=\frac{1}{Q}\sum_{q=1}^{Q}\int\big[\hat{\alpha}_{k,q}\left(u\right)
 -\alpha_{k}\left(u\right)\big]^{2}\mathrm{d}u,\ \ k=0,1,2,\]
 \[MISE\left(\beta_{k}\right)=\frac{1}{Q}\sum_{q=1}^{Q}\int\big[\hat{\beta}_{k,q}\left(x_{k}\right)
 -\beta_{k}\left(x_{k}\right)\big]^{2}\mathrm{d}x_{k},\ \ k=1,2,\]
where $\hat{\alpha}_{k,q}\left(\cdot\right)$ and $\hat{\beta}_{k,q}\left(\cdot\right)$ are estimators of
$\alpha_{k}\left(\cdot\right)$ and $\beta_{k}\left(\cdot\right),$ respectively, in the $q$-th Monto Carlo sample.

The univariate nonparametric functions are approximated by B-spline of order of three or the quadratic splines. We consider the  sample size $T=300,600,900,$  the  number of interior knots $K=3,4$ and the segmentation length  $I=25,30$ in Step I  estimation.  We run the simulation for 500 times, and find out the MISE of three-step estimators decreases
as the sample size increases, regardless the values of $K$ and $I$.

Table \ref{tag:1} only gives the results of $K=4$ for different combinations of $I$ and $T$.
In addition, we list the MISE of oracle estimators, which refer to the spline estimator of $\alpha_{k}$
given all additive component functions are known in advance,
or correspondingly, the spline estimator of $\beta_{k}$ given all varying-coefficient component functions are known in advance.  As expected, MISE of oracle estimators for varying-coefficient components and additive components are better than
those of the proposed three-step spline estimators.
The last two columns in Table \ref{tag:1} depict the MISE of spline estimators for nonparametric component functions
in misspecified varying-coefficient model and misspecified additive model.
We note that they are discernibly larger than three-step spline estimators in varying-coefficient additive model.
\begin{table}[htbp]
\centering
\caption{Comparison of MISE of different estimators in Example 1 ($K=4$)}
\label{tag:1}
\vspace{3mm}
\begin{tabular}{cc|ccccc}
\hline
\hline
$T$ &$I$&$Function$ &Spline &Oracle&Varying-coefficient &Additive\\
\hline
\multirow{10}{*}{300} &\multirow{5}{*}{25}
&$\alpha_{0}$&0.1263&0.0379&0.5508&2.6448 \\
&&$\alpha_{1}$&0.0847&0.0197&3.8537&-\\
&&$\alpha_{2}$&0.0078&0.0070&3.9886&-\\
&&$\beta_{1}$&0.0896&0.0890&-&0.9831\\
&&$\beta_{2}$&0.0684&0.0644&-&0.5289\\
\cline{3-7}
&\multirow{5}{*}{30}
&$\alpha_{0}$&0.0961&0.0379&0.5508&2.6448 \\
&&$\alpha_{1}$&0.0925&0.0197&3.8537&-\\
&&$\alpha_{2}$&0.0077&0.0070&3.9886&-\\
&&$\beta_{1}$&0.0902&0.0890&-&0.9831\\
&&$\beta_{2}$&0.0695&0.0644&-&0.5289\\
\cline{1-7}
\multirow{10}{*}{600}   &\multirow{5}{*}{25}
&$\alpha_{0}$&0.0557&0.0224&0.2370&2.7392 \\
&&$\alpha_{1}$&0.0158&0.0138&3.8691&-\\
&&$\alpha_{2}$&0.0049&0.0045&3.9949&-\\
&&$\beta_{1}$&0.0486&0.0476&-&0.4024\\
&&$\beta_{2}$&0.0480&0.0452&-&0.4468\\
\cline{3-7}
&\multirow{5}{*}{30}
&$\alpha_{0}$&0.0478&0.0224&0.2370&2.7392 \\
&&$\alpha_{1}$&0.0139&0.0138&3.8691&-\\
&&$\alpha_{2}$&0.0049&0.0045&3.9949&-\\
&&$\beta_{1}$&0.0484&0.0476&-&0.4024\\
&&$\beta_{2}$&0.0493&0.0452&-&0.4468\\
\cline{1-7}
\multirow{10}{*}{900} &\multirow{5}{*}{25}
&$\alpha_{0}$ &0.0392&0.0172&0.2847&2.8345\\
&&$\alpha_{1}$&0.0107&0.0101&3.9651&-\\
&&$\alpha_{2}$&0.0041&0.0037&3.9945&-\\
&&$\beta_{1}$&0.0450&0.0442&-&0.4210\\
&&$\beta_{2}$&0.0377&0.0351&-&0.3983\\
\cline{3-7}
& \multirow{5}{*}{30}
&$\alpha_{0}$&0.0353&0.0172&0.2847&2.8345 \\
&&$\alpha_{1}$&0.0105&0.0101&3.9651&-\\
&&$\alpha_{2}$&0.0041&0.0037&3.9945&-\\
&&$\beta_{1}$&0.0450&0.0442&-&0.4210\\
&& $\beta_{2}$&0.0374&0.0351&-&0.3983\\
\hline\hline
\end{tabular}
\end{table}

To visualize the performance of three-step estimation method, we consider $T=500,K=3 \ \text{and}\  I=25,$
and approximate the unknown functions using three order B-spline functions.
Figures \ref{fig:1} and  \ref{fig:2} presents the estimated additive component function $\beta_{k}\left(\cdot\right),k=1,2,$
and  the estimated  varying-coefficient functions $\alpha_{k}\left(\cdot\right),$ $k=0,1,2$, respectively.
They both show that the  proposed three-step estimation method can approximate  the true function well
even for a moderate sample size.
\begin{figure}[h!]
\centering
\begin{tabular}{@{}c@{}c}
\includegraphics[width=0.53\linewidth]{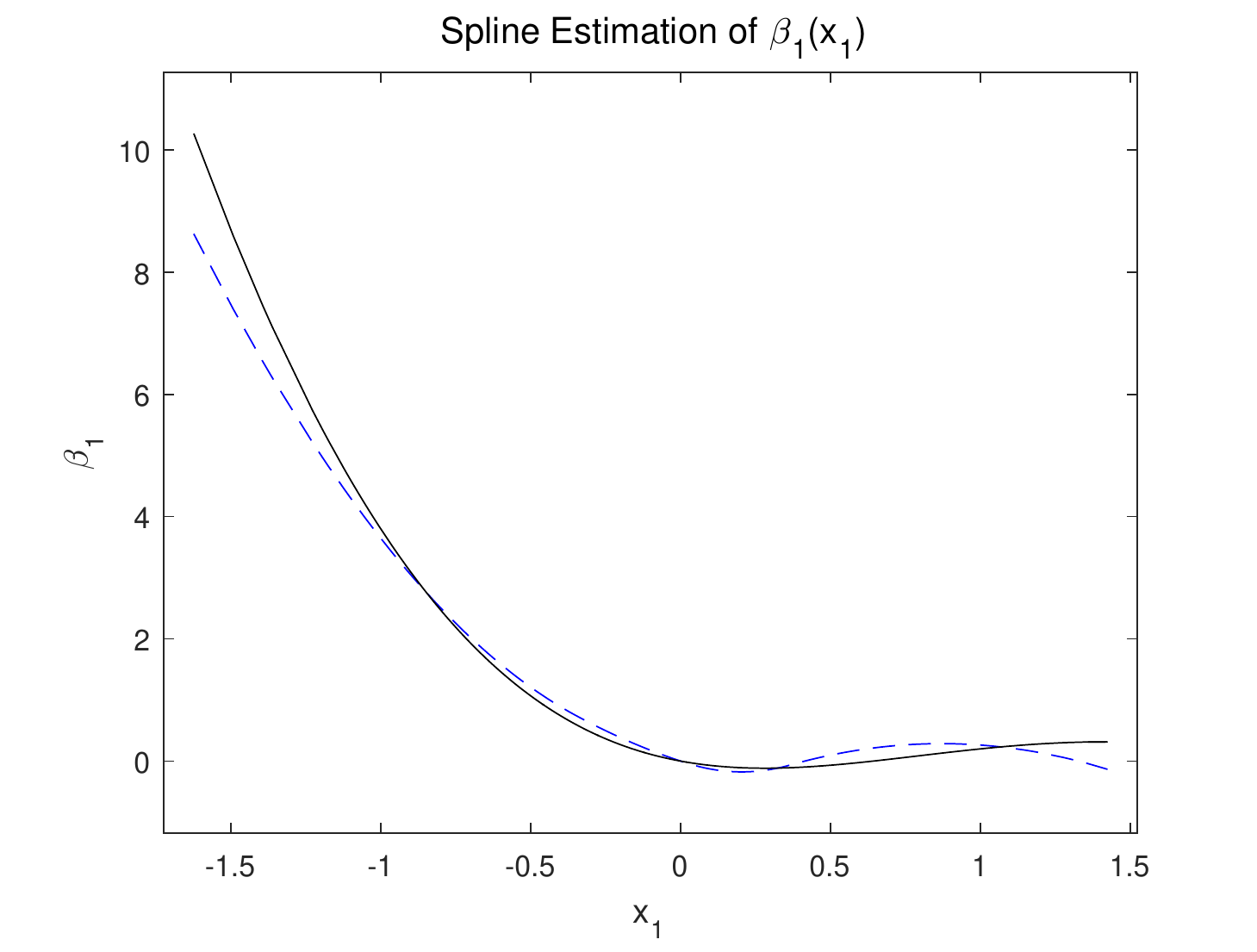} \ &
\includegraphics[width=0.53\linewidth]{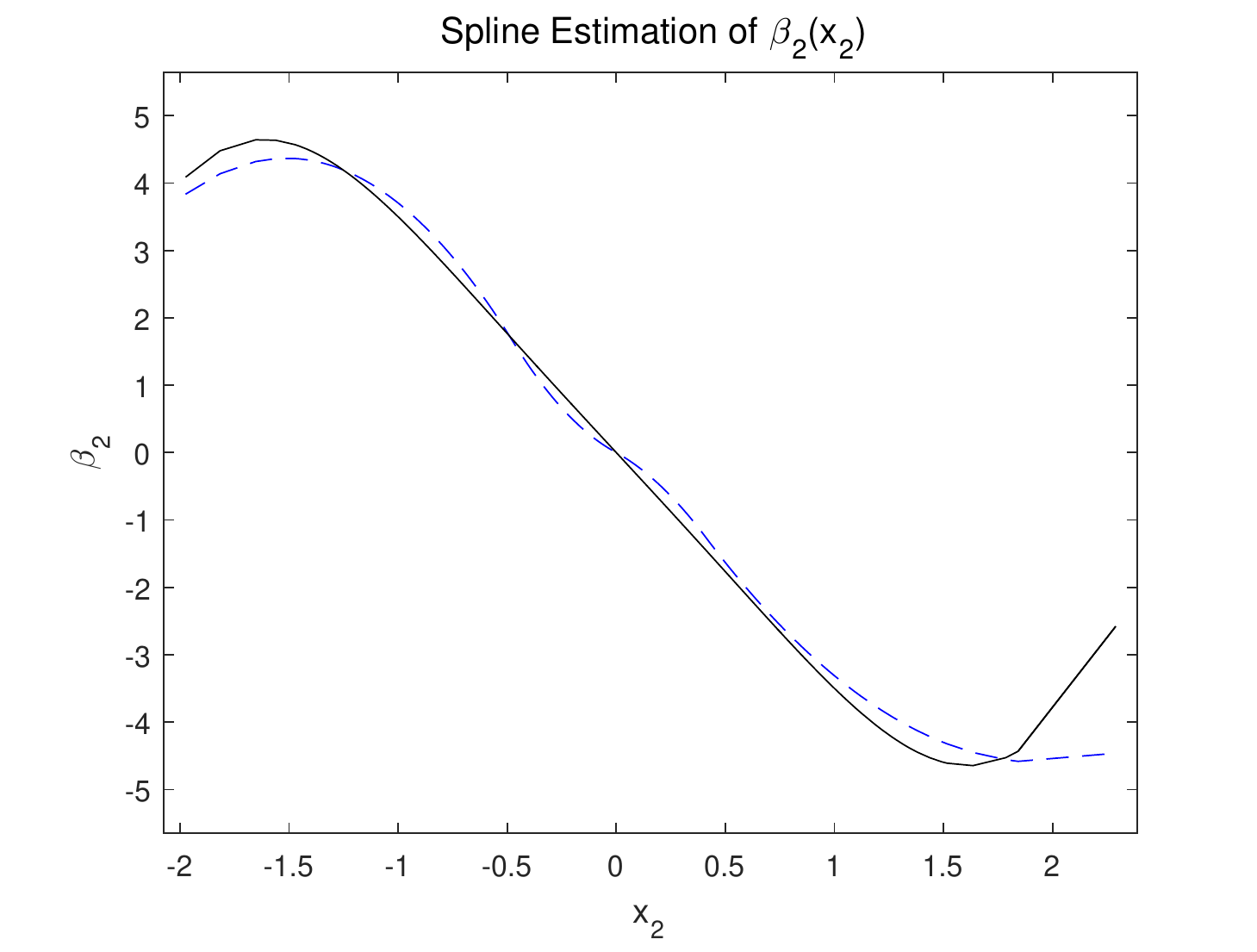} \ \\
{\small(a)} & {\small(b)} \
\end{tabular}
\caption{Estimation of  additive component functions.
(a) \textrm{-} (b) Additive component functions $\beta_{k}\left(\cdot\right),k=1,2$ (solid black curve)
and its estimation $\hat{\beta}_{k}\left(\cdot\right)$ (dashed blue curve).
\label{fig:1}}
\end{figure}

\begin{figure}[hbpt]
\centering
\begin{tabular}{@{}c@{}c@{}c}
\includegraphics[width=0.35\linewidth]{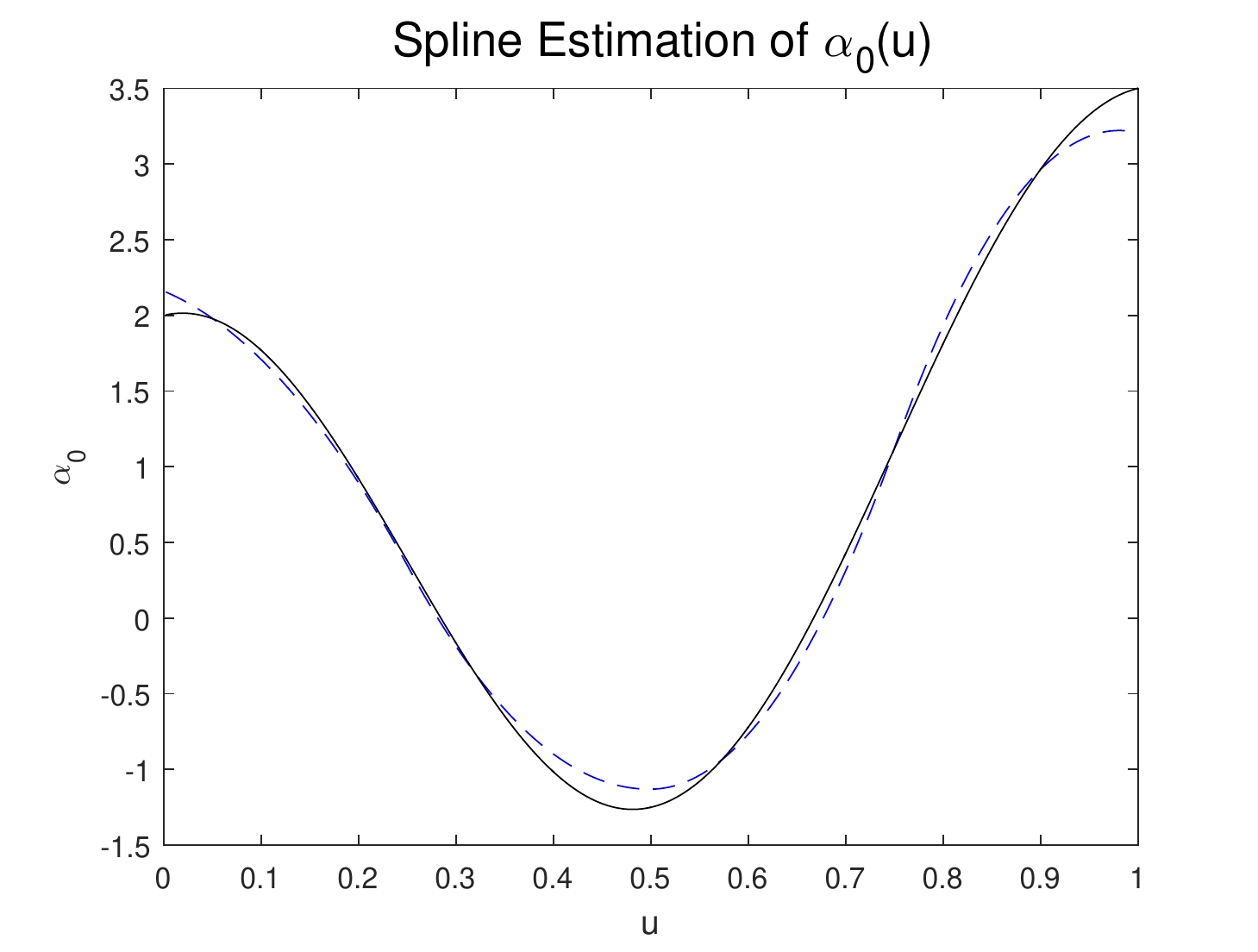} \ &
\includegraphics[width=0.35\linewidth]{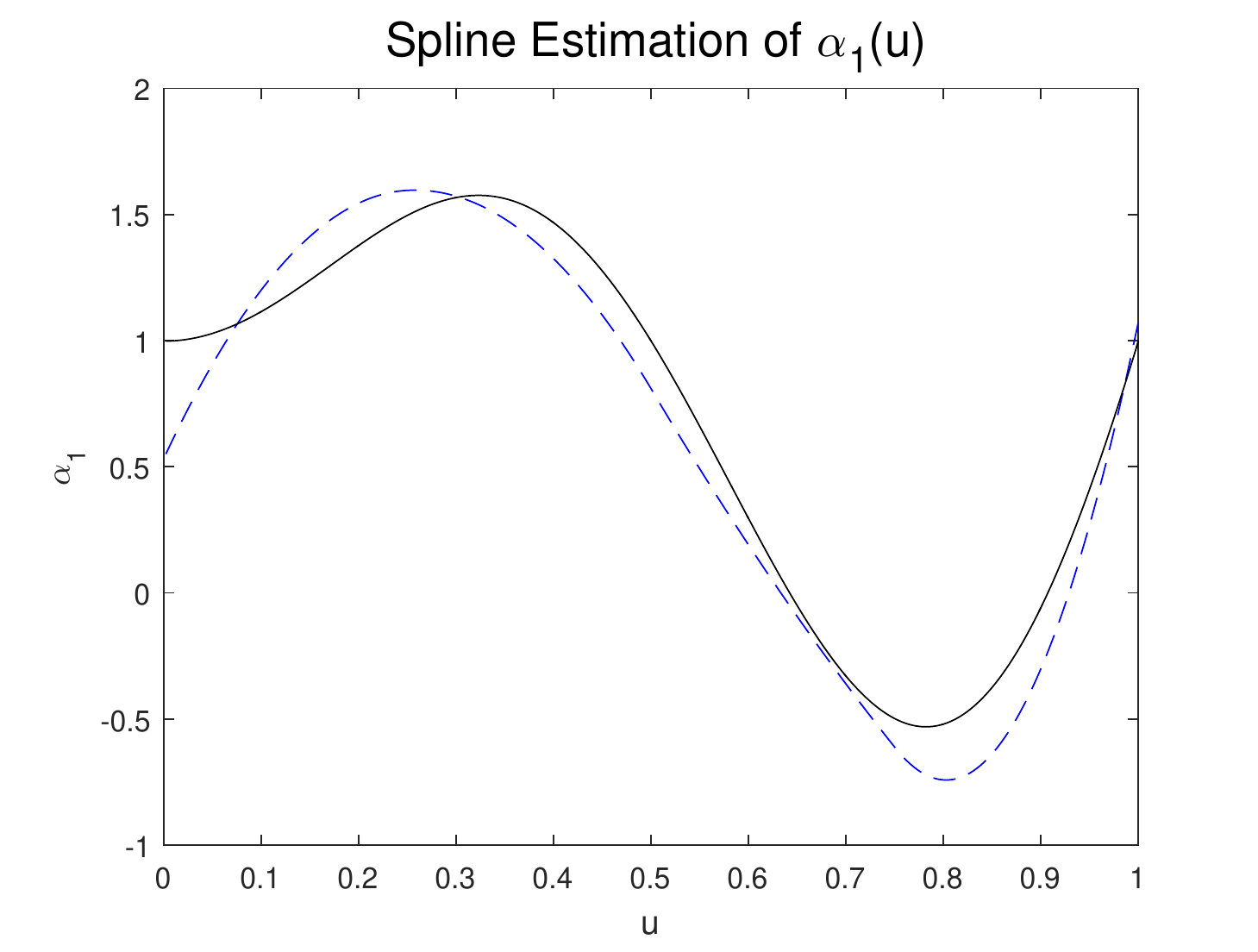} \ &
\includegraphics[width=0.35\linewidth]{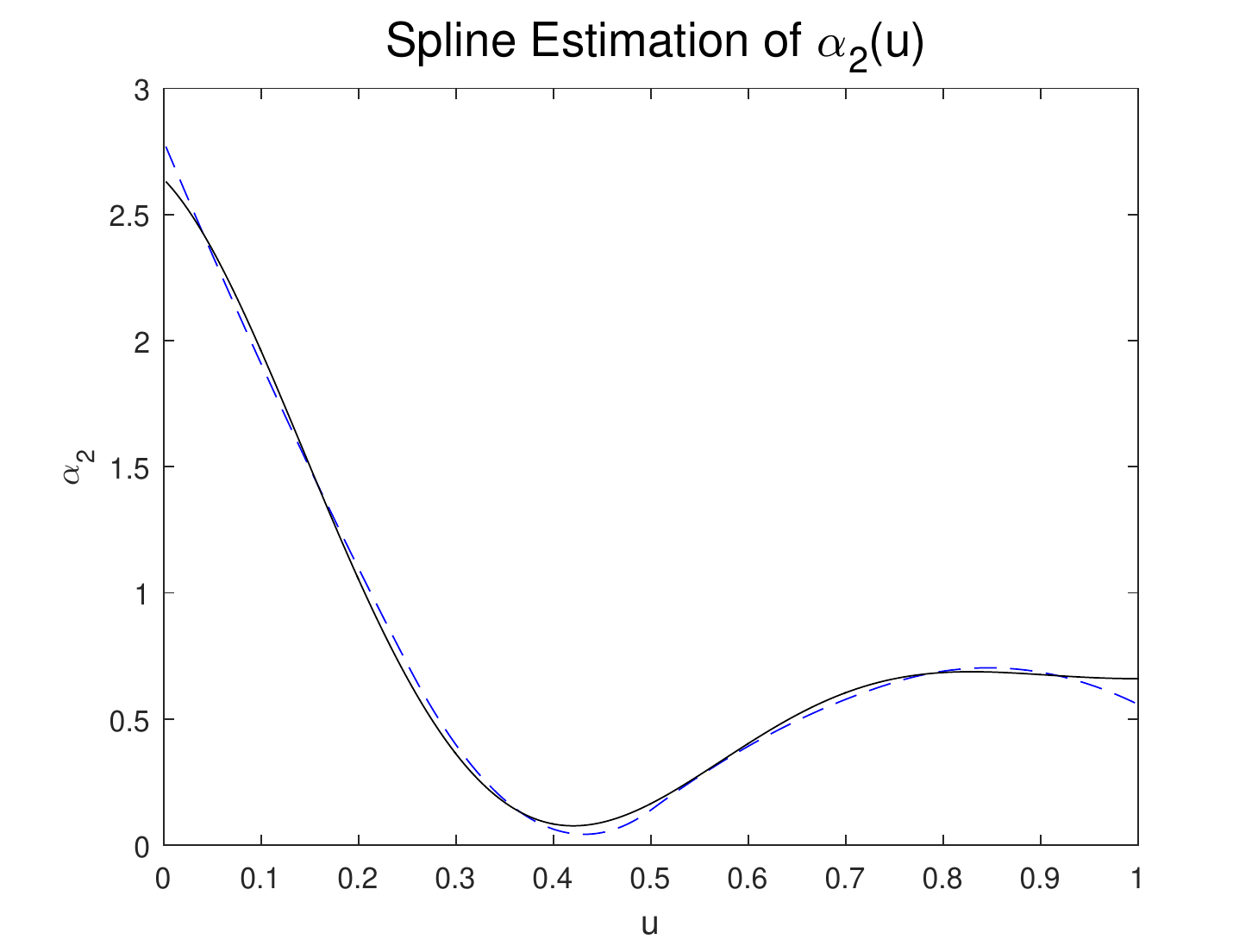} \ \\
{\small(c)} & {\small(d)} & {\small(e)}\
\end{tabular}
 \caption{Estimation of varying-coefficient  component functions.
 (c) \textrm{-} (e)Varying-coefficient  component functions $\alpha_{k}\left(\cdot\right),k=0,1,2$ (solid black curve) and their
 spline estimators $\hat{\alpha}_{k}$ (dashed blue curve).
 \label{fig:2}}
\end{figure}

\begin{example}
We consider a varying-coefficient additive model with additive term and varying-coefficient term.
\begin{equation*}
\begin{aligned}
Y_{t,T} = &  \alpha_{0}\left(t/T\right)+\sum_{k=1}^{4}\alpha_{k}\left(t/T\right)\beta_{k}(X_{t,T}^{\left(k\right)})+\varepsilon_{t}\\
X_{t,T}^{\left(1\right)} = &0.7\left(t/T\right)X_{t-1,T}^{\left(1\right)}-0.5\left(t/T\right)^2X_{t-2,T}^{\left(1\right)} +0.5\xi_{1,t},\\
X_{t,T}^{\left(2\right)} = & 0.8\left(t/T\right)X_{t-1,T}^{\left(2\right)}-0.2\left(t/T\right)^2X_{t-2,T}^{\left(2\right)} +0.5\xi_{2,t},\\
X_{t,T}^{\left(3\right)} = & 0.6\left(t/T\right)X_{t-1,T}^{\left(3\right)}-0.3\left(t/T\right)^2X_{t-2,T}^{\left(3\right)} +0.5\xi_{3,t},\\
X_{t,T}^{\left(4\right)} =& 0.6\left(t/T\right)X_{t-1,T}^{\left(4\right)}+0.5\xi_{4,t},
\end{aligned}
 \end{equation*}
 where $\{\varepsilon_t\},$ $\{\xi_{k,t}\},k=1,2,3,4,$ are iid standard normal variables and
 $a_{k}\left(u\right),k=0,1,2,$ are given in Example 1,  $\alpha_{3}\left(u\right)=1,$
%
\begin{equation*}
\begin{aligned}
\alpha_{4}\left(u\right)=&\{3u\left(1-u\right)^2+1\}/\parallel 3u\left(1-u\right)^2+1\parallel_{L_{2}},\\
\beta_{1}\left(x_{1}\right)=&0.7sin\left(\frac{\pi }{2}x_{1}\right)-0.5x_{1}\left(2-x_{1}\right)^2,\\
\beta_{2}\left(x_{2}\right)=&3x_{2}\cos{\left(\frac{\pi }{2}x_{2}\right)}-0.8\sin{\left(\frac{\pi }{2}x_{2}\right)},\\
\end{aligned}
\end{equation*}
$\beta_{3}\left(x_{3}\right)=2x_{3}\left(1+x_{3}\right)$ and $\beta_{4}\left(x_{4}\right)=x_{4}.$
\end{example}
To investigate the performance of the proposed two-stage model selection procedure, we take the sample size $T=300,600,900,$ segment length $I=30$, and the same interior knots $K=3$ for all univariate functions.
Based on 100 replications, Table \ref{tag:2} lists the MISE of three-step spline estimators (UMISE), the SCAD penalized spline estimators (PMISE)
and oracle estimators (OMISE) which are obtained by assuming the true model structure is known.
The smoothing parameter  $\left(\lambda_{T},\mu_{T}\right)$ is chosen according to the BIC criterion described in Section 4
and the number in parenthesis is the standard deviation of corresponding MISE.

Table \ref{tag:3} compares the performance of correct-fitting(C-F), over-fitting(O-F) and under-fitting(U-F)
the pure varying-coefficient terms (i.e., the additive component function is linear), the pure additive terms (i.e., the varying-coefficient component function is constant)
and the true model.
The results show  the numbers of correctly identifying additive terms, varying-coefficient terms and true model become larger
as the sample size increases.
\begin{table}[h!] 
\centering
\caption{Comparison of MISE  in Example 2 }
\label{tag:2}
\vspace{3mm}
\begin{tabular}{cccccc}
\hline
\hline
$T$ &$\left(\lambda_{T},\mu_{T}\right)$ &$Function$ &UMISE &PMISE&OMISE \\
\hline
\multirow{9 }{*}{300} &\multirow{9 }{*}{$\left(0.0686,0.15\right)$}
&$\alpha_{0}$&0.2284 \scriptsize{(0.1954)}&0.0900\scriptsize{(0.0615)}&0.0351\scriptsize{(0.0206)} \\
&&$\alpha_{1}$&0.0378\scriptsize{(0.0231)}&0.0260\scriptsize{(0.0166)}&0.0239\scriptsize{(0.0151)}\\
&&$\alpha_{2}$&0.1419\scriptsize{(0.0967)}&0.0571\scriptsize{(0.0406)}&0.0387\scriptsize{(0.0269)}\\
&&$\alpha_{3}$&0.0104\scriptsize{(0.0074)}&0.0044\scriptsize{(0.0081)}&--\\
&&$\alpha_{4}$&0.1142\scriptsize{(0.1160)}&0.0729\scriptsize{(0.0589)}&0.0550\scriptsize{(0.0358)}\\
&&$\beta_{1}$&0.0340\scriptsize{(0.0235)}&0.0310\scriptsize{(0.0224)}&0.0269\scriptsize{(0.0189)}\\
&&$\beta_{2}$&0.0698\scriptsize{(0.0455)}&0.0492\scriptsize{(0.0369)}&0.0313\scriptsize{(0.0288)}\\
&&$\beta_{3}$&0.0288\scriptsize{(0.0226)}&0.0288\scriptsize{(0.0244)}&0.0265\scriptsize{(0.0222)}\\
&&$\beta_{4}$&0.0303\scriptsize{(0.0241)}&0.0167\scriptsize{(0.0258)}&--\\
\hline
\multirow{9 }{*}{600} &\multirow{9 }{*}{$\left(0.053,0.1286\right)$}
&$\alpha_{0}$&0.1635 \scriptsize{(0.1666)}&0.0307\scriptsize{(0.0201)}&0.0160\scriptsize{(0.0074)} \\
&&$\alpha_{1}$&0.0114\scriptsize{(0.0064)}&0.0087\scriptsize{(0.0054)}&0.0083\scriptsize{(0.0057)}\\
&&$\alpha_{2}$&0.0769\scriptsize{(0.0679)}&0.0295\scriptsize{(0.0199)}&0.0171\scriptsize{(0.0111)}\\
&&$\alpha_{3}$&0.0074\scriptsize{(0.0045)}&0.0019\scriptsize{(0.0039)}&--\\
&&$\alpha_{4}$&0.0381\scriptsize{(0.0287)}&0.0312\scriptsize{(0.0214)}&0.0285\scriptsize{(0.0207)}\\
&&$\beta_{1}$&0.0195\scriptsize{(0.0142)}&0.0181\scriptsize{(0.0127)}&0.0175\scriptsize{(0.0126)}\\
&&$\beta_{2}$&0.0536\scriptsize{(0.0365)}&0.0321\scriptsize{(0.0276)}&0.0183\scriptsize{(0.0097)}\\
&&$\beta_{3}$&0.0134\scriptsize{(0.0092)}&0.0124\scriptsize{(0.0092)}&0.0121\scriptsize{(0.0090)}\\
&&$\beta_{4}$&0.0119\scriptsize{(0.0096)}&0.0049\scriptsize{(0.0104)}&--\\
\hline
\multirow{9 }{*}{900} &\multirow{9 }{*}{$\left(0.06,0.15\right)$}
&$\alpha_{0}$&0.0993 \scriptsize{(0.1047)}&0.0220\scriptsize{(0.0177)}&0.0114\scriptsize{(0.0054)} \\
&&$\alpha_{1}$&0.0111\scriptsize{(0.0053)}&0.0066\scriptsize{(0.0034)}&0.0063\scriptsize{(0.0032)}\\
&&$\alpha_{2}$&0.0409\scriptsize{(0.0235)}&0.0144\scriptsize{(0.0108)}&0.0113\scriptsize{(0.0073)}\\
&&$\alpha_{3}$&0.0046\scriptsize{(0.0031)}&0.0002\scriptsize{(0.0012)}&--\\
&&$\alpha_{4}$&0.0215\scriptsize{(0.0139)}&0.0205\scriptsize{(0.0110)}&0.0182\scriptsize{(0.0113)}\\
&&$\beta_{1}$&0.0150\scriptsize{(0.0089)}&0.0108\scriptsize{(0.0068)}&0.0098\scriptsize{(0.0062)}\\
&&$\beta_{2}$&0.0215\scriptsize{(0.0118)}&0.0205\scriptsize{(0.0125)}&0.0178\scriptsize{(0.0094)}\\
&&$\beta_{3}$&0.0081\scriptsize{(0.0054)}&0.0078\scriptsize{(0.0052)}&0.0075\scriptsize{(0.0047)}\\
&&$\beta_{4}$&0.0079\scriptsize{(0.0054)}&0.0015\scriptsize{(0.0025)}&--\\
\hline\hline
\end{tabular}
\end{table}
\begin{table}[h!]
\centering
\caption{Performance of Model identification in Example 2}
\label{tag:3}
\vspace{3mm}
\begin{tabular}{cccccccccc}
\hline
\hline
\multirow{2}{*}{$T$}
 & \multicolumn{3}{c}{Additive terms}
& \multicolumn{3}{c}{Varying-coefficient terms}
& \multicolumn{3}{c}{True model}\\
\cmidrule(lr){2-4}\cmidrule(lr){5-7}\cmidrule(lr){8-10}
&C-F & O-F &U-F
&C-F & O-F &U-F
&C-F & O-F &U-F\\
\hline
300&71&12&17&75&0&25&54&38&8\\
600&80&7&13&88&0&12&70&24&6\\
900&92&6&2&94&0&6&86&8&6\\
\hline\hline
\end{tabular}
\end{table}

\appendix

\section{Appendix }
\subsection{Assumption Sets}
Let $|\mathbf{a}|$ be the Euclidean norm of a real valued vector $\mathbf{a}.$
Denote the space of $l$-order smooth functions defined on $[0,1]$ as
$C^{\left(l\right)}[0,1]=\{m|m^{\left(l\right)}\in C[0,1]\}$ and the class of Lipschitz continuous functions for some fixed constant $C>0$ as
$Lip\left([0,1],C\right)=\{m||m\left(x\right)-m\left(x'\right)|\leq C|x-x'|,\forall x,x'\in[0,1]\}.$\\

The necessary conditions to prove asymptotic properties are listed as below.

\begin{itemize}
\item[ (A1)] The process $\{\mathbf{X}_{t,T}\}$ is stationary locally in time, that is, for each rescaled time point $u\in[0,1],$
there exists a strictly stationary process $\{\mathbf{X}_{t}\left(u\right)\}$ such that
$| \mathbf{X}_{t,T}-\mathbf{X}_{t}\left(u\right)|\leq\{|\frac{t}{T}-u|+\frac{1}{T}\}U_{t,T}\left(u\right)$ a.s.
with $E[U_{t,T}\left(u\right)^{\rho}]\leq C$ for some $\rho\ge2$ and $C<\infty$ independent of $u,t,$ and $T.$
\item[(A2)] At each rescaled time point $u\in[0,1],$ the joint density function $f_{u}\left(\mathbf{x}\right)$ of the stationary approximation process
$\mathbf{X}_{t}\left(u\right)$ is  bounded below and above  uniformly on $u\in[0,1]:$
\[0<c_{f}\leq\inf_{\mathbf{x}\in[0,1]^{p}}f_{u}\left(\mathbf{x}\right)\leq\sup_{\mathbf{x}\in[0,1]^{p}}f_{u}\left(\mathbf{x}\right)\leq C_{f}
\ \text{uniformly on}\ u\in[0,1].\] Meantime, $\mathbf{X}_{t,T}$ has density function with respective to certain measure.
\item[(A3)] $\varepsilon_{t}$ iid, and $E\varepsilon_{t}=0,$ $E\varepsilon_{t}^2=1.$
Given $\mathbf{X}_{t,T},$ $\varepsilon_{t}\sim WN\left(0,1\right).$
\item[(A4)] There exists positive constants $K_{0}$ and $\lambda$ such that $\alpha\left(k\right)\leq K_{0} \exp^{-\lambda k}$  for all $k\ge 1,$
where $\alpha\left(\cdot\right)$ is  the $\alpha$-mixing coefficients for process
$\big\{\mathcal{Z}_t:=\left(\mathbf{X}_{t,T}^\tau, \varepsilon _t\right)^\tau,t=1,\cdots,T\big\},$
and defined as
\[\alpha\left(k\right)=\sup_{B\in\sigma\{\mathcal{Z}_s,s\leq t\}, C\in \sigma\{\mathcal{Z}_s,s\ge t+k\}}
|P\left(B\cap C\right)-P\left(B\right)P\left(C\right)|,\ k\ge 1.\]
\item[(A5)] The conditional standard deviation function $\sigma$ is bounded below and above uniformly on $u\in[0,1],$ i.e.,
\[c_{\sigma}\leq\inf_{\mathbf{x}\in[0,1]^{p}}\sigma\left(u,\mathbf{x}\right)
\leq \sup_{\mathbf{x}\in[0,1]^{p}}\sigma\left(u,\mathbf{x}\right)\leq C_{\sigma}\ \text{uniformly on}\  u\in[0,1] \]
for some positive constants $c_{\sigma}$ and  $C_{\sigma}.$
\item[(A6)] $\alpha_{k}\in C^{\left(d_{k}-1\right)}[0,1]$ and $\alpha_{k}^{\left(d_{k}-1\right)}\in Lip([0,1],C_{1,k})$
where $d_{k}$ is  an integer such that $1\leq d_{k}\leq q_{k}.$
\item[(A7)]  $\beta_{k}\in C^{\left(r_{k}-1\right)}[0,1]$ and $\beta_{k}^{\left(r_{k}-1\right)}\in Lip([0,1],C_{2,k})$
where $r_{k}$ is  an integer such that $ 1\leq r_{k}\leq p_{k}.$
\item[(A8)] The knots for $\alpha_{k}\left(\cdot\right),k=0,\cdots,p,$
\begin{equation*}
\begin{aligned}
0=\eta_{k,1-q_{k}}=&\cdots=\eta_{k,0}<\eta_{k,1}<\cdots<\eta_{k,K_{k,C}}<\eta_{k,K_{k,C}+1}\\=&\cdots=\eta_{k,K_{k,C}+q_{k}}=1
\end{aligned}
\end{equation*}
and the knots for $\beta_{k}\left(\cdot\right),k=1,\cdots,p ,$
\begin{equation*}
\begin{aligned}
0=\tau_{k,1-p_{k}}=&\cdots=\tau_{k,0}<\tau_{k,1}<\cdots<\tau_{k,K_{k,A}}<\tau_{k,K_{k,A}+1}\\=&\cdots=\tau_{k,K_{k,A}+p_{k}}=1
\end{aligned}
\end{equation*}
has bounded mesh ratio:
\[\limsup_{T\to\infty}\max_{0\leq k\leq p}\frac{\max_{1\leq l\leq K_{k,C}+1}\{\eta_{k,l}-\eta_{k,l-1}\}}{\min_{1\leq l\leq K_{k,C}+1}\{\eta_{k,l}-\eta_{k,l-1}\}}<\infty.\]
\[\limsup_{T\to\infty}\max_{1\leq k\leq p}\frac{\max_{1\leq l\leq K_{k,A}+1}\{\tau_{k,l}-\tau_{k,l-1}\}}{\min_{1\leq l\leq K_{k,A}+1}\{\tau_{k,l}-\tau_{k,l-1}\}}<\infty,\]

\item[(A9)] $\limsup_{T\to\infty}\left(K_{A}/\min_{1\leq k\leq p} K_{k,A}\right)<\infty$, $K_{A}/T\to 0,$
$K_{A}\log{T}/\sqrt{T}\to 0$ and $\sqrt{T}K_{A}\to\infty.$
\item[(A10)] $\limsup_{T\to\infty}\left(K_{C}/\min_{0\leq k\leq p} K_{k,C}\right)<\infty$ and $K_{C}/T\to 0.$
\end{itemize}

{\bf Remark 7:}
Assumption (A1) specifies a data generating process (GDP).
Assumptions (A2) - (A5) are standard in time-series context, see \cite{vogt2012nonparametric,wang2007spline}.
Assumptions (A6) - (A8) are common in typical spline approximation literature, for instance, \cite{zhang2015functional}.

\subsection{Proofs for  Main Theorems} 
\medskip

Firstly, we need some lemmas before proving main theorems.
Let $\mathbf{g}=\left(g_{1},\cdots,g_{p}\right)^{\tau}$ and $\mathbf{h}=\left(h_{1},\cdots,h_{p}\right)^{\tau}$
be  any two-vector valued function. Define empirical inner product
\[\langle\mathbf{g},\mathbf{h}\rangle_{T}=\frac{1}{T}\sum_{t=1}^{T}\sum_{k=1}^{p}g_{k}\left(X_{t,T}^{\left(k\right)}\right)
h_{k}\left(X_{t,T}^{\left(k\right)}\right),\]
and theoretical inner product
\[\langle\mathbf{g},\mathbf{h}\rangle=\sum_{k=1}^{p}\int_{0}^{1}E\big[g_{k}\left(X_{t}^{\left(k\right)}\left(u\right)\right)
h_{k}\left(X_{t}^{\left(k\right)}\left(u\right)\right)\big]\mathrm{d}u,\]
Denote the induced norm by $\langle\mathbf{g},\mathbf{h}\rangle_{T}$ and $\langle\mathbf{g},\mathbf{h}\rangle$
as $\parallel\cdot\parallel_{T}$ and $\parallel\cdot\parallel,$ respectively.
In addition to, $\parallel\mathbf{g}\parallel_{L_{2}}^{2}=\sum_{k=1}^{p}\parallel g_{k}\parallel_{L_{2}}^{2}.$
Given sequences of positive numbers $a_{n}$ and $b_{n},$ $a_{n}\preceq b_{n}$ means $a_{n}/b_{n}$ is bounded and
$a_{n}\asymp b_{n}$ means $a_{n}\preceq b_{n}$ and $b_{n}\preceq a_{n}$ hold.
\begin{lemma}\label{inducenorm}
Let $g_{k}\left(z_{k}\right)=\sum_{l=1}^{J_{k,A}}\gamma_{kl}\psi_{kl}\left(z_{k}\right)$ and
$\gamma_{k}=\left(\gamma_{k1},\cdots,\gamma_{kJ_{k,A}}\right)^{\tau}$ for $k=1,\cdots,p.$ Denote
$\mathbf{\gamma}=\left(\gamma_{1}^{\tau},\cdots,\gamma_{p}^{\tau}\right)^{\tau}$ and
$\mathbf{g}\left(\mathbf{z}\right)=\left(g_{1}\left(z_{1}\right),\cdots,g_{p}\left(z_{p}\right)\right)^{\tau}.$
Then $\parallel \mathbf{g}\parallel^2\asymp\parallel \mathbf{g}\parallel_{L_{2}}^2 \asymp|\mathbf{\gamma}|^2$
holds under Assumption (A2).
\end{lemma}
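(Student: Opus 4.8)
\textbf{Proof plan for Lemma \ref{inducenorm}.}

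The plan is to establish the chain of norm equivalences in two links: first $\parallel\mathbf{g}\parallel_{L_{2}}^{2}\asymp|\mathbf{\gamma}|^{2}$, which is a purely deterministic fact about scaled B-spline bases, and then $\parallel\mathbf{g}\parallel^{2}\asymp\parallel\mathbf{g}\parallel_{L_{2}}^{2}$, which uses the density bounds in Assumption (A2) together with the locally stationary structure. For the first link, recall that the $\psi_{kl}=J_{k,A}^{1/2}B_{kl,A}$ are the normalized B-spline basis functions, whose stated ``nice properties'' in the Appendix include the fact that the Gram matrix $G_{k}^{\ast}=\{\int_{0}^{1}\psi_{kj}\psi_{kj'}\}$ has eigenvalues bounded away from $0$ and $\infty$ uniformly in $k$ and $T$ (this is the classical de Boor estimate for B-splines with bounded mesh ratio, Assumption (A8)). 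Hence $\parallel g_{k}\parallel_{L_{2}}^{2}=\gamma_{k}^{\tau}G_{k}^{\ast}\gamma_{k}\asymp|\gamma_{k}|^{2}$, and summing over $k=1,\dots,p$ (with $p$ fixed) gives $\parallel\mathbf{g}\parallel_{L_{2}}^{2}=\sum_{k}\parallel g_{k}\parallel_{L_{2}}^{2}\asymp\sum_{k}|\gamma_{k}|^{2}=|\mathbf{\gamma}|^{2}$.

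For the second link, I would expand the theoretical inner product: by definition
\[
\parallel\mathbf{g}\parallel^{2}=\sum_{k=1}^{p}\int_{0}^{1}E\big[g_{k}\big(X_{t}^{(k)}(u)\big)^{2}\big]\,\mathrm{d}u
=\sum_{k=1}^{p}\int_{0}^{1}\!\!\int_{0}^{1} g_{k}(z)^{2} f_{u}^{(k)}(z)\,\mathrm{d}z\,\mathrm{d}u,
\]
where $f_{u}^{(k)}$ is the marginal density of the $k$th coordinate of the stationary approximation $\mathbf{X}_{t}(u)$. Assumption (A2) bounds the joint density $f_{u}$ between $c_{f}$ and $C_{f}$ uniformly in $u$; integrating out the other coordinates over $[0,1]^{p-1}$ shows each marginal $f_{u}^{(k)}$ is likewise bounded between two positive constants uniformly in $u$ and $k$. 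Therefore
\[
c_{f}'\parallel g_{k}\parallel_{L_{2}}^{2}\le \int_{0}^{1}\!\!\int_{0}^{1} g_{k}(z)^{2} f_{u}^{(k)}(z)\,\mathrm{d}z\,\mathrm{d}u\le C_{f}'\parallel g_{k}\parallel_{L_{2}}^{2},
\]
and summing over $k$ gives $\parallel\mathbf{g}\parallel^{2}\asymp\parallel\mathbf{g}\parallel_{L_{2}}^{2}$. Combining the two links yields $\parallel\mathbf{g}\parallel^{2}\asymp\parallel\mathbf{g}\parallel_{L_{2}}^{2}\asymp|\mathbf{\gamma}|^{2}$.

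The only genuinely delicate point — and the step I expect to need the most care — is the uniformity of the constants in the first link: one must invoke the B-spline property that the normalized Gram matrices are uniformly well-conditioned \emph{across} all $T$ and all $k$, which is exactly what Assumptions (A8) (bounded mesh ratio) and (A9) guarantee, rather than a bound that degrades as $K_{k,A}\to\infty$. Everything else is bookkeeping: the reduction from the joint density bound to marginal density bounds is immediate, and the passage from the per-coordinate equivalences to the vector statement is just finite additivity since $p$ is fixed. Note the stationary approximation $\mathbf{X}_{t}(u)$, not the triangular array $\mathbf{X}_{t,T}$ itself, enters the \emph{theoretical} norm, so Assumption (A1) is not needed for this lemma — only (A2) (and the spline assumptions implicitly, through the basis properties quoted in the Appendix).
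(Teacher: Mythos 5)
Your proposal is correct and follows essentially the same route as the paper: the paper's one-line chain $\parallel\mathbf{g}\parallel^{2}\asymp\parallel\mathbf{g}\parallel_{L_{2}}^{2}\asymp|\mathbf{\gamma}|^{2}$ is justified there by exactly the two ingredients you isolate, namely the density bounds of Assumption (A2) (to pass between the theoretical and $L_{2}$ norms) and the uniform well-conditioning of the scaled B-spline Gram matrix (the ``property (iv)'' the paper cites, resting on the bounded mesh ratio). Your additional remarks on the uniformity of the Gram-matrix constants and on (A1) being unnecessary here are accurate elaborations rather than deviations.
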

\begin{proof} In combination with Assumption (A2) and the property (iv) of $\{\psi_{kl}\},$ we have
\begin{equation*}
\begin{aligned}
\parallel \mathbf{g}\parallel^{2}=&\sum_{k=1}^{p}\int_{0}^{1}E\big[g_{k}^{2}(X_{t}^{\left(k\right)}\left(u\right))\big]\mathrm{d}u
=\sum_{k=1}^{p}\int_{0}^{1}E\big[\sum_{l=1}^{J_{k,A}}\gamma_{kl}\psi_{kl}(X_{t}^{\left(k\right)}\left(u\right))\big]^{2}\mathrm{d}u\\
\asymp&\sum_{k=1}^{p}\int_{0}^{1}\parallel g_{k}\parallel^{2}_{L_{2}}\mathrm{d}u=\parallel\mathbf{g}\parallel_{L_{2}}^{2}
\asymp\sum_{k=1}^{p}|\gamma_{k}|^{2}=|\mathbf{\gamma}|^{2}
\end{aligned}
\end{equation*}
\end{proof}

\begin{lemma}\label{inner}
Let $\mathcal{G}$ be the collection of vector valued functions $\mathbf{g}=\left(g_{1},\cdots,g_{p}\right)^{\tau}$
with $g_{k}\in G_{k}$ such that $\parallel g_{k}\parallel_{L_{2}}<\infty$  for $k=1,\cdots,p,$
where $G_{k}$ is defined in Section 5.1. Then under Assumption (A1), (A2), (A4) and (A9), as $T\to\infty,$
\[\sup_{\mathbf{g}_{1},\mathbf{g}_{2}\in\mathcal{G}}\frac{|\langle \mathbf{g}_{1},\mathbf{g}_{2}\rangle_{T}
-\langle \mathbf{g}_{2},\mathbf{g}_{2}\rangle|}{\parallel  \mathbf{g}_{1}\parallel\cdot \parallel  \mathbf{g}_{2}\parallel}
=O_p\left(\frac{K_{A}\log{T}}{T}\right)=o_p\left(1\right).\]
\end{lemma}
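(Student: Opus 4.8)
The plan is to reduce the stated uniform bound to a finite-dimensional statement about B-spline coefficient vectors, and then apply an exponential inequality for $\alpha$-mixing sequences. By Lemma~\ref{inducenorm}, for any $\mathbf{g}_{1},\mathbf{g}_{2}\in\mathcal{G}$ with coefficient vectors $\mathbf{a}=(\mathbf{a}_{1}^{\tau},\dots,\mathbf{a}_{p}^{\tau})^{\tau}$ and $\mathbf{b}=(\mathbf{b}_{1}^{\tau},\dots,\mathbf{b}_{p}^{\tau})^{\tau}$, we have $\parallel\mathbf{g}_{1}\parallel\asymp|\mathbf{a}|$ and $\parallel\mathbf{g}_{2}\parallel\asymp|\mathbf{b}|$. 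Writing $g_{k,i}(x)=\sum_{l}a_{kl}\psi_{kl}(x)$ etc., the difference $\langle\mathbf{g}_{1},\mathbf{g}_{2}\rangle_{T}-\langle\mathbf{g}_{1},\mathbf{g}_{2}\rangle$ is a bilinear form $\mathbf{a}^{\tau}(\widehat{M}-M)\mathbf{b}$, where $\widehat{M}$ is the block-diagonal empirical Gram matrix with blocks $\widehat{M}_{k}=\frac{1}{T}\sum_{t=1}^{T}\Psi_{k}(X_{t,T}^{(k)})\Psi_{k}(X_{t,T}^{(k)})^{\tau}$ and $M$ is its theoretical counterpart $M_{k}=\int_{0}^{1}E[\Psi_{k}(X_{t}^{(k)}(u))\Psi_{k}(X_{t}^{(k)}(u))^{\tau}]\,\mathrm{d}u$. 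Hence the supremum in the lemma is comparable to the operator norm $\parallel\widehat{M}-M\parallel_{\mathrm{op}}$, and it suffices to show $\parallel\widehat{M}-M\parallel_{\mathrm{op}}=O_{p}(K_{A}\log T/T)$.

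Next I would bound the operator norm entrywise. Each block $\widehat{M}_{k}-M_{k}$ is banded (B-splines of fixed order $p_{k}$ have local support, so $\psi_{kl}\psi_{kl'}\equiv 0$ unless $|l-l'|<p_{k}$), so its operator norm is bounded, up to the fixed band width, by $\max_{l,l'}|(\widehat{M}_{k})_{ll'}-(M_{k})_{ll'}|$. For fixed $(k,l,l')$ the entry $(\widehat{M}_{k})_{ll'}=\frac{1}{T}\sum_{t=1}^{T}\psi_{kl}(X_{t,T}^{(k)})\psi_{kl'}(X_{t,T}^{(k)})$ is an average of a bounded, $\alpha$-mixing triangular array: by property (iv) of the scaled basis $\{\psi_{kl}\}$ each summand is $O(J_{k,A})=O(K_{A})$ in sup-norm and has variance $O(K_{A})$ (the product is supported on an interval of length $O(1/K_{A})$ where it is $O(K_{A}^{2})$), while Assumption~(A1) controls the gap between $X_{t,T}^{(k)}$ and its stationary approximation so that $E(\widehat{M}_{k})_{ll'}=(M_{k})_{ll'}+O(1/T)$. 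Assumption~(A4) gives the geometric mixing needed. Applying a Bernstein-type inequality for $\alpha$-mixing sequences (e.g. the standard block device), one gets, for a suitable constant $c$,
\[
P\!\left(|(\widehat{M}_{k})_{ll'}-(M_{k})_{ll'}|>c\,\frac{K_{A}\log T}{T}\right)\le T^{-A}
\]
for any prescribed $A$, once $K_{A}\log T/\sqrt{T}\to 0$ as in Assumption~(A9), which is exactly the regime making the large-deviation exponent dominate.

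Finally I would take a union bound over all $O(pK_{A})$ nonzero entries (the band structure keeps the count linear in $K_{A}$), which costs only an extra logarithmic factor absorbed into the $T^{-A}$ slack, to conclude $\parallel\widehat{M}-M\parallel_{\mathrm{op}}=O_{p}(K_{A}\log T/T)$ and hence the claimed uniform bound; the final $o_{p}(1)$ statement is then immediate from $K_{A}\log T/T\to 0$, again by Assumption~(A9). The main obstacle is the concentration step: one must track the correct variance proxy $O(K_{A})$ and sup-norm bound $O(K_{A})$ for the summands and feed them into a mixing Bernstein inequality with the right dependence on $K_{A}$, so that the resulting rate is $K_{A}\log T/T$ and not something worse; the banded-matrix reduction and the bias control via (A1) are comparatively routine. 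I would cite a standard reference for the exponential inequality under $\alpha$-mixing (the same one used elsewhere for spline estimators in locally stationary or stationary time series) rather than reproving it.
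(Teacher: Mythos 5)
Your overall architecture is the same as the paper's: reduce the supremum to the coefficient vectors via the norm equivalence of Lemma~\ref{inducenorm}, exploit the banded support of the B-spline products so that only $O(K_{A})$ Gram-matrix entries are nonzero, control the bias between $X_{t,T}^{(k)}$ and its stationary approximation through Assumption (A1), and apply a Bernstein-type inequality under the geometric mixing of (A4). The paper does exactly this, splitting each entry $\langle\psi_{kl},\psi_{kl'}\rangle_{T}-\langle\psi_{kl},\psi_{kl'}\rangle$ into a local-stationarity term of order $K_{A}/T$, a centered-average term handled by Bernstein's inequality, and a Riemann-sum term of order $1/T$.

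The genuine problem is your concentration step. You claim $P\bigl(|(\widehat{M}_{k})_{ll'}-(M_{k})_{ll'}|>c\,K_{A}\log T/T\bigr)\le T^{-A}$, but this rate is not achievable. Each summand $\psi_{kl}(X_{t,T}^{(k)})\psi_{kl'}(X_{t,T}^{(k)})$ has sup-norm $O(K_{A})$ and variance $O(K_{A})$ (as you correctly note), so the average of $T$ such terms has standard deviation of order $\sqrt{K_{A}/T}$ even under independence, and geometric mixing does not improve this order. Under (A9) one has $K_{A}\log^{2}T/T\to 0$, hence $\sqrt{K_{A}/T}\gg K_{A}\log T/T$: the typical fluctuation of a single entry already exceeds your claimed threshold, so no Bernstein inequality can deliver that tail bound. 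What Bernstein actually gives is a per-entry deviation of order $\sqrt{K_{A}\log T/T}$ (the paper states the looser $K_{A}\log T/\sqrt{T}$, which is what its own proof concludes). Either of these suffices for the $o_{p}(1)$ conclusion under (A9), but neither yields the displayed rate $O_{p}(K_{A}\log T/T)$. Note that the paper has the same internal inconsistency --- the lemma is stated with $K_{A}\log T/T$ while the proof establishes $K_{A}\log T/\sqrt{T}$ --- so the stated rate is almost certainly a typographical error; you should prove the achievable rate and observe that it still implies $o_{p}(1)$, rather than attempt to force the stated one, because the concentration bound your argument hinges on is false.
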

\begin{proof} For any $\mathbf{g}^{\left(i\right)}=(g_{1}^{\left(i\right)},\cdots,g_{p}^{\left(i\right)})^{\tau}\in\mathcal{G},i=1,2,$
there exists coefficients $\gamma_{kl}^{\left(1\right)}$
and $\gamma_{kl}^{\left(2\right)},$ $l=1,\cdots,J_{k,A},k=1,\cdots,p,$ such that
\[\mathbf{g}^{\left(1\right)}_{k}\left(z_{k}\right)=\sum_{l=1}^{J_{k,A}}\gamma_{kl}^{\left(1\right)}\psi_{kl}\left(z_{k}\right)\ \ \text{and}\ \
\mathbf{g}^{\left(2\right)}_{k}\left(z_{k}\right)=\sum_{l=1}^{J_{k,A}}\gamma_{kl}^{\left(2\right)}\psi_{kl}\left(z_{k}\right)\]
for $k=1,\cdots,p.$
 It is not difficult to see that
\[
|\langle \mathbf{g}_{1},\mathbf{g}_{2}\rangle_{T}-\langle \mathbf{g}_{1},\mathbf{g}_{2}\rangle|\leq\sum_{k=1}^{p}\sum_{l,l'=1}^{J_{k,A}}|\gamma_{kl}^{\left(1\right)}\gamma_{kl'}^{\left(2\right)}|
\cdot|\langle\psi_{kl},\psi_{kl'}\rangle_{T}-\langle\psi_{kl},\psi_{kl'}\rangle|.
\]
For any given $k=1,\cdots,p,$ let
$l'\in A_{k}\left(l\right)$ if the intersection of the supports of $\psi_{kl}$ and $\psi_{kl'}$ contains an open interval.
That is, $\langle \psi_{kl},\psi_{kl'}\rangle_{T}=\langle \psi_{kl},\psi_{kl'}\rangle=0$ if $l'\notin A_{k}\left(l\right).$
Moreover, it is known $\#\{A_{k}\left(l\right)\}\leq C$ for some constant $C$ and all $l,k.$ Moveover, for any given
$k,l$ and $l'\in A_{k}\left(l\right),$ we have
\begin{equation*}
\begin{aligned}
&|\langle \psi_{kl},\psi_{kl'}\rangle_{T}-\langle \psi_{kl},\psi_{kl'}\rangle|\\
=&|T^{-1}\sum_{t=1}^{T}\psi_{kl}(X_{t,T}^{\left(k\right)})\psi_{kl'}(X_{t,T}^{\left(k\right)})
-\int_{0}^{1}E[\psi_{kl}(X_{t}^{\left(k\right)}\left(u\right))\psi_{kl'}(X_{t}^{\left(k\right)}\left(u\right))]\mathrm{d}u|\\
\leq &T^{-1}\sum_{t=1}^{T}|\psi_{kl}(X_{t,T}^{\left(k\right)})\psi_{kl'}(X_{t,T}^{\left(k\right)})
-\psi_{kl}(X_{t}^{\left(k\right)}(t/T))\psi_{kl'}(X_{t}^{\left(k\right)}(t/T))|\\
+&T^{-1}\sum_{t=1}^{T}|\psi_{kl}(X_{t}^{\left(k\right)}(t/T))\psi_{kl'}(X_{t}^{\left(k\right)}(t/T))
-E[\psi_{kl}(X_{t}^{\left(k\right)}(t/T))\psi_{kl'}(X_{t}^{\left(k\right)}(t/T))]|\\
+&|T^{-1}\sum_{t=1}^{T}E[\psi_{kl}(X_{t}^{\left(k\right)}\left(t/T\right))\psi_{kl'}(X_{t}^{\left(k\right)}\left(t/T\right))]
-\int_{0}^{1}E[\psi_{kl}(X_{t}^{\left(k\right)}\left(u\right))\psi_{kl'}(X_{t}^{\left(k\right)}\left(u\right))]\mathrm{d}u|.
\end{aligned}
\end{equation*}
By Assumption (A1) and the boundness of B-spline, the first term above is bounded by $O_p\left(K_{A}/T\right).$
Employing Berstein's inequality, the second term is bounded by $O_p(K_{A}\log{T}/\sqrt{T}).$
The last term is bounded by $O\left(1/T\right)$  from the integral theory.
Therefore, using Cauchy-Schwartz inequality and Assumption (A9), we obtain that
\begin{equation*}
\begin{aligned}
|\langle \mathbf{g}_{1},\mathbf{g}_{2}\rangle_{T}-\langle \mathbf{g}_{1},\mathbf{g}_{2}\rangle|=&
O_p\left(K_{A}\log{T}/\sqrt{T}\right)\sum_{k=1}^{p}\sum_{l,l'}|\gamma_{kl}^{\left(1\right)}|\cdot
|\gamma_{kl'}^{\left(2\right)}|I\left(l'\in A_{k}\left(l\right)\right)\\
=&O_p\left(K_{A}\log{T}/\sqrt{T}\right)|\mathbf{\gamma}^{\left(1\right)}|\cdot|\mathbf{\gamma}^{\left(2\right)}|,
\end{aligned}
\end{equation*}
where $\mathbf{\gamma}^{\left(1\right)}$ and $\mathbf{\gamma}^{\left(2\right)}$ denote the vectors with entries
$\gamma_{kl}^{\left(1\right)}$ and $\gamma_{kl}^{\left(2\right)},$ respectively.
By Lemma \ref{inducenorm}, we see $|\mathbf{\gamma}^{\left(i\right)}|\asymp\parallel\mathbf{g}_{i}\parallel,i=1,2$
which completes the proof.
\end{proof}
\begin{lemma} \label{eigenvalue}
Under Assumption (A1), (A2)  (A4) and (A9),  as $T\to\infty,$
\begin{itemize}
\item[(i)] For each $s=1,\cdots,N_{T},$ $I_{T}^{-1}\sum_{t=1}^{I_{T}}V_{st}V_{st}^\tau$ has eigenvalues bounded away from 0 and $\infty$
with probability tending to one;
\item[(ii)] $T^{-1}\sum_{t=1}^{T}\Psi_{k}(X_{t,T}^{\left(k\right)})\Psi_{k}^{\tau}(X_{t,T}^{\left(k\right)})$
has eigenvalues bounded away from 0 and $\infty,$ with probability tending to one.
\end{itemize}
\end{lemma}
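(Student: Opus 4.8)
\textbf{Proof proposal for Lemma \ref{eigenvalue}.}

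The plan is to reduce both statements to a single underlying fact: the Gram matrix of the scaled B-spline basis, evaluated at the locally stationary design points, is uniformly close to its population counterpart, which itself has well-separated eigenvalues. I would treat part (ii) first since it is the cleaner instance, and then observe that part (i) follows from essentially the same argument localized to a single group, with $I_T$ playing the role of the sample size.

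For part (ii), fix $k$ and write $\widehat{G}_k = T^{-1}\sum_{t=1}^{T}\Psi_k(X_{t,T}^{(k)})\Psi_k^{\tau}(X_{t,T}^{(k)})$. For any coefficient vector $\gamma_k$ with $|\gamma_k|=1$, set $g_k(z)=\gamma_k^{\tau}\Psi_k(z)=\sum_l \gamma_{kl}\psi_{kl}(z)$, so that $\gamma_k^{\tau}\widehat{G}_k\gamma_k = T^{-1}\sum_{t=1}^T g_k^2(X_{t,T}^{(k)}) = \|g_k\|_{T,k}^2$, the empirical seminorm restricted to the $k$th coordinate. By Lemma \ref{inducenorm} (applied coordinatewise, or with only the $k$th component nonzero), the population quantity satisfies $\|g_k\|^2 \asymp \|g_k\|_{L_2}^2 \asymp |\gamma_k|^2 = 1$, so the eigenvalues of the population Gram matrix $G_k = \int_0^1 E[\Psi_k(X_t^{(k)}(u))\Psi_k^{\tau}(X_t^{(k)}(u))]\,\mathrm{d}u$ are bounded away from $0$ and $\infty$. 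It then remains to control $\sup_{|\gamma_k|=1}|\gamma_k^{\tau}(\widehat{G}_k - G_k)\gamma_k|$. This is exactly the one-coordinate version of Lemma \ref{inner}: taking $\mathbf{g}_1=\mathbf{g}_2$ to be the vector whose $k$th entry is $g_k$ and whose other entries vanish, Lemma \ref{inner} gives
\[
\sup_{|\gamma_k|=1}\bigl|\gamma_k^{\tau}(\widehat{G}_k - G_k)\gamma_k\bigr| = O_p\!\left(\frac{K_A\log T}{T}\right) = o_p(1)
\]
under Assumptions (A1), (A2), (A4) and (A9). By Weyl's inequality the eigenvalues of $\widehat{G}_k$ are within $o_p(1)$ of those of $G_k$, hence bounded away from $0$ and $\infty$ with probability tending to one; taking a union over the finitely many indices $k=1,\dots,p$ preserves this.

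For part (i), the matrix $V_{st}$ collects the values at the $t$th observation in group $s$ of the full regressor vector used in \eqref{minimize1}, i.e. the constant together with $\{\psi_{kl}(X_{t_{sj},T}^{(k)})\}_{k,l}$. The argument is identical in structure: the population Gram matrix built from $\{1\}\cup\{\psi_{kl}\}$ has bounded-away eigenvalues by Lemma \ref{inducenorm} together with the fact that the spline basis is not asymptotically collinear with the constant (the property (iv) of the scaled B-splines, plus Assumption (A2)), and the group-wise empirical Gram matrix $I_T^{-1}\sum_{t=1}^{I_T}V_{st}V_{st}^{\tau}$ concentrates around it. The deviation bound is the same Bernstein-plus-local-stationarity estimate carried out inside Lemma \ref{inner}, but now with $I_T$ observations: the error becomes $O_p(K_A\log I_T/\sqrt{I_T})$ uniformly, and since by construction $I_T \asymp T/N_T$ with $I_T \to \infty$ fast enough relative to $K_A$ (this is where the constraint $1+(K+m_1)p < I_T$ and Assumption (A9) enter), this is still $o_p(1)$; a union bound over the $N_T$ groups, whose count grows only polynomially in $T$, costs an extra $\log$ factor that is absorbed. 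The main obstacle is the uniformity over all $N_T$ groups simultaneously: one must check that the concentration rate, after inflating the failure probability by $N_T$ via a union bound, still tends to zero, which requires that $I_T$ not be allowed to shrink too fast — precisely the role of the knot/segment-length conditions. Once that bookkeeping is in place, Weyl's inequality again transfers the eigenvalue bounds from the population to the empirical matrices.
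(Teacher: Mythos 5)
Your argument is essentially the paper's own proof: both reduce the eigenvalue bounds to the norm equivalence $\parallel\mathbf{g}\parallel_{T}^{2}\asymp\parallel\mathbf{g}\parallel^{2}\asymp|\mathbf{\gamma}|^{2}$ furnished by Lemma \ref{inducenorm} and Lemma \ref{inner}, the only cosmetic difference being that you work coordinatewise with the $k$th block while the paper bounds the full stacked Gram matrix and then restricts to its $k$th diagonal block. Your extra care for part (i) --- the union bound over the $N_{T}$ groups and the requirement that $I_{T}$ grow fast enough relative to $K_{A}$ --- is a welcome elaboration of a step the paper dismisses as ``similar,'' and does not change the substance of the argument.
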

\begin{proof} We only show (ii), the proof of (i) is similar.
For any given vector $\mathbf{\gamma}=\left(\gamma_{1}^{\tau},\cdots,\gamma_{p}^{\tau}\right)^{\tau}$
with $\gamma_{k}=\left(\gamma_{k1},\cdots,\gamma_{kJ_{k,A}}\right)^{\tau},$ $k=1,\cdots ,p,$
 let $g_{k}\left(z_{k}\right)=\sum_{l=1}^{J_{k,A}}\gamma_{kl}\psi_{kl}\left(z_{k}\right)$
and $\mathbf{g}\left(\mathbf{z}\right)=\left(g_{1}\left(z_{1}\right),\cdots,g_{p}\left(z_{p}\right)\right)^{\tau},$
where $\mathbf{z}=\left(z_{1},\cdots,z_{p}\right).$
 Denote $\tilde{\Psi}\left(\mathbf{z}\right)=\left(\Psi_{1}^{\tau}\left(z_{1}\right),\cdots,\Psi_{p}^{\tau}\left(z_{p}\right)\right)^{\tau},$ then
 by Lemma \ref{inducenorm} and  \ref{inner},
 \[T^{-1}\mathbf{\gamma}^{\tau}\tilde{\Psi}\{\mathbf{X}_{t,T}\}\tilde{\Psi}^{\tau}\{\mathbf{X}_{t,T}\}\mathbf{\gamma}=\parallel\mathbf{g}\parallel_{T}^{2}
\asymp\parallel\mathbf{g}\parallel^{2}\asymp|\mathbf{\gamma}|^2,\]
which implies $T^{-1}\sum_{t=1}^{T}\tilde{\Psi}\{\mathbf{X}_{t,T}\}\tilde{\Psi}^{\tau}\{\mathbf{X}_{t,T}\}$ has eigenvalues bounded away from 0 and $\infty.$
Therefore, $T^{-1}\sum_{t=1}^{T}\Psi_{k}\{X_{t,T}^{\left(k\right)}\}\Psi^{\tau}_{k}\{X_{t,T}^{\left(k\right)}\}$ also has eigenvalues bounded away from 0 and $\infty.$
 \end{proof}

\textbf{Proof for Proposition 1.}\\
Let $V_{sj}=\big\{1,\psi_{11}(X_{t_{sj},T}^{\left(1\right)}),\cdots,
\psi_{1J_{1,A}}(X_{t_{sj},T}^{\left(1\right)}),\cdots,\psi_{pJ_{p,A}}(X_{t_{sj},T}^{\left(p\right)})\big\}^{\tau},$ 
then
\[
\mathbf{\hat{\alpha}}_{s}=\left(\hat{C}_{0s},\hat{h}_{1}^{\left(s\right)\tau},\cdots,\hat{h}_{p}^{\left(s\right)\tau}\right)^{\tau}
=\Big\{\sum_{j=1}^{I_{T}}V_{sj}V_{sj}^{\tau}\Big\}^{-1}\sum_{j=1}^{I_{T}}V_{sj}Y_{t_{sj}}.
\]
Define $\omega_{s}\left(\mathbf{x}\right)=C_{0s}+\sum_{k=1}^{p}\beta_{k}^{\left(s\right)}\left(x_{k}\right),$
$\omega_{s,j}=\omega_s(\mathbf{X}_{t_{sj}})$
and \[\mathbf{\tilde{\alpha}}_{s}=\{\tilde{C}_{0s}, \tilde{h}_{1}^{\left(s\right)\tau},\cdots,\tilde{h}_{p}^{\left(s\right)\tau}\}^{\tau}
 =\Big\{\sum\limits_{j=1}^{I_{T}}V_{sj}V_{sj}^{\tau}\Big\}^{-1}\sum\limits_{j=1}^{I_{T}}V_{sj}\omega_{s,j}.\]
 Denote \[\tilde{\beta}_{k}^{\left(s\right)}\left(x_{k}\right)=\Psi_{k}\left(x_{k}\right)^{\tau}\tilde{h}_{k}^{\left(s\right)},\ \ \ \  \check{\beta}^{\left(s\right)}_{k}\left(x_{k}\right)=\Psi_{k}\left(x_{k}\right)^{\tau}\hat{h}_{k}^{\left(s\right)},\]
and $\tilde{\gamma}_{k}\left(x_{k}\right)=\frac{1}{N_{T}}\sum_{t=1}^{N_{T}}\tilde{\beta}_{k}^{\left(s\right)}\left(x_{k}\right),$
$\check{\gamma}_{k}\left(x_{k}\right)=\frac{1}{N_{T}}\sum_{s=1}^{N_{T}}\check{\beta}^{\left(s\right)}_{k}\left(x_{k}\right).$
Note that $\hat{\gamma}_{k}\left(x_{k}\right)=\check{\gamma}_{k}\left(x_{k}\right)-\check{\gamma}_{k}\left(0\right),$
by Cauchy-Schwartz  inequality,  we have
 \begin{equation*}
 \begin{aligned}
 \parallel\hat{\gamma}_{k}-\gamma_{k}\parallel^2_{L_{2}}
 \leq &4\parallel\check{\gamma}_{k}-\tilde{\gamma}_{k}\parallel^2_{L_{2}}
+4\parallel\tilde{\gamma}_{k}-\gamma_{k}\parallel^2_{L_{2}} \\
&+4|{\check\gamma}_{k}\left(0\right)-\tilde{\gamma}_{k}\left(0\right)|^2+4|\gamma_{k}\left(0\right)-\tilde{\gamma}_{k}\left(0\right)|^2.
 \end{aligned}
 \end{equation*}
 It suffices to deal with the approximation error terms
  $\parallel\tilde{\gamma}_{k}-\gamma_{k}\parallel^2_{L_{2}}$
 and stochastic error terms $\parallel\check{\gamma}_{k}-\tilde{\gamma}_{k}\parallel^2_{L_{2}}.$

\medskip

\textbf{ Approximate error terms:} We will show the rate of approximation error
\renewcommand{\theequation}{A.\arabic{equation}}
\begin{equation}\label{theorem11}
\parallel\tilde{\gamma}_{k}-\gamma_{k}\parallel^2_{L_{2}}=O\left(\rho_{A}^2/N_{T}\right),\ \ \
|\tilde{\gamma}_{k}\left(0\right)-\gamma_{k}\left(0\right)|^2=O\left(\rho_{A}^2/N_{T}\right).
\end{equation}
Note that $\gamma_{k}\left(x_{k}\right)=\sum_{s=1}^{N_{T}}\beta_{k}^{\left(s\right)}\left(x_{k}\right)/N_{T},$ we have
\begin{equation*}
\begin{aligned}
\parallel\tilde{\gamma}_{k}-\gamma_{k}\parallel^2_{L_{2}}=&\parallel N_{T}^{-1}\sum_{s=1}^{N_{T}}[
\tilde{\beta}_{k}^{\left(s\right)}\left(x_{k}\right)-\beta_{k}^{\left(s\right)}\left(x_{k}\right)]\parallel^2_{L_{2}}\\
&\preceq \frac{1}{N_{T}^2}\sum_{s=1}^{N_{T}}
\parallel\tilde{\beta}_{k}^{\left(s\right)}\left(x_{k}\right)-\beta_{k}^{\left(s\right)}\left(x_{k}\right)\parallel^2_{L_{2}}.
\end{aligned}
\end{equation*}
By the definition of $\rho_{A},$ there exists $\breve{h}_{k}^{\left(s\right)}=(\breve{h}_{kl}^{\left(s\right)},l=1,\cdots,J_{k,A})^{\tau}$
and $\breve{\beta}_{k}^{\left(s\right)}\left(x_{k}\right)=\Psi_{k}\left(x_{k}\right)^{\tau}\breve{h}_{k}^{\left(s\right)}$ such that
$\sup_{x_{k}\in[0,1]}\big|\breve{\beta}_{k}^{\left(s\right)}\left(x_{k}\right)-\beta_{k}^{\left(s\right)}\left(x_{k}\right)\big|=O\left(\rho_{A}\right).$
Therefore,
\begin{equation*}
\begin{aligned}
&\parallel\tilde{\gamma}_{k}-\gamma_{k}\parallel^2_{L_{2}}\\
\preceq &\frac{1}{N_{T}^2}\sum_{s=1}^{N_{T}}\parallel\tilde{\beta}_{k}^{\left(s\right)}\left(x_{k}\right)-\breve{\beta}_{k}^{\left(s\right)}\left(x_{k}\right)\parallel^2_{L_{2}}
+\frac{1}{N_{T}^2}\sum_{s=1}^{N_{T}}\parallel \breve{\beta}_{k}^{\left(s\right)}\left(x_{k}\right)-\beta_{k}^{\left(s\right)}\left(x_{k}\right)\parallel^2_{L_{2}}\\
=&\frac{1}{N_{T}^2}\sum_{s=1}^{N_{T}}\parallel\tilde{\beta}_{k}^{\left(s\right)}\left(x_{k}\right)-\breve{\beta}_{k}^{\left(s\right)}\left(x_{k}\right)\parallel^2_{L_{2}}
+O\left(\frac{\rho_{A}^2}{N_{T}}\right).
\end{aligned}
\end{equation*}
Let $\mathbf{\breve{\alpha}}^{\left(s\right)}=(C_{0s},\breve{h}_{1}^{\left(s\right)},\cdots,\breve{h}_{p}^{\left(s\right)})^{\tau},$ then
\begin{equation*}
\parallel\tilde{\beta}_{k}^{\left(s\right)}\left(x_{k}\right)-\breve{\beta}_{k}^{\left(s\right)}\left(x_{k}\right)\parallel^2_{L_{2}}
=\parallel\Psi_{k}\left(x_{k}\right)^T(\tilde{h}_{k}^{\left(s\right)}-\breve{h}_{k}^{\left(s\right)})\parallel^2_{L_{2}}
\asymp|\tilde{h}_{k}^{\left(s\right)}-\breve{h}_{k}^{\left(s\right)}|^{2}
\leq|\mathbf{\tilde{\alpha}}^{\left(s\right)}-\mathbf{\breve{\alpha}}^{\left(s\right)}|^{2}.
\end{equation*}
On the one hand,
$I_{T}^{-1}\sum_{j=1}^{I_{T}}[V_{sj}^{\tau}(\mathbf{\tilde{\alpha}}^{\left(s\right)}-\mathbf{\breve{\alpha}}^{\left(s\right)})]^2
\asymp|\mathbf{\tilde{\alpha}}^{\left(s\right)}-\mathbf{\breve{\alpha}}^{\left(s\right)}|^2$ by Lemma \ref{eigenvalue}.
On the other hand, $\sum_{j=1}^{I_{T}}V_{sj}\left(\omega_{s,j}-V_{sj}^{\tau}\mathbf{\tilde{\alpha}}^{\left(s\right)}\right)=0$ ensures that
\begin{equation*}
\begin{aligned}
&\frac{1}{I_{T}}\sum_{j=1}^{I_{T}}[V_{sj}^{\tau}(\mathbf{\tilde{\alpha}}^{\left(s\right)}-\breve{\alpha}^{\left(s\right)})]^{2}
\leq \frac{1}{I_{T}}\sum_{j=1}^{I_{T}}(\omega_{s,j}-V_{sj}^{\tau}\mathbf{\breve{\alpha}}^{\left(s\right)})^2\\
=&\frac{1}{I_{T}}\sum_{j=1}^{I_{T}}\big\{\sum_{k=1}^{p}\big[\beta_{k}^{\left(s\right)}(X_{t_{sj},T}^{\left(k\right)})
-\breve{\beta}_{k}^{\left(s\right)}(X_{t_{sj},T}^{\left(k\right)})\big]\big\}^{2}=O\left(\rho_{A}^{2}\right).
\end{aligned}
\end{equation*}
Thus, $|\mathbf{\tilde{\alpha}}^{\left(s\right)}-\mathbf{\breve{\alpha}}^{\left(s\right)}|^2=O\left(\rho_{A}^{2}\right),$ which means
\[\frac{1}{N_{T}^2}\sum_{s=1}^{N_{T}}\parallel\tilde{\beta}_{k}^{\left(s\right)}\left(x_{k}\right)-\breve{\beta}^{\left(s\right)}\left(x_{k}\right)\parallel^{2}_{L_{2}}
=O\left(\rho_{A}^2/N_{T}\right).\]
\textbf{ Stochastic error terms: }  We next show the rate of stochastic errors:
\begin{equation}\label{theorem12}
\parallel\check{\gamma}_{k}-\tilde{\gamma}_{k}\parallel^{2}_{L_{2}}=O_{p}\left(K_{A}/T\right),\ \ \
|\check{\gamma}_{k}\left(0\right)-\tilde{\gamma}_{k}\left(0\right)|^{2}=O_{p}\left(K_{A}/T\right).
\end{equation}
It is easy to see that
\begin{equation*}
\begin{aligned}
\parallel\check{\gamma}_{k}\left(x_{k}\right)-\tilde{\gamma}_{k}\left(x_{k}\right)\parallel^2_{L_{2}}
\preceq&\frac{1}{N_{T}^2}\sum_{s=1}^{N_{T}}\parallel\check{\beta}_{k}^{\left(s\right)}\left(x_{k}\right)
-\tilde{\beta}_{k}^{\left(s\right)}\left(x_{k}\right)\parallel^2_{L_{2}}\\
\leq &\frac{1}{N_{T}^2}\sum_{s=1}^{N_{T}}|\hat{\alpha}^{\left(s\right)}-\tilde{\alpha}^{\left(s\right)}|^2\\
=&\frac{1}{N_{T}^2}\sum_{s=1}^{N_{T}}\big| \big\{\sum_{j=1}^{I_{T}}V_{sj}V_{sj}^{\tau}\big\}^{-1}
\sum_{j=1}^{I_{T}}V_{sj}(Y_{t_{sj},T}-\omega_{s,j})\big|^2\\
\preceq&\frac{1}{N_{T}^2I_{T}^2}\sum_{s=1}^{N_{T}}\big|\sum_{j=1}^{I_{T}}V_{sj}\sigma_{t_{sj}}\varepsilon_{t_{sj}}\big|^2.
\end{aligned}
\end{equation*}
Note that $V_{sj}^{\tau}V_{sj'}=1+\sum_{k=1}^{p}\sum_{l=1}^{J_{k,A}}\Psi_{kl}\big\{X_{t_{sj},T}^{\left(k\right)}\big\}\Psi_{kl}\big\{X_{t_{sj'},T}^{\left(k\right)}\big\}.$
Under Assumption (A5), we obtain that
\begin{equation*}
\begin{aligned}
E\Big|\sum_{j=1}^{I_{T}}V_{sj}\sigma_{t_{sj}}\varepsilon_{t_{sj}}\Big|^2\leq& C_{\sigma}
E\Big\{\sum_{j=1}^{I_{T}}V_{sj}^{\tau}V_{sj}\varepsilon_{t_{sj}}^{2}
+\sum_{\substack{j=1\\j\neq j'}}^{I_{T}}V_{sj}^{\tau}V_{sj'}\varepsilon_{t_{sj}}\varepsilon_{t_{sj'}}\Big\}\\
=& C_{\sigma}E\Big\{\sum_{j=1}^{I_{T}}\big[1+\sum_{k=1}^{p}\sum_{l=1}^{J_{k,A}}\psi_{kl}^2(X_{t_{sj},T}^{\left(k\right)})\big]
\varepsilon_{t_{sj}}^{2}\Big\}\\
&+C_{\sigma}E\Big\{\sum_{\substack{j=1\\j\neq j'}}^{I_{T}}\big[1+\sum_{k=1}^{p}\sum_{l=1}^{J_{k,A}}\psi_{kl}(X_{t_{sj},T}^{\left(k\right)})
\psi_{kl}(X_{t_{sj'},T}^{\left(k\right)})\big]\varepsilon_{t_{sj}}\varepsilon_{t_{sj'}}\Big\}.
\end{aligned}
\end{equation*}
Assumption (A3) makes the second term be zero, and the first term is bounded by
$I_{T}+\sum_{j=1}^{I_{T}}\sum_{k=1}^{p}\sum_{l=1}^{J_{k,A}}E[\psi_{kl}(X_{t_{sj},T}^{\left(k\right)})]^{2}.$
However,
\begin{equation*}
\begin{aligned}
\sum_{k=1}^{p}\sum_{l=1}^{J_{k,A}}E[\psi_{kl}(X_{t_{sj},T}^{\left(k\right)})]^{2}
\preceq & \sum_{k=1}^{p}\sum_{l=1}^{J_{k,A}}E\big[\psi_{kl}(X_{t_{sj},T}^{\left(k\right)})
-\psi_{kl}(X_{t_{sj}}^{\left(k\right)}\left(t_{sj}/T\right))\big]^{2}\\
&+\sum_{k=1}^{p}\sum_{l=1}^{J_{k,A}}E[\psi_{kl}(X_{t_{sj}}^{\left(k\right)}\left(t_{sj}/T\right))]^{2}.
\end{aligned}
\end{equation*}
By Assumption (A2) and the properties of B-spline,
\begin{equation*}
\sum_{k=1}^{p}\sum_{l=1}^{J_{k,A}}E[\psi_{kl}(X_{t}^{\left(k\right)}\left(t/T\right))]^2
\asymp
\sum_{k=1}^{p}J_{k,A}=O\left(K_{A}\right).
\end{equation*}
On the other hand,
\begin{equation*}
\begin{aligned}
&E[\psi_{kl}(X_{t,T}^{\left(k\right)})-\psi_{kl}(X_{t}^{\left(k\right)}\left(t/T\right))]^{2}
=J_{k,A}E[B_{kl,A}(X_{t,T}^{\left(k\right)})-B_{kl,A}(X_{t}^{\left(k\right)}\left(t/T\right))]^{2}\\
\leq& CJ_{k,A}E[|\mathbf{X}_{t,T}-\mathbf{X}_{t}\left(t/T\right)|^2]
\leq CJ_{k,A}\frac{1}{T^{2}}E[U_{t,T}^{2}\left(t/T\right)].
\end{aligned}
\end{equation*}
Therefore,
\begin{equation*}
\begin{aligned}
\sum_{j=1}^{I_{T}}\sum_{k=1}^{p}\sum_{l=1}^{J_{k,A}}E[\psi_{kl}(X_{t_{sj},T}^{\left(k\right)})]^{2}
=O\left(K_{A}I_{T}\right)+\frac{I_{T}}{T^{2}}\sum_{k=1}^{p}J_{k,A}^{2}.
\end{aligned}
\end{equation*}
and in turn
\begin{equation*}
\begin{aligned}
\frac{1}{T^{2}}\sum_{s=1}^{N_{T}}\big|\sum_{j=1}^{I_{T}}V_{sj}\sigma_{t_{sj}}\varepsilon_{t_{sj}}\big|^2
=O_p\left(\frac{K_{A}}{T}+\frac{K_{A}^{2}}{T^{3}}\right)
=O_p\left(\frac{K_{A}}{T}\right),
\end{aligned}
\end{equation*}
which completes the proof of \eqref{theorem12} and hence the first half of Theorem 1. The rest is direct from Lemma \ref{eigenvalue}.
\qed\\

 \textbf{Proof for Theorem \ref{alpha}.} Let
$m\left(u,\mathbf{x}\right)=\alpha_{0}\left(u\right)+\sum_{k=1}^{p}\delta_{k}\left(u\right)\gamma_{k}\left(x_{k}\right),$
$\hat{m}\left(u,\mathbf{x}\right)=\alpha_{0}\left(u\right)+\sum_{k=1}^{p}\delta_{k}\left(u\right)\hat{\gamma}_{k}\left(x_{k}\right),$
where $\mathbf{x}=\left(x_{1},\cdots,x_{p}\right)^{\tau}.$ Denote
$m_{t}=m\left(\frac{t}{T},\mathbf{X}_{t,T}\right),$  $\hat{m}_{t}=\hat{m}\left(\frac{t}{T},\mathbf{X}_{t,T}\right).$

Denote $\widetilde{D}_{t}=\{\hat{\Gamma}\left(\mathbf{X}_{t,T}\right)^{\tau}\mathbf{\Phi}\left(t/T\right)\}^{\tau},$ where
$\mathbf{\Phi}\left(\cdot\right)=diag\left( \Phi_{0}\left(\cdot\right)^{\tau},\cdots,\Phi_{p}\left(\cdot\right)^{\tau}\right),$
 $\Phi_{k}(\cdot)=\{\varphi_{k1}(\cdot),\cdots,\varphi_{kJ_{k,C}}(\cdot)\}^{\tau},$ and
 $\hat{\Gamma}(\mathbf{X}_{t,T})=\{1,\hat{\gamma}_{1}(X_{t,T}^{\left(1\right)}),\cdots,\hat{\gamma}_{p}
(X_{t,T}^{\left(p\right)})\}^\tau.$
 Then $\mathbf{\hat{g}}=\big\{\sum\limits_{t=1}^{T}\widetilde{D}_{t}\widetilde{D}_{t}^{\tau}\big\}^{-1}\sum\limits_{t=1}^{T}\widetilde{D}_{t}Y_{t,T}.$

Furthermore, assuming that $\mathbf{\tilde{g}}=\left(\tilde{g}_{0}^{\tau},\cdots,\tilde{g}_{p}^{\tau}\right)^{\tau}$
with $\tilde{g}_{k}=\left(g_{k1},\cdots,g_{kJ_{k,A}}\right)^{\tau}$
is given by
\[\mathbf{\tilde{g}}=\big\{\sum_{t=1}^{T}\widetilde{D}_{t}\widetilde{D}_{t}^{\tau}\big\}^{-1}\sum_{t=1}^{T}\widetilde{D}_{t}m_{t}\]
and $\tilde{\alpha}_{k}\left(u\right)=\Phi_{k}\left(u\right)^{\tau}\tilde{g}_{k}$ for $k=0,\cdots,p.$

By Cauchy-Schwartz  inequality and identifiable condition $\parallel\alpha_{k}\parallel_{L_{2}}=1,$ we get
\begin{equation*}
\begin{aligned}
\parallel \hat{\alpha}_{k}-\alpha_{k}\parallel^{2}_{L_{2}}=&\parallel \hat{\delta}_{k}/\parallel\hat{\delta}_{k}\parallel_{L_{2}}-\alpha_{k}\parallel^2_{L_{2}}\\
\leq& 2(1-\parallel\hat{\delta}_{k}\parallel_{L_{2}})^{2}+2\parallel\hat{\delta}_{k}-\alpha_{k}\parallel^{2}_{L_{2}}\\
\leq&4\parallel\hat{\delta}_{k}-\alpha_{k}\parallel^{2}_{L_{2}}\\
\leq&4 \parallel\hat{\delta}_{k}-\tilde{\alpha}_{k}\parallel^{2}_{L_{2}}
+4\parallel\tilde{\alpha}_{k}-\alpha_{k}\parallel^{2}_{L_{2}}.
\end{aligned}
\end{equation*}
\textbf{ Approximate error term: } We  show the rate of approximate error term as follows
\begin{equation}\label{theorem21}
\parallel\tilde{\alpha}_{k}-\alpha_{k}\parallel^{2}_{L_{2}}=O\left(\frac{\rho_{A}^{2}}{N_{T}}+\frac{K_{A}}{T}+\rho_{C}^{2}\right).
\end{equation}
By the definition of $\rho_{C},$ there exists $\mathbf{\breve{g}}=\left(\breve{g}_{0},\cdots,\breve{g}_{p}\right)$ such that $\breve{\alpha}_{k}\left(u\right)=\Phi_{k}\left(u\right)^{\tau}\breve{g}_{k}$
satisfying \[\sup_{u\in[0,1]}|\breve{\alpha}_{k}\left(u\right)-\alpha_{k}\left(u\right)|=O\left(\rho_{C}\right)\]
for $k=0,\cdots,p.$
Thus $\parallel\breve{\alpha}_{k}-\alpha_{k}\parallel^{2}_{L_{2}}=O\left(\rho_{C}^{2}\right).$

Note that $\parallel\tilde{\alpha}_{k}-\breve{\alpha}_{k}\parallel^{2}_{L_{2}}\asymp |\mathbf{\tilde{g}}-\mathbf{\breve{g}}|^{2}$
and the normal equation $\sum_{t=1}^{T}\widetilde{D}_{t}\big\{m_{t}-\widetilde{D}_{t}^{\tau}\mathbf{\tilde{g}}\big\}=0$ yields
\begin{equation*}
\begin{aligned}
|\mathbf{\tilde{g}}-\mathbf{\breve{g}}|^{2}\asymp &
\frac{1}{T}\sum_{t=1}^{T}|\widetilde{D}_{t}^{\tau}\left(\mathbf{\tilde{g}}-\mathbf{\breve{g}}\right)|^{2}
\asymp&\frac{1}{T}\sum_{t=1}^{T}| m_{t}-\hat{m}_{t}|^{2}
+\frac{1}{T}\sum_{t=1}^{T}|\hat{m}_{t}-\widetilde{D}_{t}^{\tau}\mathbf{\breve{g}}|^{2}.
\end{aligned}
\end{equation*}

According to  Proposition 1 and boundness of $\delta_{k},$
\begin{equation*}
\begin{aligned}
\frac{1}{T}\sum_{t=1}^{T}|m_{t}-\hat{m}_{t}|^{2}
\preceq \frac{1}{T}\sum_{t=1}^{T}\sum_{k=1}^{p}[\gamma_{k}(X_{t,T}^{\left(k\right)})
-\hat{\gamma}_{k}(X_{t,T}^{\left(k\right)})]^{2}
=O_{p}\left(\frac{\rho_{A}^{2}}{N_{T}}+\frac{K_{A}}{T}\right).
\end{aligned}
\end{equation*}
 On the other hand, from Assumption (A7)
\begin{equation*}
\begin{aligned}
&\frac{1}{T}\sum_{t=1}^{T}|\hat{m}_{t}-\widetilde{D}_{t}^{\tau}\mathbf{\breve{g}}|^{2}\\ \preceq&
\frac{1}{T}\sum_{t=1}^{T}[\alpha_{0}\left(t/T\right)-\breve{\alpha}_{0}\left(t/T\right)]^{2}
+\frac{1}{T}\sum_{t=1}^{T}\sum_{k=1}^{p}\hat{\gamma}_{k}^{2}(X_{t,T}^{\left(k\right)})
[\delta_{k}\left(t/T\right)-\breve{\delta}_{k}\left(t/T\right)]^{2}\\
\preceq&\rho_{C}^{2}+\rho_{C}^{2}\frac{1}{T}\sum_{k=1}^{p}\sum_{t=1}^{T}\hat{\gamma}_{k}^{2}(X_{t,T}^{\left(k\right)})\\
=&O\left(\rho_{C}^{2}\right).
\end{aligned}
\end{equation*}

\textbf {Stochastic Error terms:} We next show the following rate of stochastic error term
\begin{equation}\label{theorem22}
\parallel\hat{\delta}_{k}-\tilde{\alpha}_{k}\parallel^{2}_{L_{2}}=O_p\left(K_{C}/T\right).
\end{equation}
It is easy to see \[\parallel\hat{\delta}_{k}-\tilde{\alpha}_{k}\parallel^{2}_{L_{2}}=
\parallel\Phi_{k}\left(u\right)^{\tau}\left(\hat{g}_{k}-\tilde{g}_{k}\right)\parallel^{2}_{L_{2}}\asymp
|\hat{g}_{k}-\tilde{g}_{k}|^{2}\leq|\mathbf{\hat{g}}-\mathbf{\tilde{g}}|^{2},\]
and
\begin{equation*}
\begin{aligned}
\mathbf{\hat{g}}-\mathbf{\tilde{g}}=&\big\{\sum_{t=1}^{T}\widetilde{D}_{t}\widetilde{D}_{t}^{\tau}\big\}^{-1}
\sum_{t=1}^{T}\widetilde{D}_{t}\left(Y_{t}-m_{t}\right)\\
=&\big\{\sum_{t=1}^{T}\widetilde{D}_{t}\widetilde{D}_{t}^{\tau}\big\}^{-1}
\sum_{t=1}^{T}\widetilde{D}_{t}\sigma(t/T,\mathbf{X}_{t,T})                                                                                                                                                                                                                     \varepsilon_{t}.
\end{aligned}
\end{equation*}
Based on  Assumption (A5), it is sufficient to bound
$E|T^{-1}\sum_{t=1}^{T}\widetilde{D}_{t}\varepsilon_{t}|^{2}.$

Let $\Gamma\left(\mathbf{X}_{t,T}\right)=\{1,\gamma_{1}\left(X_{1t}\right),\cdots,
\gamma_{p}\left(X_{pt}\right)\}^{\tau}$ and
$D_{t}=\{\Gamma\left(\mathbf{X}_{t,T}\right)^{\tau}\mathbf{\Phi}\left(t/T\right)\}^{\tau},$
then \[|T^{-1}\sum_{t=1}^{T}\widetilde{D}_{t}\varepsilon_{t}|^{2}
\leq2|T^{-1}\sum_{t=1}^{T}(\widetilde{D}_{t}-D_{t})\varepsilon_{t}|^{2}+2|T^{-1}\sum_{t=1}^{T}D_{t}\varepsilon_{t}|^{2}.\]
Obviously, under Assumption (A3),
$E\big|T^{-1}\sum_{t=1}^{T}D_{t}\varepsilon_{t}\big|^{2}
=\frac{1}{T^{2}}\sum_{t=1}^{T}E[D_{t}^{\tau}D_{t}].
$
From Assumption (A1) and Assumption (A7),
\begin{equation*}
\begin{aligned}
E[\gamma_{k}(X_{t,T}^{\left(k\right)})]^{2}
\preceq& E[\gamma_{k}(X_{t,T}^{\left(k\right)})-\gamma_{k}(X_{t}^{\left(k\right)}(t/T))]^{2}
+E[\gamma_{k}(X_{t}^{\left(k\right)}(t/T))]^{2}\\
\preceq & E|\mathbf{X}_{t,T}-\mathbf{X}_{t}\left(t/T\right)|^{2}+O\left(1\right)\\
=&O\left(T^{-2}\right)+O\left(1\right)=O\left(1\right),
\end{aligned}
\end{equation*}
where $X_{t}^{\left(k\right)}\left(u\right)$ is the $k$-th component of stationary approximation process $\{\mathbf{X}_{t}\left(u\right)\}$ of
locally stationary process $\{\mathbf{X}_{t,T}\}$ at rescaled time $u.$
In combination with $D_{t}^{\tau}D_{t}=\sum_{l=1}^{J_{0,C}}\varphi_{0l}^{2}(t/T)
+\sum_{k=1}^{p}\sum_{l=1}^{J_{k,C}}\varphi_{kl}^{2}(t/T)\gamma^{2}_{k}(X_{t,T}^{\left(k\right)}),$ we have
\begin{equation*}
\begin{aligned}
E[D_{t}^{\tau}D_{t}]\preceq &\sum_{l=1}^{J_{0,C}}\varphi_{0l}^{2}\left(t/T\right)+\sum_{k=1}^{p}\sum_{l=1}^{J_{k,C}}\varphi_{kl}^{2}\left(t/T\right)
=O\left(K_{C}\right),
\end{aligned}
\end{equation*}
which means $|T^{-1}\sum_{t=1}^{T}D_{t}\varepsilon_{t}|^{2}=O_p\left(K_{C}/T\right).$
Meantime,
\begin{equation*}
\begin{aligned}
&E|T^{-1}(\widetilde{D}_{t}-D_{t})\varepsilon_{t}|^{2}
=&\frac{1}{T^{2}}\sum_{t=1}^{T}\sum_{k=1}^{p}\sum_{l=1}^{J_{k,C}}\varphi_{kl}^{2}\left(t/T\right)
E[\hat{\gamma}_{k}(X_{t,T}^{\left(k\right)})-\gamma_{k}(X_{t,T}^{\left(k\right)})]^{2}.
\end{aligned}
\end{equation*}
By Cauchy-Schwartz inequality,
\begin{equation*}
\begin{aligned}
&E[\hat{\gamma}_{k}(X_{t,T}^{\left(k\right)})-\gamma_{k}(X_{t,T}^{\left(k\right)})]^{2}
\leq 3 E[\hat{\gamma}_{k}(X_{t,T}^{\left(k\right)})-\hat{\gamma}_{k}(X_{t}^{\left(k\right)}\left(t/T\right))]^{2}\\
&+3E[\hat{\gamma}_{k}(X_{t}^{\left(k\right)}\left(t/T\right))-\gamma_{k}(X_{t}^{\left(k\right)}\left(t/T\right))]^{2}
+3E[\gamma_{k}(X_{t}^{\left(k\right)}\left(t/T\right))-\gamma_{k}\big(X_{t,T}^{\left(k\right)})]^{2}.
\end{aligned}
\end{equation*}
The third term is bounded by $O\left(T^{-2}\right)$ from Assumption (A1) and (A7).
 Assumption (A2) and Proposition 1 ensure that the second term is bounded by
 $O_{p}\left(\rho_{A}^{2}/N_{T}+K_{A}/T\right).$ For the first term,  we note that
\begin{equation*}
\begin{aligned}
&E[\hat{\gamma}_{k}(X_{t,T}^{\left(k\right)})-\hat{\gamma}_{k}(X_{t}^{\left(k\right)}\left(t/T\right))]^{2}\\
=&E\big[N_{T}^{-1}\sum_{s=1}^{N_{T}}\sum_{l=1}^{J_{k,A}}\hat{h}^{\left(s\right)}_{kl}J_{k,A}^{1/2}
\{B_{kl,A}(X_{t,T}^{\left(k\right)})-B_{kl,A}(X_{t}^{\left(k\right)}\left(t/T\right))\}\big]^{2}\\
\preceq &J_{k,A}N_{T}^{-2}\sum_{s=1}^{N_{T}}\sum_{l=1}^{J_{k,A}}(\hat{h}^{\left(s\right)}_{kl})^{2}
|\mathbf{X}_{t,T}-\mathbf{X}_{t}\left(t/T\right)|^{2}\\
=&O_p\left(\frac{K_{A}^{2}}{T^{2}N_{T}}\right),
\end{aligned}
\end{equation*}
and thus
\begin{equation*}
\begin{aligned}
&E|T^{-1}\sum_{t=1}^{T}(\widetilde{D}_{t}-D_{t})\varepsilon_{t}|^{2}=\frac{1}{T^{2}}
\sum_{t=1}^{T}\sum_{k=1}^{p}\sum_{l=1}^{J_{k,C}}\varphi_{kl}^{2}\left(t/T\right)
O_{p}\left(\frac{\rho_{A}^{2}}{N_{T}}+\frac{K_{A}}{T}\right)\\
\leq &\frac{1}{T^{2}}\sum_{t=1}^{T}\sum_{k=1}^{p}\big[\sum_{l=1}^{J_{k,C}}\varphi_{kl}\left(t/T\right)\big]^{2}
O_{p}\left(\frac{\rho_{A}^{2}}{N_{T}}+\frac{K_{A}}{T}\right)
=O_p\left(\frac{K_{C}}{T}\Big\{\frac{\rho_{A}^{2}}{N_{T}}+\frac{K_{A}}{T}\Big\}\right),
\end{aligned}
\end{equation*}
which shows \eqref{theorem22}.
\qed\\

\textbf{Proof for Theorem \ref{beta}.}
Let $\omega\left(u,\mathbf{x}\right)=\alpha_{0}\left(u\right)+\sum_{k=1}^{p}\alpha_{k}\left(u\right)\beta_{k}\left(x_{k}\right),$
$\omega_{t}=\omega\left(t/T,\mathbf{X}_{t,T}\right),$ and
$\hat{\omega}\left(u,\mathbf{x}\right)=\hat{\alpha}_{0}\left(u\right)+\sum_{k=1}^{p}\hat{\alpha}_{k}\left(u\right)\beta_{k}\left(x_{k}\right),$
$\hat{\omega}_{t}=\hat{\omega}\left(t/T,\mathbf{X}_{t,T}\right).$
Define $\widetilde{Z}_{t}=\{\hat{\Delta}\left(t/T\right)^{\tau}\mathbf{\Psi}\left(\mathbf{X}_{t,T}\right)\}^{\tau},$ where
$\hat{\Delta}\left(\cdot\right)=\{\hat{\alpha}_{1}\left(\cdot\right),\cdots,\hat{\alpha}_{p}\left(\cdot\right)\}^{\tau},$
$\mathbf{\Psi}\left(\cdot\right)=diag\left(\Psi_{1}\left(\cdot\right)^{\tau},\cdots,\Psi_{p}\left(\cdot\right)^{\tau}\right)$
and
$\Psi_{k}\left(\cdot\right)=\{\psi_{k1}\left(\cdot\right),\cdots,\psi_{kJ_{k,A}}\left(\cdot\right)\}^{\tau}$ for $ k=1,\cdots,p.$
Suppose $\mathbf{f}=\left(f_{1}^{\tau},\cdots,f_{p}^{\tau}\right)^{\tau}$ is given by
\[\mathbf{\tilde{f}}=\big\{\sum_{t=1}^{T}\widetilde{Z}_{t}\widetilde{Z}_{t}^{\tau}\big\}^{-1}\sum_{t=1}^{T}\widetilde{Z}_{t}\omega_{t},\]
and $\tilde{\beta}_{k}\left(x_{k}\right)=\Psi_{k}\left(x_{k}\right)^{\tau}\tilde{f}_{k}.$
Analogously,  represent  $Z_{t}=\{\Delta\left(t/T\right)^{\tau}\mathbf{\Psi}\left(\mathbf{X}_{t,T}\right)\}^{\tau}$ with
$\Delta\left(\cdot\right)=\{\alpha_{1}\left(\cdot\right),\cdots,\alpha_{p}\left(\cdot\right)\}^{\tau}.$
Obviously,
\begin{equation*}
\begin{aligned}
\parallel\hat{\beta}_{k}-\beta_{k}\parallel^{2}_{L_{2}}\leq &
4\parallel\check{\beta}_{k}-\tilde{\beta}_{k}\parallel^{2}_{L_{2}}
+4\parallel\tilde{\beta}_{k}-\beta_{k}\parallel^{2}_{L_{2}}
+4|\check{\beta}_{k}\left(0\right)-\tilde{\beta}_{k}\left(0\right)|^{2}\\
&+4|\tilde{\beta}_{k}\left(0\right)-\beta_{k}\left(0\right)|^{2}.
\end{aligned}
\end{equation*}
{\bf Approximation Error Term: } The rate of approximation error term is given by
\begin{equation}\label{theorem1}
\begin{aligned}
\parallel\tilde{\beta}_{k}-\beta_{k}\parallel^{2}_{L_{2}}
=&O_p\left(\rho_{A}^{2}+\rho_{C}^{2}+\frac{K_{A}\vee K_{C} }{T}\right),\\
|\tilde{\beta}_{k}\left(0\right)-\beta_{k}\left(0\right)|^{2}
=&O_p\left(\rho_{A}^{2}+\rho_{C}^{2}+\frac{K_{A}\vee K_{C} }{T}\right).
\end{aligned}
\end{equation}
On the one hand, by the definition of $\rho_{A},$ there exists $\mathbf{f}^*=\left(f^*_{1},\cdots,f^*_{p}\right)^{\tau}$ and $\beta^*_{k}\left(x_{k}\right)=\Psi_{k}\left(x_{k}\right)^{\tau}f^*_{k}$
such that \[\sup_{x_{k}\in[0,1]}|\beta^*\left(x_{k}\right)-\beta_{k}\left(x_{k}\right)|=O\left(\rho_{A}\right),\]
which means $\parallel\beta^*_{k}-\beta_{k}\parallel^{2}_{L_{2}}=O\left(\rho_{A}^{2}\right).$
On the other hand,
\begin{equation*}
\parallel\tilde{\beta}_{k}-\beta^{*}_{k}\parallel_{L_{2}}^{2}
=\parallel\Psi_{k}\left(x_{k}\right)^{\tau}(\tilde{f}_{k}-f^{*}_{k})\parallel^{2}_{L_{2}}
\asymp |\tilde{f}_{k}-f^*_{k}|^{2}\preceq |\mathbf{\tilde{f}}-\mathbf{f}^*|^{2}.
\end{equation*}
Furthermore,
\begin{equation*}
\begin{aligned}
|\mathbf{\tilde{f}}-\mathbf{f}^*|^{2}\asymp&\frac{1}{T}\sum_{t=1}^{T}|\widetilde{Z}_{t}^{\tau}(\mathbf{\tilde{f}}-\mathbf{f}^*)|^{2}\leq
\frac{1}{T}\sum_{t=1}^{T}|\omega_{t}-\widetilde{Z}_{t}^{\tau}\mathbf{f}^*|^{2}\\
\leq&\frac{2}{T}\sum_{t=1}^{T}|\omega_{t}-\hat{\omega}_{t}|^{2}
+\frac{2}{T}\sum_{t=1}^{T}|\hat{\omega}_{t}-\widetilde{Z}_{t}^{\tau}\mathbf{f}^*|^{2}
\end{aligned}
\end{equation*}
since $\sum_{t=1}^{T}\widetilde{Z}_{t}(\omega_{t}-\widetilde{Z}_{t}^{\tau}\tilde{f})=0.$
According to Theorem 1,
\begin{equation*}
\begin{aligned}
&\frac{1}{T}\sum_{t=1}^{T}|\omega_{t}-\hat{\omega}_{t}|^{2}\\ \preceq&
\frac{1}{T}\sum_{t=1}^{T}[\alpha_{0}\left(t/T\right)-\hat{\alpha}_{0}\left(t/T\right)]^{2}+
\frac{1}{T}\sum_{k=1}^{p}[\alpha_{k}\left(t/T\right)-\hat{\alpha}_{k}\left(t/T\right)]^{2}\beta_{k}^{2}(X_{t,T}^{\left(k\right)})\\
=&O_{p}\left(\frac{\rho_{A}^{2}}{N_{T}}+\rho_{C}^{2}+\frac{K_{A}\vee K_{C} }{T}\right).
\end{aligned}
\end{equation*}
Finally, we note that
\begin{equation*}
\begin{aligned}
&\frac{1}{T}\sum_{t=1}^{T}|\hat{\omega}_{t}-\widetilde{Z}_{t}^{\tau}\mathbf{f}^*|^{2}
=\frac{1}{T}\sum_{t=1}^{T}\big|\sum_{k=1}^{p} \hat{\alpha}_{k}\left(t/T\right)
[\beta_{k}(X_{t,T}^{\left(k\right)})-\beta^*_{k}(X_{t,T}^{\left(k\right)})]\big|^{2}\\
\preceq&\frac{1}{T}\sum_{t=1}^{T}\sum_{k=1}^{p} \hat{\alpha}_{k}^{2}\left(t/T\right)[\beta_{k}(X_{t,T}^{\left(k\right)})
-\beta^*_{k}(X_{t,T}^{\left(k\right)})]^{2}
=O\left(\rho_{A}^{2}\right)
\end{aligned}
\end{equation*}
since for each $k=1,\cdots,p,$
\begin{equation*}
\begin{aligned}
\frac{1}{T}\sum_{t=1}^{T}\hat{\alpha}_{k}^{2}\left(t/T\right)\preceq&
\frac{1}{T}\sum_{t=1}^{T}[\hat{\alpha}_{k}\left(t/T\right)-\alpha_{k}\left(t/T\right)]^{2}
+\frac{1}{T}\sum_{t=1}^{T}\alpha_{k}^{2}\left(t/T\right)\\
=&\int_{0}^{1}\alpha_{k}^{2}\left(u\right)\mathrm{d}u+o\left(1\right)=O\left(1\right).
\end{aligned}
\end{equation*}
Therefore,  \eqref{theorem1} holds.\\

\textbf {Stochastic Error Term: } We will show the rate of stochastic error term:
\begin{equation}\label{theorem32}
\begin{aligned}
\parallel\check{\beta}_{k}-\tilde{\beta}_{k}\parallel_{L_{2}}
=&O_p\left(\frac{K_{A}}{T}+\frac{K_{A}\rho_{A}^{2}}{TN_{T}}+\frac{K_{A}\rho_{C}^{2}}{T}+\frac{\left(K_{A}\vee K_{C}\right)^2}{T^{2}}\right),\\
|\check{\beta}_{k}\left(0\right)-\tilde{\beta}_{k}\left(0\right)|^{2}
=&O_p\left(\frac{K_{A}}{T}+\frac{K_{A}\rho_{A}^{2}}{TN_{T}}+\frac{K_{A}\rho_{C}^{2}}{T}+\frac{\left(K_{A}\vee K_{C}\right)^2}{T^{2}}\right).
\end{aligned}
\end{equation}
Firstly,
\[\parallel\check{\beta}_{k}-\tilde{\beta}_{k}\parallel^2_{L_{2}}=\parallel\Psi_{k}\left(x_{k}\right)^\tau(\hat{f}_{k}-\tilde{f}_{k})\parallel^2_{L_{2}}
\asymp |\hat{f}_{k}-\tilde{f}_{k}|^2\preceq|\mathbf{\hat{f}}-\mathbf{\tilde{f}}|^2.\]
However,
\begin{equation*}
\begin{aligned}
\mathbf{\hat{f}}-\mathbf{\tilde{f}}=&\big\{\sum_{t=1}^T\widetilde{Z}_{t}\widetilde{Z}_{t}^\tau\big\}^{-1}\sum_{t=1}^T\widetilde{Z}_{t}\left(Y_{t,T}-\omega_{t}\right)
=\big\{\sum_{t=1}^T\widetilde{Z}_{t}\widetilde{Z}_{t}^\tau\big\}^{-1}\sum_{t=1}^T\widetilde{Z}_{t}\sigma\left(t/T,\mathbf{X}_{t,T}\right)\varepsilon_{t},
\end{aligned}
\end{equation*}
which implies \[|\mathbf{\hat{f}}-\mathbf{\tilde{f}}|^2\asymp |T^{-1}\sum_{t=1}^T\widetilde{Z}_{t}\varepsilon_{t}|^2
\preceq \frac{1}{T^2}\sum_{t=1}^T|(\widetilde{Z}_{t}-Z_{t})\varepsilon_{t}|^{2}
+\frac{1}{T^2}\sum_{t=1}^{T}| Z_{t}\varepsilon_{t}|^{2}\]
because of  Assumption (A5). Similar the counterpart in the proof of Theorem 1, we have
\begin{equation*}
\begin{aligned}
E|(\widetilde{Z}_{t}-Z_{t})\varepsilon_t|^2=&E[(\widetilde{Z}_{t}-Z_{t})^\tau(\widetilde{Z}_{t}-Z_{t})]\\
=&\sum_{k=1}^p[\hat{\alpha}_{k}\left(t/T\right)-\alpha_{k}\left(t/T\right)]^{2}\sum_{l=1}^{J_{k,A}}E[\psi_{kl}(X_{t,T}^{\left(k\right)})]^2.
\end{aligned}
\end{equation*}
Furthermore,
\begin{equation*}
\begin{aligned}
\sum_{l=1}^{J_{k,A}}E[\psi_{kl}(X_{t,T}^{\left(k\right)})]^2\preceq \sum_{l=1}^{J_{k,A}}E[\psi_{kl}(X_{t,T}^{\left(k\right)})
-\psi_{kl}(X_{t}^{\left(k\right)}\left(t/T\right))]^{2}
+\sum_{l=1}^{J_{k,A}}E[\psi_{kl}(X_{t}^{\left(k\right)}\left(t/T\right))]^2.
\end{aligned}
\end{equation*}
By Assumption (A1),
\begin{equation*}
\begin{aligned}
E[\psi_{kl}(X_{t,T}^{\left(k\right)})-\psi_{kl}(X_{t}^{\left(k\right)}\left(t/T\right))]^{2}
=&J_{k,A}E[B_{kl,A}(X_{t,T}^{\left(k\right)})-B_{kl,A}(X_{t}^{\left(k\right)}\left(t/T\right))]^{2}\\
\preceq & J_{k,A}E|X_{t,T}^{\left(k\right)}-X_{t}^{\left(k\right)}\left(t/T\right)|^{2}=O\left(J_{k,A}/T^{2}\right)
\end{aligned}
\end{equation*}
Assumption (A2) leads to
\begin{equation*}
\begin{aligned}
\sum_{l=1}^{J_{k,A}}E[\psi_{kl}(X_{t}^{\left(k\right)}\left(t/T\right))]^2
\asymp \sum_{l=1}^{J_{k,A}}\int\psi_{kl}^2\left(z\right)\mathrm{d}z
\leq \int\big[\sum_{l=1}^{J_{k,A}}\psi_{kl}\left(z\right)\big]^{2}\mathrm{d}z
=J_{k,A}.
\end{aligned}
\end{equation*}
Therefore, $\sum_{l=1}^{J_{k,A}}E[\psi_{kl}(X_{t,T}^{\left(k\right)})]^{2}
=O\left(J_{k,A}+J_{k,A}^{2}/T^{2}\right)=O\left(K_{A}+K_{A}^{2}/T^{2}\right),$ which yields
\begin{equation*}
\frac{1}{T^2}\sum_{t=1}^{T}E|(\widetilde{Z}_{t}-Z_{t})\varepsilon_{t}|^{2}
=O\left(\frac{K_{A}}{T}\Big\{\frac{\rho_{A}^{2}}{N_{T}}+\rho_{C}^{2}+\frac{K_{A}\vee K_{C}}{T}\Big\}\right).
\end{equation*}
Similarly,
\begin{equation*}
\begin{aligned}
\frac{1}{T^{2}}\sum_{t=1}^{T}E| Z_{t}\varepsilon_{t}|^{2}=&\frac{1}{T^{2}}\sum_{t=1}^{T}E[Z_{t}^{\tau}Z_{t}]\\
=&\frac{1}{T^{2}}\sum_{t=1}^{T}\Delta\left(t/T\right)^{\tau}
E[\mathbf{\Psi}\left(\mathbf{X}_{t,T}\right)\mathbf{\Psi}\left(\mathbf{X}_{t,T}\right)^{\tau}]\Delta\left(t/T\right).
\end{aligned}
\end{equation*}
Note that
\begin{equation*}
\begin{aligned}
E[\Psi_{k}\left(\mathbf{X}_{t,T}\right)^{\tau}\Psi_{k}\left(\mathbf{X}_{t,T}\right)]
=&E\big[\sum_{l=1}^{J_{k,A}}\psi_{kl}^{2}(X_{t,T}^{\left(k\right)})\big]\leq\int
\big\{\sum_{l=1}^{J_{k,A}}\psi_{kl}\left(z\right)\big\}^{2}f_{X_{t,T}}^{\left(k\right)}
\left(z\right)\mathrm{d}z\\=&J_{k,A},
\end{aligned}
\end{equation*}
where $f_{X_{t,T}}^{\left(k\right)}$ is the marginal density of $k$-th component of $\mathbf{X}_{t,T}.$
So, \[E[\mathbf{\Psi}\left(\mathbf{X}_{t,T}\right)\mathbf{\Psi}\left(\mathbf{X}_{t,T}\right)^{\tau}]=diag\left(J_{k,A}\right)_{k=1}^{p}=K_{A}I_{p}\]
and
\begin{equation*}
\begin{aligned}
\frac{1}{T^{2}}\sum_{t=1}^{T}E|Z_{t}\varepsilon_{t}|^{2}=
K_{A}\frac{1}{T^{2}}\sum_{t=1}^{T}\sum_{k=1}^{p}\alpha_{k}^{2}\left(\frac{t}{T}\right)=O\left(K_{A}/T\right),
\end{aligned}
\end{equation*}
which completes the proof of  \eqref{theorem32}.\qed\\

\textbf {Proof for Theorem \ref{Cadditive}:}
\begin{itemize}
\item[(i)]Without loss of generality, we assume the true model is
\begin{equation*}
\begin{aligned}
Y_{t,T}=&\alpha_{0}(t/T)+\sum_{k=1}^{p_{1}
+p_{2}}\alpha_{k}(t/T)\beta_{k}(X_{t,T}^{\left(k\right)})
+\sum_{k=p_{1}+p_{2}+1}^pc_{k}\beta_{k}(X_{t,T}^{\left(k\right)})\\
&+\sigma(t/T,\mathbf{X}_{t,T})\varepsilon_{t}.
\end{aligned}
\end{equation*}
Let
\begin{equation*}
\begin{aligned}
m_0\left(u\right)=&\alpha_{0}\left(u\right)+\sum_{k=1}^{p_{1}+p_{2}}\alpha_{k}\left(u\right)\hat{\beta}_{k}(X_{t,T}^{\left(k\right)})
+\sum_{k=p_{1}+p_{2}+1}^{p}c_{k}\hat{\beta}_{k}(X_{t,T}^{\left(k\right)}),\\
\end{aligned}
\end{equation*}
and $\mathcal{M}_{T,0}$ as the collection of all functions having form
\begin{equation*}
\pi_{0}^{\tau}\Phi_{0}\left(u\right)+
\sum_{k=1}^{p_{1}+p_{2}}\pi_{k}^{\tau}\Phi_{k}\left(u\right)\hat{\beta}_{k}(X_{t,T}^{\left(k\right)})
+\sum_{k=p_{1}+p_{2}+1}^pc_{k}\hat{\beta}_{k}(X_{t,T}^{\left(k\right)}).
\end{equation*}

It is sufficient to show  $Q_{1}\left(m_{T,0}\right)\leq Q_{1}\left(m_{T,0}+g_{T,1}\right)$
for any $m_{T,0}\in\mathcal{M}_{T,0}$ such that $\parallel m_{T,0}-m_{0}\parallel_{L_{2}}=O\left(\theta_{T}\right)$
and for any \[g_{T,1}\left(u\right)=g\left(u\right)\hat{\beta}_{p_{1}+p_{2}+1}(X_{t,T}^{\left(p_{1}+p_{2}+1\right)})\]
where $g\left(u\right)\in span\{\Phi_{p_{1}+p_{2}+1}\left(u\right)\}$ such that
$\parallel g\parallel_{L_{2}}\leq C\theta_{T}.$

For the sake of convenient presentation, we also
denote $Q_{1}\left(\mathbf{\pi}\right)$ as $Q_{1}\left(\mathbf{m}\right)$ if $\mathbf{m}=\mathbf{D}_{S}\mathbf{\pi}.$
Let \[\mathbf{m}_{T}= \{m_{T,0}\left(1/T\right),\cdots,m_{T,0}\left(T/T\right)\}^{\tau}\] 
and \[\mathbf{g}_{T}= \{g_{T,1}\left(1/T\right),\cdots,g_{T,1}\left(T/T\right)\}^{\tau},\]
then
\begin{equation*}
\begin{aligned}
&Q_{1}\left(\mathbf{m}_{T}\right)-Q_{1}\left(\mathbf{m}_{T}+\mathbf{g}_{T}\right)\\
=&\frac{1}{2}\left( \mathbf{Y}-\mathbf{m}_{T}\right)^{\tau}\left( \mathbf{Y}-\mathbf{m}_{T}\right)
-\frac{1}{2}\left( \mathbf{Y}-\mathbf{m}_{T}-\mathbf{g}_{T}\right)^{\tau}\left( \mathbf{Y}-\mathbf{m}_{T}-\mathbf{g}_{T}\right)\\
&-Tp_{\lambda_{T}}(K_{C}^{-3/2}
\parallel g'\parallel_{L_{2}}).
\end{aligned}
\end{equation*}

Furthermore,
\begin{equation*}
\begin{aligned}
&\left( \mathbf{Y}-\mathbf{m}_{T}\right)^{\tau}\left( \mathbf{Y}-\mathbf{m}_{T}\right)
-\left( \mathbf{Y}-\mathbf{m}_{T}-\mathbf{g}_{T}\right)^{\tau}\left( \mathbf{Y}-\mathbf{m}_{T}-\mathbf{g}_{T}\right)\\
=&\langle \mathbf{g}_{T}, 2\mathbf{Y}-2\mathbf{m}_{T}-\mathbf{g}_{T}\rangle\\
\leq&| \mathbf{g}_{T}|\cdot| 2\mathbf{Y}-2\mathbf{m}_{T}-\mathbf{g}_{T}|,
\end{aligned}
\end{equation*}
where $\langle a,b\rangle$ is the inner product of vector $a$ and $b.$

Let $\mathbf{m}_{0}=\left(m_{0}\left(1/T\right),\cdots,m_{0}\left(T/T\right)\right)^{\tau},$ we have
\begin{equation*}
\begin{aligned}
T^{-1}|2\mathbf{Y}-2\mathbf{m}_{T}-\mathbf{g}_{T}|^{2}
\preceq& T^{-1}|\mathbf{Y}-\mathbf{m}_{0}|^{2}+T^{-1}|\mathbf{m}_{0}-\mathbf{m}_{T}|^{2}+T^{-1}| \mathbf{g}_{T}|^{2}\\
\end{aligned}
\end{equation*}
and
\begin{equation*}
\begin{aligned}
T^{-1}|\mathbf{g}_{T}|^{2}=&\frac{1}{T}\sum_{t=1}^{T}g^{2}\left(t/T\right)\hat{\beta}^{2}_{p_{1}+p_{2}+1}(X_{t,T}^{\left(p_{1}+p_{2}+1\right)})
\preceq\parallel g\parallel_{L_{2}}^{2}+o\left(1\right),
\end{aligned}
\end{equation*}
where the last step holds since Assumption (A7) and Theorem \ref{beta}.

Therefore,
\begin{equation*}
\begin{aligned}
&Q_{1}\left(\mathbf{m}_{T}\right)-Q_{1}\left(\mathbf{m}_{T}+\mathbf{g}_{T}\right)\\
 \preceq&\frac{1}{2}T^2\lambda_{T}\parallel g'\parallel_{L_{2}}
\Big\{\frac{\parallel g\parallel_{L_{2}}}{\lambda_{T}\parallel g'\parallel_{L_{2}}}
\left(1+o\left(1\right)\right)-2K_{C}^{-3/2}\frac{p'_{\lambda_{T}}\left(z\right)}{\lambda_{T}T}\Big\},
\end{aligned}
\end{equation*}
in which $z$ lies between 0 and $K_{C}^{-3/2}\parallel g'\parallel_{L_{2}}$
 and $\parallel g\parallel_{L_{2}}=O\left(\theta_{T}\right).$
The proof of part(i) is completed since $d_{p_{1}+p_{2}+1}\ge 2,$
$\theta_{T}/\lambda_{T}=o_{p}\left(1\right)$ and
$\liminf_{T\to\infty}\liminf_{\theta\to 0+}p'_{\lambda_{T}}\left(\theta\right)/\lambda_{T}>0.$\qed

\item[(ii)]According to Theorem 6  (p149) of \cite{deboor1978practical}, under Assumption (A6), there exists
$\tilde{\pi}_{k} =\left(\tilde{\pi}_{k1},\cdots,\tilde{\pi}_{kJ_{k,C}}\right)^{\tau}$ and  $\tilde{\alpha}_{k}=\tilde{\pi}_{k}^{\tau}\Phi_{k}$ such that
$\parallel\alpha_{k}-\tilde{\alpha}_{k}\parallel_{L_{2}}=O\left(K_{k,C}^{-d_{k}}\right).$
Let $\mathbf{\tilde{\pi}}=\left(\tilde{\pi}_{0}^{\tau},\cdots,\tilde{\pi}_{p}^{\tau}\right)^{\tau}$
and $\mathbf{u}=\left(u_{0}^{\tau},\cdots,u_{p}^{\tau}\right)^{\tau}\in\mathbb{R}^{N_{p}}$ with $N_{p}=\sum_{k=0}^{p}J_{k,C}.$
Next, we will show that for any given $\varepsilon>0,$
there is a sufficiently large $C$ such that
\begin{equation}\label{ball}
P\left(\inf_{|\mathbf{u}|=C}Q_{1}\left(\tilde{\mathbf{\pi}}
+\theta_{T}\mathbf{u}\right)>Q_{1}\left(\tilde{\mathbf{\pi}}\right)\right)\ge 1-\varepsilon.
\end{equation}

According to Lemma 3 of  \cite{Huang2010}, $W_{k}$ has eigenvalues bounded away from 0 and $\infty$ with probability tending to one
as $T\to\infty.$ Therefore,
\begin{equation*}
\begin{aligned}
&D_{T}\left(\mathbf{u}\right)=Q_{1}\left(\mathbf{\tilde{\pi}}+\theta_{T}\mathbf{u}\right)-Q_{1}\left(\mathbf{\tilde{\pi}}\right)\\
\ge &\frac{1}{2}\theta_{T}^{2}\mathbf{u}^{\tau}\mathbf{D}_{S}^{\tau}\mathbf{D}_{S}\mathbf{u}-\theta_{T}\mathbf{u}^{\tau}\mathbf{D}_{S}^{\tau}
\left(Y-\mathbf{D}_{S}\tilde{\mathbf{\pi}}\right)
-cT\lambda_{T}\sum_{k=1}^{p}|\theta_{T}u_{k}|,\\
\ge &\frac{1}{2}\theta_{T}^{2}\mathbf{u}^{\tau}\mathbf{D}_{S}^{\tau}\mathbf{D}_{S}\mathbf{u}-\theta_{T}\mathbf{u}^{\tau}\mathbf{D}_{S}^{\tau}
\left(Y-\mathbf{D}_{S}\tilde{\mathbf{\pi}}\right)-cT\lambda_{T}\theta_{T}\sqrt{p}|\mathbf{u}|,\\
\end{aligned}
\end{equation*}
where we use the fact that $p_{\lambda}\left(0\right)=0$ and $|p_{\lambda}\left(s\right)-p_{\lambda}\left(t\right)|\leq \lambda\left(s-t\right)$ for $s,t>0.$
Notice that the first term
$\frac{1}{2}\theta_{T}^{2}\mathbf{u}^{\tau}\mathbf{D}_{S}^{\tau}\mathbf{D}_{S}\mathbf{u}=O_{p}\left(T\theta_{T}^{2}|\mathbf{u}|^{2}\right).$
We also may choose  the sufficiently large $C$ such that the third term can be dominated by the first term uniformly on $|\mathbf{u}|=C.$
Finally, we observe that the $t$-th element of $\mathbf{Y}-\mathbf{D}_{S}\tilde{\mathbf{\pi}}$ is given by
\begin{equation*}
\begin{aligned}
\left(\mathbf{Y}-\mathbf{D}_{S}\mathbf{\tilde{\pi}}\right)_{t}
=&\alpha_{0}\left(t/T\right)-\tilde{\alpha}_{0}\left(t/T\right)+\sum_{k=1}^{p}\alpha_{k}\left(t/T\right)\beta_{k}(X_{t,T}^{\left(k\right)})\\
&-\sum_{k=1}^{p}\tilde{\alpha}_{k}\left(t/T\right)\hat{\beta}_{k}(X_{t,T}^{\left(k\right)})+\sigma\left(t/T,\mathbf{X}_{t,T}\right)\varepsilon_{t},
\end{aligned}
\end{equation*}
which is bounded by $O_p\left(1\right).$ Thus, the second is bounded by $T\theta_{T}|\mathbf{u}|,$ which is also dominated by the first term.
In combination with the nonnegativity of the first term, we show \eqref{ball}, which implies with probability at least $1-\varepsilon$ that
there exists a local minimizer in the ball
$\{\tilde{\mathbf{\pi}}+\theta_{T}\mathbf{u}:|\mathbf{u}|\leq C\},$  i.e., $|\hat{\mathbf{\pi}}-\tilde{\pi}|^2=O_{p}\left(\theta_{T}^{2}\right).$
Again by the property of B-spline, we have that
$\parallel\hat{\mathbf{\pi}}^{\tau}\Phi_{k}-\tilde{\mathbf{\pi}}^{\tau}\Phi_{k}\parallel_{L_{2}}^2=O\left(|\hat{\mathbf{\pi}}-\tilde{\mathbf{\pi}}|^{2}\right)
=O_{p}\left(\theta_{T}^{2}\right).$ The proof is finished  in combination with
 $\parallel\alpha_{k}-\tilde{\alpha}_{k}\parallel_{L_{2}}=O\left(K_{k,C}^{-d_{k}}\right).$\qed
\end{itemize}
The proofs for Theorem \ref{Cvarying}   is very similar to Theorem \ref{Cadditive},  and thus omitted here.

\section*{Reference}



\end{document}